\newtheorem{theorem}{Theorem}[section]
\newtheorem{definition}{Definition}[section]
\newtheorem{prop}[theorem]{Proposition}
\newtheorem{lemma}[theorem]{Lemma}
\newenvironment{customthm}[1]
  {\innercustomthm}
  {\endinnercustomthm}
\newcommand{\Bdot}{\mathring{B}}
\newcommand{\Hdot}{\mathring{H}}
\newcommand{\Cdot}{\mathring{C}}
\newcommand{\diam}{\mathrm{diam}}
\newcommand{\Lip}{\operatorname{Lip}}
\newcommand{\LL}{\operatorname{LL}}
\newcommand{\BMO}{\operatorname{BMO}}
\newcommand{\QG}{\operatorname{QG}}
\newcommand{\half}{(-\overline{\Delta})^\frac{1}{2}}
\newcommand{\quarter}{(-\overline{\Delta})^\frac{1}{4}}
\newcommand{\fractional}{(-\overline{\Delta})^\alpha}
\newcommand{\eighth}{(-\overline{\Delta})^\frac{1}{8}}
\newcommand{\grad}{\overline{\nabla}}
\newcommand{\lap}{\overline{\Delta}}
\newcommand{\Dbar}{\overline{D}}
\newcommand{\dnu}{\partial_\nu \Psi}
\newcommand{\logplus}{\log_+}
\def\Xint#1{\mathchoice
{\XXint\displaystyle\textstyle{#1}}%
{\XXint\textstyle\scriptstyle{#1}}%
{\XXint\scriptstyle\scriptscriptstyle{#1}}%
{\XXint\scriptscriptstyle\scriptscriptstyle{#1}}%
\!\int}
\def\XXint#1#2#3{{\setbox0=\hbox{$#1{#2#3}{\int}$ }
\vcenter{\hbox{$#2#3$ }}\kern-.6\wd0}}
\def\dashint{\Xint-}
\newcommand{\dive}{\operatorname{div}}
\newcommand{\curl}{\operatorname{curl}}
\newcommand{\dist}{\mathrm{dist}}
\newcommand{\supp}{\operatorname{supp}}
\numberwithin{equation}{section}
\title[Global in Time Classical Solutions to the 3D Quasi-geostrophic System for Large Initial Data]{Global in Time Classical Solutions to the 3D Quasi-geostrophic System for Large Initial Data}
\author[Novack]{Matthew D. Novack}
\address[Matthew D. Novack]{\newline Department of Mathematics, \newline The University of Texas at Austin, Austin, TX 78712, USA}
\email{mnovack@math.utexas.edu}
\author[Vasseur]{Alexis F. Vasseur}
\address[Alexis F. Vasseur]{\newline Department of Mathematics, \newline The University of Texas at Austin, Austin, TX 78712, USA}
\email{vasseur@math.utexas.edu}
\date{\today}
\subjclass[2010]{76B03,35B65,35Q35} \keywords{Quasigeostrophic equation, classical solution, global solution, De Giorgi method}
\thanks{\textbf{Acknowledgment.} A. Vasseur was partially supported by the NSF Grant DMS 1209420.}
\begin{document}

\begin{abstract} 
In this paper, the authors show the existence of global in time classical
solutions to the 3D quasi-geostrophic system with Ekman pumping for any
smooth initial value (possibly large). This system couples an inviscid transport equation in $\mathbb{R}^3_+$ with an equation on the boundary satisfied by the trace. The proof combines the De Giorgi regularization effect on the boundary $z=0$ --similar to the so called surface quasi-geostrophic equation-- with Beale-Kato-Majda techniques to propagate regularity for $z>0$.
A bootstrapping argument combining potential theory and Littlewood-Paley techniques is used to strengthen the regularization effect on the trace up to the Besov space $\Bdot_{\infty,\infty}^1$.

\end{abstract}

\maketitle \centerline{\date}

\section{Introduction}
We consider the 3D quasi-geostrophic system $(\QG)$, which can be stated as the following set of equations imposed upon the stream function $\Psi: [0,\infty) \times \mathbb{R}_+^3 \rightarrow \mathbb{R}$:
\[
\left\{
       \begin{array}{@{}l@{\thinspace}l}
       \partial_t(\Delta \Psi) + \grad^\perp \Psi \cdot \grad (\Delta \Psi) = 0  \hspace{.89in}  t>0,\hspace{.1in} z>0,\hspace{.1in} x=(x_1,x_2)\in\mathbb{R}^2  \\
       \partial_t(\dnu)+ \grad^\perp \Psi \cdot \grad(\dnu)= \lap \Psi \hspace{.67in}  t>0,\hspace{.1in} z=0,\hspace{.1in} x=(x_1,x_2)\in\mathbb{R}^2 \hspace{.3in} \text{(QG)}\\
       \Psi(0,z,x) = \Psi_0(z,x)  \hspace{1.485in}  t=0,\hspace{.1in} z\geq0,\hspace{.1in} x=(x_1,x_2)\in\mathbb{R}^2
       \end{array}  \right.
.\]
As a convention, we choose the vertical component to be the first component of any vector in $\mathbb{R}_+^3$. We employ the following notation:
$$ \grad \Psi = (0, \partial_{x_1} \Psi, \partial_{x_2} \Psi), $$ and $$ \lap \Psi = \partial_{x_1x_1} \Psi + \partial_{x_2x_2} \Psi .$$
The velocity field for the stratified flow is given by $$ \grad^\perp \Psi = (0, -\partial_{x_2} \Psi, \partial_{x_1} \Psi).$$
At the boundary $z=0$, $\dnu$ is a function of $x$ and $t$ only and denotes the Neumann condition $$ \dnu(t,x) = -\partial_z\Psi(t,0,x).$$  Here $\Delta\Psi = \partial_{zz} \Psi + \partial_{x_1x_1}\Psi + \partial_{x_2x_2}\Psi$ is the usual Laplacian.  In this article, we construct a unique, global smooth solution of (QG) for any initial value $\Psi_0$ which is smooth enough.  

The 3D quasi-geostrophic system is a widely used model in oceanography and meteorology to describe large-scale oceanic and atmospheric circulation. The system includes two coupled equations. First, beginning with Navier-Stokes and accounting for the rotation of the Earth, one derives a transport equation on the vorticity.  Second, a careful analysis of the Ekman layers near the boundary produces an equation which $\dnu$ satisfies. Chemin \cite{cg} considered the convergence in the limit of solutions to the primitive equations to a solution of the quasi-geostrophic equation. In addition, rigorous derivations were carried out by Beale and Bourgeois \cite{bb} in the absence of the boundary layer and Desjardins and Grenier \cite{dg} with the inclusion of the boundary layer.   Much of the difficulty in the analysis in fact stems from the boundary layer.  Taking advantage of the viscous term on the boundary, Desjardins and Grenier \cite{dg} constructed global weak solutions.  Recently, global weak solutions were constructed in the inviscid case \cite{pv}.  Much recent work has also been focused on a simplified model first studied by Constantin, Majda, and Tabak \cite{cmt} and known as the surface quasigeostrophic equation (SQG).  There are different variants of SQG depending on the strength of the diffusive term.  In the critical case, global regularity has been obtained by several different authors, each utilizing different techniques; see Kiselev, Nazarov, and Volberg \cite{knv}, \cite{cv}, Kiselev and Nazarov \cite{kn}, and Constantin and Vicol \cite{cvicol}.   Many authors have also emphasized the connection between critical SQG and 3D Navier-Stokes and have used versions of SQG, especially the inviscid one, as toy models for 3D fluid equations (see Constantin \cite{Constantin2006} and Held, Garner, Pierrehumbert, and Swanson \cite{MR1312238}).  

This paper is dedicated to a proof of the following well-posedness result for $(\QG)$.  
\begin{theorem}\label{main}
Let the initial data $\nabla\Psi_0 \in H^s(\mathbb{R}_+^3)$ for some $s\geq3$.  Then there exists a unique classical solution $\Psi$ to $(\QG)$ satisfying the following: for all $T>0$, there exists $C(T,s)$ such that for all $t\leq T$, $||\nabla\Psi(t, \cdot)||_{H^s(\mathbb{R}_+^3)} \leq C(T,s)$.  In addition, if the initial data $\nabla\Psi_0 \in H^s(\mathbb{R}_+^3)$ for all $s$, then for all $T$, $\Psi \in C^\infty([0,T]\times \mathbb{R}_+^3)$. 
\end{theorem}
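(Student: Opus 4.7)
The plan is to combine a Beale-Kato-Majda (BKM) continuation criterion for the interior inviscid transport equation for $\Delta\Psi$ with the De Giorgi regularization mechanism on the boundary trace $\dnu$. Local-in-time smooth solutions in $H^s$ ($s\geq 3$) follow from standard hyperbolic energy estimates once the velocity $\grad^\perp\Psi$ is recovered from $\Delta\Psi$ and $\dnu$ by solving the elliptic Neumann problem $\Delta\Psi=\omega$ in $\mathbb{R}^3_+$ with $-\partial_z\Psi|_{z=0}=\dnu$. The task is then to upgrade this into a global statement by showing that the quantity driving BKM, essentially $\int_0^T \|\nabla(\grad^\perp\Psi)\|_{L^\infty(\mathbb{R}^3_+)}\,dt$, remains finite on any $[0,T]$.

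\textbf{A priori bounds and the continuation criterion.} First I would collect the elementary estimates. Because the velocity $\grad^\perp\Psi$ is divergence-free and vertical-free, the interior equation transports $\Delta\Psi$ along vertical columns, so all $L^p$-norms of $\Delta\Psi$ (in particular $L^\infty$) are conserved. On the trace, $\dnu$ solves a transport equation driven by $\lap\Psi|_{z=0}$; a maximum principle and a standard energy estimate, exploiting the Dirichlet-to-Neumann interpretation of this source as (morally) a half-Laplacian acting on $\dnu$, yield an $L^\infty_{t,x}$ bound. Next I would prove a BKM criterion: if a suitable integrated Lipschitz-type norm of the velocity is finite, then $\|\nabla\Psi\|_{H^s}$ propagates. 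Via potential theory for the Neumann problem and Littlewood-Paley decomposition, this quantity is controlled by the $\Bdot^1_{\infty,\infty}$ norm (up to log corrections absorbed in the standard BKM argument) of $\dnu$ on $z=0$, modulo the already-controlled interior vorticity $\Delta\Psi$.

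\textbf{De Giorgi plus bootstrap on the trace.} It therefore suffices to upgrade the $L^\infty$ bound on $\dnu$ all the way up to $\Bdot^1_{\infty,\infty}$. The first gain is obtained by the De Giorgi argument: since the forcing $\lap\Psi|_{z=0}$ behaves like a critical fractional diffusion on $\dnu$, the level-set/oscillation-lemma machinery developed for critical SQG (see \cite{cv}), adapted to the Neumann extension setting, gives $\dnu\in C^\alpha_{t,x}$ for some $\alpha>0$, uniformly on $[0,T]$. From here I would run a bootstrap: Hölder regularity of the drift via the harmonic extension improves Schauder-type estimates for the fractional drift-diffusion equation satisfied by $\dnu$, which upgrades $\dnu$ to $C^{1,\beta}$ and eventually, after iterating and passing through the endpoint, to $\Bdot^1_{\infty,\infty}$; Littlewood-Paley paraproduct bounds are used to handle the transport nonlinearity at each stage and potential theory to transfer regularity between trace and interior.

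\textbf{Main obstacle.} I expect the genuinely hard step to be the final bootstrap across the critical scaling, from Hölder/$C^{1,\beta}$ up to $\Bdot^1_{\infty,\infty}$: the fractional diffusion $\lap\Psi|_{z=0}$ is exactly at the scaling-critical level for the boundary equation, so gaining the full derivative requires sharp Littlewood-Paley commutator estimates in the transport term, together with precise control of how the Neumann harmonic extension from $z=0$ into $\mathbb{R}^3_+$ transfers Besov regularity (in particular, how $\Bdot^1_{\infty,\infty}$ data lifts to a log-Lipschitz velocity field in the interior). Once this endpoint regularity is established, the BKM criterion closes the bootstrap, yielding global $H^s$ bounds for each $s\geq 3$; uniqueness is standard given the log-Lipschitz velocity, and the $C^\infty$ statement follows by running the same propagation argument for arbitrarily many derivatives.
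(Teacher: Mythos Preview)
Your outline is correct and follows essentially the same architecture as the paper: decompose $\Psi=\Psi_1+\Psi_2$, use the a priori bounds and De Giorgi on the trace equation to get $\dnu\in C^\alpha$, bootstrap to $\Bdot^1_{\infty,\infty}$, transfer to $\nabla\Psi$ via the Poisson extension and Riesz transforms, then close a BKM-type argument with the log-inequality and commutator estimates.

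Two places where your description is looser than what is actually needed. First, the $L^\infty$ bound on $\dnu$ does \emph{not} come from a simple maximum principle: after the decomposition the boundary equation reads $\partial_t(\dnu)+u\cdot\grad(\dnu)+\half(\dnu)=\lap\Psi_2|_{z=0}$, and the forcing $\lap\Psi_2|_{z=0}$ is only controlled in $\Bdot^0_{\infty,\infty}$ (equivalently $(-\lap)^{-1/4}(\lap\Psi_2)\in C^{1/2}$), not in $L^\infty$. The $L^\infty$ bound on $\dnu$ is therefore itself the first De Giorgi lemma, with a nontrivial treatment of this sub-critical forcing via the Gagliardo seminorm. Second, the bootstrap is not a generic Schauder or paraproduct argument: the paper's mechanism is to fix $(t_0,x_0)$, change variables along the characteristic $\Gamma(t)$ so that both $\tilde u(t,0)=0$ and $\tilde\theta(t_0,0)=0$, and then estimate the Duhamel integral against $\nabla_x\mathcal P$ directly as a singular integral in space-time; this is precisely what makes the H\"older exponents of $u$ and $\theta$ \emph{add} at each step. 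The Littlewood-Paley argument is used only for the linear forcing piece $\lap\Psi_2$, which is already in $L^\infty_t\Bdot^1_{\infty,\infty}$ and hence caps the bootstrap at $\Bdot^1_{\infty,\infty}$ rather than $C^{1,\beta}$.
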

  The bulk of the proof is centered around verifying a version of the Beale-Kato-Majda criterion from \cite{bkm}.  The idea is to first decompose the solution $\Psi = \Psi_1 + \Psi_2$ into two components as follows: 

\[
 \left\{
       \begin{array}{@{}l@{\thinspace}l}
       \Delta \Psi_1 = 0 \\
       \dnu_1 = \dnu 
       \end{array}  \right.
\hspace{.3in} 
 \left\{
       \begin{array}{@{}l@{\thinspace}l}
       \Delta \Psi_2 = \Delta \Psi \\
       \dnu_2 = 0.
       \end{array}  \right.
\]

  Intuitively, $\Psi_1$ is the problematic term since it contains the boundary condition.  We will find that $\partial_\nu \Psi_1$ satisfies an equation resembling critical 2D SQG, with an adjustment to the drift term and a forcing term appearing from the presence of $\Psi_2$. To show that $\dnu_1$ is H\"{o}lder continuous, we utilize the De Giorgi technique following \cite{cv} and \cite{v} (see also Friedlander and Vicol \cite{Friedlander2011} for an application to active scalar equations).   We then improve the regularity using Littlewood-Paley techniques and potential theory to bootstrap (see \cite{cv}, Constantin and Wu \cite{cw}, and Dong and Pavlovi\'{c} \cite{pavlovic}).   In order to then show global well-posedness, one generally requires Lipschitz regularity or a suitable substitute on the velocity $\grad^\perp \Psi$.  Due to the fact that $\partial_x \Psi_1, \partial_y \Psi_1$ are related to $\partial_z \Psi_1$ via the Riesz transforms and the fact that $\grad \Psi_2$ is not even Lipschitz, the Besov version of Lipschitz regularity must suffice.  In the literature, this space is referred to as $\Bdot_{\infty, \infty}^1$, or the Zygmund class. The texts of Stein \cite{Stein} and Grafakos \cite{Grafakos} include thorough expositions of the essential theory, while Chemin \cite{Chemin} and Bahouri, Chemin, and Danchin \cite{bcd} have detailed the application of the Zygmund class to the study of wide classes of PDE's, particularly the incompressible Euler equations.  For us, the most useful property of Besov spaces will be an inequality which controls the $L^\infty$ norm by the $\Bdot_{\infty,\infty}^0$ Besov norm, a lower Sobolev norm, and a logarithm of a higher Sobolev norm.  From there, we can prove propagation of regularity.  

The first section of the paper sets the notation and recalls some necessary results.  The second section contains the proof of the $C^\alpha$ regularity on $\dnu_1$.  In the third section, we bootstrap the regularity of $\dnu_1$ (and therefore $\nabla \Psi_1$) up to $\Bdot_{\infty, \infty}^1$.  In the last section, we show the propagation of regularity.  The appendix provides sketches of several calculations, some of which can be found in Bahouri, Chemin, and Danchin \cite{bcd}, Chemin \cite{Chemin}.  We record them here for the sake of completeness and readability.

\section{Notation and Preliminaries}

We use the notation $L^p(\mathbb{R}^n)$ for the Lebesgue spaces.  We denote the usual Hilbert Sobolev spaces (for fractional and integer $s$) by $H^s(\mathbb{R}^n)$.  The homogeneous Sobolev spaces are denoted $\Hdot^s(\mathbb{R}^n)$ and are defined as the space of functions $f$ such that $(-\Delta)^\frac{s}{2} f \in L^2$.  Equivalently, we can define $\Hdot^s(\mathbb{R}^n)$ for $s\in(0,1)$ using the Gagliardo seminorm (see Di Nezza, Palatucci, and Valdinoci \cite{hitchhiker}).  To define $\Hdot^\frac{1}{2}(\Omega)$ for bounded sets $\Omega \in \mathbb{R}^n$, we shall use the Gagliardo seminorm.  Negative Sobolev spaces $H^{-z}(\Omega)$ or $ H^{-z}(\mathbb{R}^n)$ for $z\in \mathbb{N}$ are defined as the duals of $H^z_0(\Omega)$ or $H^z(\mathbb{R}^n)$, respectively.   We use the notation $\nabla^s f$ to denote the collection of all partial derivatives of order $s\in\mathbb{N}$ .

In this paper, we consider functions defined on $\mathbb{R}^2$ or $\mathbb{R}_+^3 = [0,\infty)\times\mathbb{R}^2$ .  It will be convenient to keep track of when functions are being differentiated in $x$ only.  For that reason, and also to emphasize when we are considering functions defined on $\mathbb{R}^2$, we employ the following notations.  

\begin{definition} Let $f$ be a real-valued function defined on $\mathbb{R}_+^3$.  Put $\lap f = \partial_{x_1x_1}f+\partial_{x_2x_2}f$ and $\grad f = (0, \partial_{x_1} f, \partial_{x_2} f)$.  Let $(\fractional f)\string^(z,\xi)=\hat{f}(z,\xi)\cdot|\xi|^{2\alpha} $, where the Fourier transform is being taken in $x$ only for each fixed $z$ (ignoring constants coming from the Fourier transform).  For a partial differential operator with multi-index $\alpha=(\alpha_1, \alpha_2)$, $\Dbar^\alpha f$ denotes differentation in the flat variables $(x_1, x_2)$.  When $f$ is only defined on $\mathbb{R}^2$, we will use the above symbols to denote the usual differential operators.   
\end{definition}

We recall the well known fact that the characteristic function $\mathcal{X}_E$ of a bounded, Lebesgue measurable set $E$ belongs to $H^s$ if and only if $s<\frac{1}{2}$ (see Bourgain, Brezis, and Mironescu \cite{Bourgain2000} for a detailed discussion).  The following is a corollary which will be necessary to prove the decrease in oscillation in the De Giorgi argument.  

\begin{prop}\label{bbm}
Let $\phi$ be a radially symmetric and decreasing, $C^\infty$ bump function such that $0\leq\phi(x) \leq 1$ for all $x$, $\phi=1$ on $B_1(0)$, and $\supp\phi\subset B_2(0)$. Let $r(x)$ be a nonnegative, bounded function such that $r^2(x) \in H^\frac{1}{2}(\supp\phi)$.  Then if $\{ x: 0 < r^2(x) < \frac{1}{4} \phi^2 \}$ is empty, either $r=0$ or $r^2 \geq \frac{1}{4}\phi^2$ on $\supp \phi$.
\end{prop}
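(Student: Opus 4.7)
The plan is to reduce the statement to the $H^{1/2}$ dichotomy for characteristic functions recalled just before the proposition: the characteristic function of a measurable subset of a bounded domain lies in $H^{1/2}$ only when the subset is null or co-null. Setting $A := \{x : r^2(x) \geq \tfrac{1}{4}\phi^2(x)\}$, the hypothesis that $\{0 < r^2 < \tfrac{1}{4}\phi^2\}$ is empty rewrites as $\{\phi > 0\} \subset A \cup \{r = 0\}$ (up to a null set), and on $\{\phi > 0\}$ the two pieces are disjoint since $\tfrac{1}{4}\phi^2 > 0$ there.

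The first real step is to isolate $\chi_A$ on concentric balls where $\phi$ is bounded below. Let $\rho_\ast := \sup\{\rho > 0 : \phi > 0 \text{ on } B_\rho(0)\} \in [1,2]$. For each $\rho \in (0, \rho_\ast)$ the radial monotonicity of $\phi$ gives $c_\rho := \tfrac{1}{4}\phi(\rho e_1)^2 > 0$ and $\tfrac{1}{4}\phi^2 \geq c_\rho$ on $\overline{B_\rho(0)}$. Since $t \mapsto \min(t, c_\rho)$ is $1$-Lipschitz and Lipschitz composition preserves the Gagliardo $H^{1/2}$ seminorm, $\min(r^2, c_\rho) \in H^{1/2}(\supp\phi)$, and boundedness plus restriction to the smaller domain give $\min(r^2, c_\rho) \in H^{1/2}(B_\rho(0))$. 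On $B_\rho(0)$ this truncation takes only two values: $0$ (where $r=0$) and $c_\rho$ (where $r^2 \geq \tfrac{1}{4}\phi^2 \geq c_\rho$, i.e.\ on $A$). Therefore
\[
\chi_{A \cap B_\rho(0)} \;=\; c_\rho^{-1}\min(r^2, c_\rho) \;\in\; H^{1/2}(B_\rho(0)),
\]
and the cited corollary of Bourgain--Brezis--Mironescu forces $|A \cap B_\rho(0)| \in \{0, |B_\rho(0)|\}$ for every $\rho \in (0,\rho_\ast)$.

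Finally, this ball-by-ball dichotomy must be promoted to a single global alternative. The two options are monotone in $\rho$: if $A \supset B_\rho(0)$ a.e.\ then the same holds on every smaller ball, and if $|A \cap B_\rho(0)|=0$ likewise on every smaller ball, so the alternatives cannot switch as $\rho$ varies in $(0,\rho_\ast)$. Exactly one of them therefore holds uniformly. In the first case, $A \supset B_{\rho_\ast}(0)$ a.e., yielding $r^2 \geq \tfrac{1}{4}\phi^2$ a.e.\ on $B_{\rho_\ast}(0)$ and trivially on $\supp\phi \setminus B_{\rho_\ast}(0) \subset \{\phi = 0\}$. In the second, $r = 0$ a.e.\ on $\{\phi > 0\}$, hence a.e.\ on $\supp\phi$. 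The main obstacle, and the reason this is not a one-line application of the dichotomy, is the non-constancy of $\phi$: the level set $A$ is not directly expressible through truncation of $r^2$ by a single constant, and the remedy is exactly the $c_\rho$-thresholding on concentric balls combined with the monotonicity step above.
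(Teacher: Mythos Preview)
Your argument is correct and follows precisely the route the paper signals: the proposition is stated there without proof, merely as a corollary of the fact (attributed to Bourgain--Brezis--Mironescu) that a characteristic function of a nontrivial measurable set cannot belong to $H^{1/2}$. You have supplied the missing details, and in particular handled the one genuine technical point the paper leaves implicit---that $\phi$ is not constant, so one cannot directly truncate $r^2$ at a single level to produce a characteristic function. Your remedy (truncate at $c_\rho$ on each $B_\rho$, invoke the dichotomy there, then use monotonicity in $\rho$) is clean and correct; the Lipschitz-composition stability of the Gagliardo seminorm and the restriction $H^{1/2}(\supp\phi)\hookrightarrow H^{1/2}(B_\rho)$ are exactly the right tools.
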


Lipschitz spaces and their variants will be referred to frequently throughout.  

\begin{definition}
\begin{enumerate}
\item For $\alpha\in(0,1)$, let $C^\alpha = \{ f:||f||_{C^\alpha}<\infty \}$, where 
$$||f||_{C^\alpha} = ||f||_{L^\infty} + \sup_{x\neq y}{\frac{|f(x)-f(y)|}{|x-y|^\alpha}}.$$
Also, the homogeneous space $\Cdot^\alpha$ is defined as 
$$ \{ f: \sup_{x\neq y}{\frac{|f(x)-f(y)|}{|x-y|^\alpha } }<\infty \} .$$
\item Let $\Lip = \{ f:||f||_{\Lip}<\infty \}$, where 
$$||f||_{\Lip} = ||f||_{L^\infty} + \sup_{x\neq y}{\frac{|f(x)-f(y)|}{|x-y|}}.$$
\item Let the space of log-Lipschitz functions $\LL = \{ f:||f||_{\LL}<\infty \}$, where 
$$||f||_{\LL} = ||f||_{L^\infty} + \sup_{|x-y|<1, x\neq y}{\frac{|f(x)-f(y)|}{|x-y|(1-\log(|x-y|))}}.$$
\end{enumerate}
\end{definition}

	Let us now recall the classical Littlewood-Paley operators and the relevant function spaces, as well as some equivalences.  Let $\mathcal{S}(\mathbb{R}^n)$ denote the  Schwarz class of rapidly decaying smooth functions, and $\mathcal{S}'(\mathbb{R}^n)$ the dual space of tempered distributions.  Letting $\mathcal{P}$ denote the space of polynomials, we construct the space $\mathcal{S}' / \mathcal{P}$, i.e., tempered distributions modulo polynomials.  
	We employ the standard dyadic decomposition of $\mathbb{R}^n$, specifically a sequence of smooth functions $\{\Phi_j \}_{j\in \mathbb{Z}}$ such that 
$$\supp \hat{\Phi}_j \subset \{\xi\in\mathbb{R}^n : |\xi|\in (2^{j-1},2^{j+1}) \}  $$	
and
\[
\sum_{j\in\mathbb{Z}}{\hat{\Phi}_j(\xi)}= \left\{
       \begin{array}{@{}l@{\thinspace}l}
       0 \hspace{.2in} \textup{if} \hspace{.2in} \xi=0\\
       1 \hspace{.2in} \textup{if} \hspace{.2in} \xi\in\mathbb{R}^n \setminus \{0\}.\
       \end{array}  \right.
\]

For $f\in \mathcal{S}' / \mathcal{P}$ and $j \in \mathbb{Z}$, we define $\Delta_j f = \Phi_j \ast f$.

\begin{definition} 
For $s\in\mathbb{R}$ and $1\leq p,q \leq \infty$, the space $\Bdot_{p,q}^s$ is defined as $$ \{ f \in \mathcal{S}' / \mathcal{P}: ||f||_{\Bdot_{p,q}^s}<\infty \} $$
where the homogeneous Besov norm is defined as the $l^p$ norm of the doubly-infinite sequence of Littlewood-Paley projections:
$$ ||f||_{\Bdot_{p,q}^s} = ||\{2^{js}||\Delta_j f||_{L^q}\}_{j\in\mathbb{Z}}||_{l^p}.$$
\end{definition}
In nearly every usage throughout the paper, the Littlewood-Paley projections and the accompanying Besov norms are in $x=(x_1,x_2)$ only; for clarity and emphasis we will use the notation $\Bdot_{p,q}^s(\mathbb{R}^2)$.

We record the following Bernstein inequalities (see \cite{bcd}).  
\begin{prop}\label{bernstein}
\begin{enumerate}
\item Let $\mathcal{C}$ be an annulus in $\mathbb{R}^d$, $m\in \mathbb{R}$, and $k=2\lfloor 1+ \frac{d}{2}\rfloor$.  Let $\sigma$ be a k-times differentiable function on $\mathbb{R}^d \setminus\{0\}$ such that for any $\alpha \in \mathbb{N}^d$ with $|\alpha| \leq k$, there exists a constant $C_\alpha$ such that 
$$ \forall \xi \in \mathbb{R}^d, |D^\alpha \sigma(\xi)| \leq C_\alpha |\xi|^{m-|\alpha|}.$$
There exists a constant $C$, depending only on the constants $C_\alpha$, such that for any $p\in[1,\infty]$ and any $\lambda>0$, we have, for any function $u$ in $L^p$ with Fourier transform supported in $\lambda\mathcal{C}$, 
$$ ||(\sigma(\xi)\hat{u}(\xi))^{\vee}||_{L^p} \leq C\lambda^m||u||_{L^p} .$$
\item Let $p\in[1,\infty]$ and $s\in\mathbb{R}$.  Then for any $j\in\mathbb{Z}$, there exist constants $c_1, c_2$ such that 
$$ c_1 2^{2j\alpha} ||\Delta_j u ||_{L^p} \leq || (-\Delta)^\alpha \Delta_j u||_{L^p} \leq c_2 2^{2j\alpha} ||\Delta_j u||_{L^p}  $$
\end{enumerate}
\end{prop}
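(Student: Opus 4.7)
The plan is to deduce part (2) from part (1) by applying the multiplier estimate to the specific symbols $|\xi|^{2\alpha}$ and $|\xi|^{-2\alpha}$, so the main work lies in establishing (1).

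For (1), the natural strategy is to realize the operator $u \mapsto (\sigma \hat u)^\vee$ as convolution against a kernel whose $L^1$ norm scales like $\lambda^m$, and then apply Young's convolution inequality. Concretely, I would fix a smooth bump $\chi$ supported in an annulus slightly larger than $\mathcal{C}$ with $\chi \equiv 1$ on $\mathcal{C}$, so that whenever $\hat u$ is supported in $\lambda\mathcal{C}$ the identity $\hat u(\xi) = \chi(\xi/\lambda)\hat u(\xi)$ holds. Setting $K_\lambda := (\sigma\,\chi(\cdot/\lambda))^\vee$, the factorization $(\sigma\hat u)^\vee = K_\lambda \ast u$ reduces everything to the bound
\[ ||K_\lambda||_{L^1(\mathbb{R}^d)} \leq C\lambda^m. \]

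To obtain this bound I would change variables $\xi = \lambda\eta$, which gives
\[ K_\lambda(x) = \lambda^d \int_{\mathbb{R}^d} e^{i\lambda x\cdot \eta}\, \sigma(\lambda\eta)\chi(\eta)\, d\eta. \]
On the bounded annular support of $\chi$, the hypothesis $|D^\beta\sigma(\xi)| \leq C_\beta|\xi|^{m-|\beta|}$ combined with the chain rule yields $|D^\beta_\eta(\sigma(\lambda\eta)\chi(\eta))| \leq C'_\beta \lambda^m$ uniformly in $\lambda > 0$. Integrating by parts $k = 2\lfloor 1 + d/2 \rfloor$ times in $\eta$ against the oscillatory phase $e^{i\lambda x\cdot \eta}$ produces the pointwise bound $|K_\lambda(x)| \leq C\lambda^{d+m}(1 + \lambda|x|)^{-k}$. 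Since $k > d$ by construction, this is integrable, and the substitution $y = \lambda x$ gives exactly $||K_\lambda||_{L^1} \leq C\lambda^m$. The one mildly delicate point is that $k$ must simultaneously exceed $d$ for integrability and be supported by the hypothesized regularity on $\sigma$; the stated choice $k = 2\lfloor 1 + d/2 \rfloor$ is the minimal even integer strictly larger than $d$ in every dimension.

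For (2), observe that $\Delta_j u$ has Fourier support in the fixed dyadic annulus $2^j\mathcal{C}$ with $\mathcal{C} = \{\xi : 1/2 < |\xi| < 2\}$. Applying part (1) with $\sigma(\xi) = |\xi|^{2\alpha}$, $m = 2\alpha$, and $\lambda = 2^j$ delivers the upper bound immediately. For the lower bound I would instead apply (1) to $v := (-\Delta)^\alpha \Delta_j u$ with the multiplier $\sigma(\xi) = |\xi|^{-2\alpha}$, which still satisfies the required derivative estimates since its symbol is smooth away from the origin and the Fourier support of $v$ avoids a neighborhood of zero; because $(|\xi|^{-2\alpha}\hat v)^\vee = \Delta_j u$, this produces $||\Delta_j u||_{L^p} \leq C\,2^{-2j\alpha}||(-\Delta)^\alpha \Delta_j u||_{L^p}$, which rearranges to the desired lower inequality. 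The main conceptual obstacle is the kernel estimate above; once it is in hand, everything else amounts to bookkeeping homogeneities.
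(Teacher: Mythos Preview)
Your proof is correct and follows the standard approach: localize the symbol with a cutoff adapted to the annulus, estimate the $L^1$ norm of the resulting convolution kernel via repeated integration by parts against the oscillatory phase (using the $k$ available derivatives of $\sigma$ to gain enough decay in $x$), and then apply Young's inequality; part (2) follows cleanly by specializing to $\sigma(\xi)=|\xi|^{\pm 2\alpha}$.

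There is nothing to compare against here, however: the paper does not supply its own proof of this proposition. It is stated as a recalled fact with a citation to Bahouri--Chemin--Danchin (``We record the following Bernstein inequalities (see \cite{bcd})''), and no argument is given in the body or the appendix. Your write-up is essentially the proof one finds in that reference.
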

We several corollaries in the following proposition.  
\begin{prop}\label{corollaries} Let $s\in \mathbb{R}$, $p,q \in [1,\infty]$.  
\begin{enumerate}
\item Let $\mathcal{R}_j$ denote the $j^{th}$ Riesz transform with Fourier multiplier $\frac{i\xi_j}{|\xi|}$.  Then $\mathcal{R}_j$ is a bounded linear operator from $\Bdot_{p,q}^s$ to itself. 
\item Let $\alpha$ be a multi-index. Then the partial differential operator $D^\alpha$ is bounded from $\Bdot_{p,q}^s$ to $\Bdot_{p,q}^{s-|\alpha|}$.
\item Given $\alpha\in\mathbb{R}$, the operator $(-\Delta)^\alpha$ is bounded from $\Bdot_{p,q}^s$ to $\Bdot_{p,q}^{s-2\alpha}$.  
\item For $\alpha\in \mathbb{R}$ and $p\in[1,\infty]$, $||f||_{\Bdot_{p,\infty}^\alpha} \leq || (-\Delta)^\alpha f ||_{L^p}$.
\end{enumerate}
\end{prop}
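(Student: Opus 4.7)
The plan is to derive all four assertions from Proposition \ref{bernstein}, exploiting the elementary fact that each operator under consideration is a Fourier multiplier and therefore commutes with the Littlewood-Paley projector $\Delta_j$; i.e.\ $\Delta_j T f = T \Delta_j f$. Once this is in hand, each item reduces to comparing $\|T \Delta_j f\|_{L^p}$ with $\|\Delta_j f\|_{L^p}$ block by block, then multiplying by the appropriate dyadic weight and taking the $l^q$ norm in $j$ to recover the Besov norm.

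For (1), I view the $k$th Riesz symbol $i\xi_k / |\xi|$ as a smooth function on $\mathbb{R}^n \setminus \{0\}$ which is homogeneous of degree zero and therefore satisfies a derivative estimate of the form $|D^\beta \sigma(\xi)| \leq C_\beta |\xi|^{-|\beta|}$. Part (1) of Proposition \ref{bernstein} with $m=0$ then gives $\|\mathcal{R}_k \Delta_j f\|_{L^p} \leq C \|\Delta_j f\|_{L^p}$ uniformly in $j$, and the Besov bound for (1) follows immediately after reweighting. For (2) I repeat the argument with $\sigma(\xi) = (i\xi)^\alpha$ and $m = |\alpha|$; the loss of $2^{j|\alpha|}$ in the block estimate is exactly absorbed by the shift $s \mapsto s - |\alpha|$ in the Besov index.

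For (3) and (4) I invoke part (2) of Proposition \ref{bernstein}. The upper bound $\|(-\Delta)^\alpha \Delta_j f\|_{L^p} \leq c_2\, 2^{2j\alpha}\|\Delta_j f\|_{L^p}$ together with the commutation $\Delta_j (-\Delta)^\alpha = (-\Delta)^\alpha \Delta_j$ yields (3) after multiplication by $2^{j(s-2\alpha)}$ and taking the $l^q$ norm. For (4), I use the matching lower bound $c_1\, 2^{2j\alpha}\|\Delta_j f\|_{L^p} \leq \|(-\Delta)^\alpha \Delta_j f\|_{L^p}$, rewrite the right-hand side as $\|\Delta_j (-\Delta)^\alpha f\|_{L^p}$, and then dominate by $\|(-\Delta)^\alpha f\|_{L^p}$ using that $\Delta_j$ is convolution with a (rescaled) Schwartz function and is therefore bounded on $L^p$ with a constant independent of $j$ by Young's inequality. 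Taking the supremum over $j$ closes (4).

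Given how directly each item follows from Proposition \ref{bernstein}, there is no substantive obstacle; the items are really corollaries of Bernstein plus the observation that Fourier multipliers preserve the annular frequency support and hence commute with $\Delta_j$. The two small points requiring care are (a) working in $\mathcal{S}'/\mathcal{P}$ so that the Fourier multipliers and the Littlewood-Paley blocks act unambiguously, and (b) checking in (4) that the uniform $L^p$ bound on $\Delta_j$ holds with a constant independent of $j$, which follows from the scaling invariance of $\|\Phi_j\|_{L^1}$.
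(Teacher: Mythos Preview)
Your proposal is correct and follows exactly the approach the paper intends: the paper states these items as corollaries of Proposition~\ref{bernstein} (Bernstein's inequalities) and omits the proof, and your argument supplies precisely those details by localizing each Fourier multiplier to a dyadic block and invoking the appropriate part of Proposition~\ref{bernstein}. There is nothing to add.
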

We collect several facts concerning the Besov spaces $\Bdot_{\infty,\infty}^s$.  For a more detailed discussion as well as proofs, see \cite{Grafakos}.  
\begin{prop}\label{equivalences}
\begin{enumerate}
\item The space $\Bdot_{\infty,\infty}^1$ can be characterized as the space of functions such that
$$ ||f||_{\Bdot_{\infty,\infty}^1} = \sup_{x,y\in\mathbb{R}^n, y\neq 0}{\frac{|f(x+y)+f(x-y)-2f(x)|}{|y|}<\infty} $$
with equivalence in norm holding between the difference quotient and Littlewood-Paley characterizations.  
\item For non-integer values of $s$, the spaces $\Bdot_{\infty,\infty}^s$ and $\Cdot^s$ are equivalent, with an equivalence in norm (which is not uniform in $s$).  
\item For any strictly positive $s$, the restriction of any function $f\in\Bdot_{\infty,\infty}^s(\mathbb{R}^n)$ to any $k$-dimensional affine subset produces a function in $\Bdot_{\infty,\infty}^s(\mathbb{R}^k)$ with $||f||_{\Bdot_{\infty,\infty}^s(\mathbb{R}^k)} \leq ||f||_{\Bdot_{\infty,\infty}^s(\mathbb{R}^n)}$.
\end{enumerate}
\end{prop}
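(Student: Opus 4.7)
The plan is to prove each of the three parts using the same Littlewood--Paley toolkit: the Bernstein inequalities from Proposition~\ref{bernstein}, the mean-zero and evenness properties of the dyadic kernels $\Phi_j$, and a split of the series $f = \sum_j \Delta_j f$ around the critical scale $j_0$ determined by $2^{j_0}|y| \sim 1$.

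For part (1), the forward direction (Littlewood--Paley $\Rightarrow$ second difference) uses a second-order Taylor expansion on low frequencies $j \leq j_0$, so that the contribution of $\Delta_j f(x+y)+\Delta_j f(x-y)-2\Delta_j f(x)$ is bounded by $|y|^2 \|\nabla^2 \Delta_j f\|_{L^\infty} \lesssim |y|^2 2^{2j} \|\Delta_j f\|_{L^\infty}$; on high frequencies $j > j_0$ I would use the trivial bound $4\|\Delta_j f\|_{L^\infty}$. Summing the two geometric series against $\|\Delta_j f\|_{L^\infty}\lesssim 2^{-j}\|f\|_{\Bdot_{\infty,\infty}^1}$ produces $|f(x+y)+f(x-y)-2f(x)| \lesssim |y| \|f\|_{\Bdot_{\infty,\infty}^1}$. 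For the converse, since $\Phi_j$ is even with zero integral, I would symmetrize the convolution as
$$2\Delta_j f(x) = \int \Phi_j(y)\bigl[f(x+y)+f(x-y)-2f(x)\bigr]\,dy,$$
so that after scaling $\int |\Phi_j(y)|\, |y|\, dy \lesssim 2^{-j}$ directly yields $\|\Delta_j f\|_{L^\infty} \lesssim 2^{-j}$ times the Zygmund seminorm.

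For part (2) with $s \in (0,1)$, I would run the analogous split to prove $\Bdot_{\infty,\infty}^s \hookrightarrow \Cdot^s$, now using $|\Delta_j f(x+y)-\Delta_j f(x)| \leq \min(|y|2^j,2)\|\Delta_j f\|_{L^\infty}$ and summing $2^{-js}\min(2^j|y|,2)$ geometrically on each side of $j_0$. The reverse inclusion uses the first-order identity $\Delta_j f(x) = \int \Phi_j(y)[f(x-y)-f(x)]\,dy$ combined with $\int |\Phi_j(y)||y|^s\,dy \lesssim 2^{-js}$. For non-integer $s>1$, I would write $s = k+\alpha$ with $\alpha\in(0,1)$, $k\in\mathbb{N}$, and reduce to the previous case via $\|f\|_{\Cdot^s} \asymp \|\nabla^k f\|_{\Cdot^\alpha}$ together with $\|f\|_{\Bdot_{\infty,\infty}^s} \asymp \|\nabla^k f\|_{\Bdot_{\infty,\infty}^\alpha}$, the latter following from Proposition~\ref{corollaries}(2) and a Fourier-multiplier argument (on annular supports, inverting $\nabla^k$ is a bounded operation by Proposition~\ref{corollaries}(1)). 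The failure of uniformity as $s$ approaches an integer is intrinsic: the constants in $\int|\Phi_j(y)||y|^s\,dy$ and in the geometric sums blow up there.

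For part (3), since $s$ may itself be an integer, I would avoid the $\Cdot^s$ route and instead rely on the $N$-th order finite-difference characterization for any integer $N>s$, namely $\|f\|_{\Bdot_{\infty,\infty}^s} \asymp \sup_{h\neq 0} |h|^{-s}\|\Delta_h^N f\|_{L^\infty}$, where $\Delta_h^N$ is the iterated forward difference. The proof is exactly the Bernstein-plus-mean-zero template above, with an $N$-th order Taylor remainder on low frequencies and the identity $\Delta_j f(x) = c_N^{-1}\int \Phi_j(y)(\Delta_y^N f)(x)\,dy$ (valid because $\Phi_j$ has vanishing moments of all orders up to any fixed $N$) on the reverse side. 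Restriction of $f$ to a $k$-dimensional affine subspace $V\subset\mathbb{R}^n$ then preserves the relevant differences pointwise, $\Delta_h^N(f|_V) = (\Delta_h^N f)|_V$ for $h\in V$, so taking the sup over $h\in V$ is bounded by the sup over $h\in \mathbb{R}^n$ and the norm inequality follows with constant $1$. The main technical obstacle I anticipate is verifying the $N$-th order characterization used in (3); once that is in place, everything else is a routine application of the split-at-critical-scale argument.
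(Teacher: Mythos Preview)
The paper does not actually prove this proposition: immediately before the statement it writes ``For a more detailed discussion as well as proofs, see \cite{Grafakos},'' and no argument is given anywhere in the text or appendix. So there is nothing in the paper to compare your argument against.

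Your outline is correct and is essentially the standard textbook proof (as in Grafakos or Bahouri--Chemin--Danchin): split the Littlewood--Paley series at the scale $2^{j_0}\sim |y|^{-1}$, use Bernstein/Taylor on low frequencies and the trivial bound on high frequencies for one direction, and use the vanishing moments of $\Phi_j$ to write $\Delta_j f$ as an integral against the relevant finite difference for the other. Two minor remarks. First, you invoke evenness of $\Phi_j$; the paper's definition of the dyadic decomposition does not state this, but one may always choose $\hat\Phi_j$ radial without changing the Besov norm up to equivalence, so this is harmless. Second, in part (3) your argument gives the inequality with constant $1$ in the $N$-th order finite-difference seminorm; since the paper's Besov norm is the Littlewood--Paley one, passing through the equivalence may introduce a dimension-dependent constant. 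The paper only uses part (3) with $s=1$ (in the proof of Proposition~\ref{apriori}(4)), where your part (1) already gives the second-difference characterization directly, so this point is immaterial for the application.
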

 
The following proposition will be used briefly in the isoperimetric lemma in the De Giorgi argument.  We prove it in the appendix.  

\begin{prop}\label{duality}
\begin{enumerate}
\item Suppose that $(-\lap)^{-\frac{1}{4}} w \in L^\infty(\mathbb{R}^2)$ and $z \in \Hdot^{\frac{1}{2}}\cap L^\infty(\mathbb{R}^2)$ is supported in $B_2(0)$.  Then there exists $C$ independent of $w,z$ such that $$|| wz ||_{H^{-2}(\mathbb{R}^2)} \leq C ||(-\lap)^{-\frac{1}{4}} w||_{L^\infty(\mathbb{R}^2)}  \left( ||z||_{L^\infty(\mathbb{R}^2)} + ||z||_{\Hdot^\frac{1}{2}(\mathbb{R}^2)} \right) $$
\item Suppose that $z \in L^\infty \cap \Hdot^\frac{1}{2}(\mathbb{R}^2)$.  Then there exists $C$ independent of $z$ such that $$||z \half z||_{H^{-2}(\mathbb{R}^2)} \leq C\left( ||z||_{L^\infty(\mathbb{R}^2)}||z||_{\Hdot^\frac{1}{2}(\mathbb{R}^2)}+ ||z||^2_{\Hdot^\frac{1}{2}(\mathbb{R}^2)}\right)$$. 
\end{enumerate}
\end{prop}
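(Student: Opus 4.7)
The plan is to prove both inequalities by duality against $H^2$ test functions, exploiting self-adjointness of the fractional Laplacian to shift derivatives onto the test function before invoking a product rule in $\Hdot^{1/2}$ and the 2D Sobolev embeddings $H^2 \hookrightarrow L^\infty$ and $H^2 \hookrightarrow \Hdot^{1/2}$.

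For part (1), I would start from
\[\|wz\|_{H^{-2}(\mathbb{R}^2)} = \sup_{\|\phi\|_{H^2} \leq 1} \left| \int wz\phi \, dx \right|\]
and move a quarter-derivative off $w$ via self-adjointness of $(-\lap)^{1/4}$:
\[\int wz\phi \, dx = \int (-\lap)^{-\frac{1}{4}} w \cdot (-\lap)^{\frac{1}{4}}(z\phi) \, dx \leq \|(-\lap)^{-\frac{1}{4}} w\|_{L^\infty} \, \|(-\lap)^{\frac{1}{4}}(z\phi)\|_{L^1}.\]
Because $\supp z \subset B_2(0)$, so is $z\phi$, and I would control the $L^1$ norm in two pieces: on $B_4(0)$ by Cauchy--Schwarz and the Plancherel identity $\|(-\lap)^{1/4}(z\phi)\|_{L^2} = \|z\phi\|_{\Hdot^{1/2}}$; on $\mathbb{R}^2 \setminus B_4(0)$ by the pointwise bound
\[|(-\lap)^{\frac{1}{4}}(z\phi)(x)| \lesssim |x|^{-\frac{5}{2}} \|z\phi\|_{L^1(B_2)} \lesssim |x|^{-\frac{5}{2}} \|z\phi\|_{L^\infty},\]
coming from the Riesz-type representation of $(-\lap)^{1/4}$ applied to a function supported in $B_2(0)$, which is integrable at infinity in $\mathbb{R}^2$. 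Both pieces are thus dominated by $C(\|z\phi\|_{L^\infty} + \|z\phi\|_{\Hdot^{1/2}})$. The standard product estimate in $\Hdot^{1/2}$, namely $\|fg\|_{\Hdot^{1/2}} \leq C(\|f\|_{L^\infty}\|g\|_{\Hdot^{1/2}} + \|g\|_{L^\infty}\|f\|_{\Hdot^{1/2}})$, together with the Sobolev embeddings applied to $\phi$, then gives
\[\|z\phi\|_{L^\infty} + \|z\phi\|_{\Hdot^{1/2}} \leq C \bigl(\|z\|_{L^\infty} + \|z\|_{\Hdot^{1/2}}\bigr)\|\phi\|_{H^2},\]
and taking the supremum over $\phi$ completes the proof.

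For part (2), the same framework applies but more cleanly, since no $L^1$ bound is required. Testing $z \half z$ against $\phi$ and distributing the half-Laplacian symmetrically yields
\[\int \phi z \half z \, dx = \int (-\lap)^{\frac{1}{4}}(z\phi) \cdot (-\lap)^{\frac{1}{4}} z \, dx \leq \|z\phi\|_{\Hdot^{1/2}} \|z\|_{\Hdot^{1/2}}\]
by Cauchy--Schwarz. Reusing the product estimate and Sobolev embeddings from part (1) gives $\|z\phi\|_{\Hdot^{1/2}} \leq C(\|z\|_{L^\infty} + \|z\|_{\Hdot^{1/2}})\|\phi\|_{H^2}$, and supping over $\phi$ produces the claimed bound.

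The main technical obstacle is the $L^1$ estimate on $(-\lap)^{1/4}(z\phi)$ in part (1); this is the only place where the compact support hypothesis on $z$ enters essentially, since for a generic $L^\infty \cap \Hdot^{1/2}$ function the $L^1$ norm of its quarter-Laplacian need not be finite. Compact support provides both the local $L^2 \hookrightarrow L^1$ trade on a fixed ball and the polynomial decay of the Riesz kernel at infinity that renders the far-field integrable.
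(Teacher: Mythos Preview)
Your proof is correct and follows essentially the same approach as the paper's: duality against $H^2$, shifting a quarter-derivative onto the product $z\phi$, splitting the $L^1$ estimate into a local piece handled by Cauchy--Schwarz plus the $\Hdot^{1/2}$ product rule and a far-field piece handled by the $|x|^{-5/2}$ decay of the Riesz kernel, and finishing with Sobolev embeddings. The only differences are cosmetic (the paper writes $g$ for your $\phi$ and splits at radius $3$ rather than $4$).
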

 
We shall need to control the $L^\infty$ norm of a function by the $\Bdot_{\infty,\infty}^0$ Besov norm and some Sobolev norms.  The following inequality will suit our purposes; the proof follows that of Proposition 2.104 in \cite{bcd}, and we include it in the appendix.  See also \cite{Chemin} for the same result.  

\begin{prop}\label{inequality}
There exists a constant $C$ such that for any $h = \grad H:\mathbb{R}^2\rightarrow\mathbb{R}^2$,
$$ ||h||_{L^\infty} \leq C||H||_{L^\infty} + {C}||h||_{\Bdot_{\infty,\infty}^0}\left( 1+\log{\frac{||h||_{\Hdot^\frac{3}{2}}}{||h||_{\Bdot_{\infty,\infty}^0}}} \right).$$
\end{prop}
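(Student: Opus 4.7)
The plan is the standard logarithmic interpolation via a three-zone Littlewood-Paley split. Write
\[
h \;=\; \sum_{j<0} \Delta_j h \;+\; \sum_{0\le j\le M}\Delta_j h \;+\; \sum_{j>M}\Delta_j h,
\]
where $M\in\mathbb{N}$ will be chosen at the end, and bound the $L^\infty$ norm of each of the three pieces separately.

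For the low-frequency block I use the hypothesis $h=\grad H$: writing $\Delta_j h=\grad\Delta_j H$ and applying the first part of \Cref{bernstein} with the symbol $\sigma(\xi)=i\xi$ (of order $1$) to the annulus $\{|\xi|\sim 2^j\}$, one obtains $\|\Delta_j h\|_{L^\infty}\lesssim 2^{j}\|\Delta_j H\|_{L^\infty}\lesssim 2^j\|H\|_{L^\infty}$ (the boundedness of $\Delta_j$ on $L^\infty$ follows from the first part of \Cref{bernstein} applied to the $C^\infty_c$ multiplier $\hat\Phi_j$). Summing the geometric series in $j<0$ yields a contribution controlled by $\|H\|_{L^\infty}$. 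For the middle block I use only the definition of $\Bdot_{\infty,\infty}^0$ to get $\|\Delta_j h\|_{L^\infty}\le\|h\|_{\Bdot_{\infty,\infty}^0}$ for each $j$, producing a contribution bounded by $(M+1)\|h\|_{\Bdot_{\infty,\infty}^0}$. For the high-frequency block I combine the Bernstein embedding $L^2(\mathbb{R}^2)\hookrightarrow L^\infty(\mathbb{R}^2)$ on a frequency annulus of size $2^j$ with the Besov-Sobolev relation $\|\Delta_j h\|_{L^2}\lesssim 2^{-3j/2}\|h\|_{\Hdot^{3/2}}$ (another application of \Cref{bernstein}(2)) to obtain
\[
\|\Delta_j h\|_{L^\infty}\;\lesssim\; 2^{j}\|\Delta_j h\|_{L^2}\;\lesssim\; 2^{-j/2}\|h\|_{\Hdot^{3/2}}.
\]
Summing over $j>M$ gives a contribution $\lesssim 2^{-M/2}\|h\|_{\Hdot^{3/2}}$.

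Combining the three estimates,
\[
\|h\|_{L^\infty} \;\le\; C\|H\|_{L^\infty} \;+\; C(M+1)\|h\|_{\Bdot_{\infty,\infty}^0} \;+\; C\,2^{-M/2}\|h\|_{\Hdot^{3/2}}.
\]
I now choose $M$ to balance the last two terms. If $\|h\|_{\Hdot^{3/2}}\le\|h\|_{\Bdot_{\infty,\infty}^0}$ (so the logarithm on the right-hand side is nonpositive), take $M=0$ and the bound reduces to $C\|H\|_{L^\infty}+C\|h\|_{\Bdot_{\infty,\infty}^0}$, which is majorized by the right-hand side of the proposition. Otherwise take $M$ to be the smallest integer with $2^{M/2}\ge\|h\|_{\Hdot^{3/2}}/\|h\|_{\Bdot_{\infty,\infty}^0}$, i.e.\ $M\sim 2\log_2\bigl(\|h\|_{\Hdot^{3/2}}/\|h\|_{\Bdot_{\infty,\infty}^0}\bigr)$; this makes the high-frequency term absorbable into the middle term and delivers the stated log factor.

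The only real subtlety is justifying the pointwise identity $h=\sum_j\Delta_j h$ in $L^\infty$ (not merely in $\mathcal S'/\mathcal P$): the low-frequency sum converges in $L^\infty$ by the geometric bound above (using $h=\grad H$), the middle is a finite sum, and the high-frequency sum converges in $L^\infty$ by the bound $\|\Delta_j h\|_{L^\infty}\lesssim 2^{-j/2}\|h\|_{\Hdot^{3/2}}$. This hidden need for the gradient structure of $h$ to control the low-frequency piece in $L^\infty$ (and thereby eliminate the polynomial ambiguity) is where the assumption $h=\grad H$ is genuinely used and is the main point that has to be handled with care; the rest is bookkeeping along the lines of Proposition 2.104 of \cite{bcd}.
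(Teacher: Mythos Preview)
Your argument is correct and follows essentially the same three-zone Littlewood--Paley split as the paper's proof (which in turn is modeled on Proposition~2.104 of \cite{bcd}). The only cosmetic differences are that the paper bundles the low frequencies into a single operator $\Theta=1-\sum_{j\ge0}\Phi_j$ (so that $\Theta*h=\grad\Theta*H$ is handled in one stroke) rather than summing the $\Delta_j h$ for $j<0$ term by term, and phrases the high-frequency bound via the Sobolev embedding $\Hdot^{3/2}\hookrightarrow\Cdot^{1/2}$ rather than via the explicit Bernstein $L^2\to L^\infty$ step; both routes yield the same $2^{-j/2}\|h\|_{\Hdot^{3/2}}$ decay.
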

	
In order to prove propagation of regularity, we shall use the classical commutator estimate whose proof may be found in Klainerman and Majda \cite{km}. In our case, the control of $||\nabla f||_{L^\infty}, ||g||_{L^\infty}$ will come from the Besov regularity of $f$ and $g$ and \cref{inequality}.
\begin{prop}\label{commutator}
Assume $f,g \in H^s(\mathbb{R}^n)$. Then for any multi-index $\alpha$ with $|\alpha|=s$, we have 
$$ ||D^\alpha (fg)-f D^\alpha g ||_{L^2} \leq C(s) \left(||\nabla f||_{L^\infty}||\nabla^{(s-1)}g||_{L^2} + ||g||_{L^\infty}||\nabla^s f||_{L^2}\right). $$
\end{prop}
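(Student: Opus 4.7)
The plan is to expand the commutator by the Leibniz rule and bound each resulting mixed product via H\"{o}lder's inequality combined with Gagliardo--Nirenberg interpolation. Writing
\[
D^\alpha(fg) - f\, D^\alpha g \;=\; \sum_{0 < \beta \leq \alpha} \binom{\alpha}{\beta}\, D^\beta f \cdot D^{\alpha-\beta} g,
\]
it suffices to estimate each summand in $L^2$, indexed by $k = |\beta|$ with $1 \leq k \leq s$.

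For a fixed $k$, I apply H\"{o}lder's inequality with the conjugate pair
\[
p_k = \frac{2(s-1)}{k-1}, \qquad q_k = \frac{2(s-1)}{s-k}, \qquad \frac{1}{p_k}+\frac{1}{q_k}=\frac{1}{2}
\]
(with the obvious conventions when $k=1$ or $k=s$), and then invoke Gagliardo--Nirenberg to interpolate each factor between the endpoint norms appearing in the statement:
\[
||D^\beta f||_{L^{p_k}} \leq C\, ||\nabla f||_{L^\infty}^{1-a}\, ||\nabla^s f||_{L^2}^{a}, \qquad ||D^{\alpha-\beta}g||_{L^{q_k}} \leq C\, ||g||_{L^\infty}^{1-b}\, ||\nabla^{(s-1)}g||_{L^2}^{b},
\]
with $a = (k-1)/(s-1)$ and $b = (s-k)/(s-1)$. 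The exponents $p_k, q_k$ have been chosen precisely so that both Gagliardo--Nirenberg interpolations are simultaneously scale-invariant.

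Since $a + b = 1$, multiplying the two bounds yields a product of the form $(XW)^{1-a}(YZ)^a$ with $X = ||\nabla f||_{L^\infty}$, $Y = ||\nabla^s f||_{L^2}$, $Z = ||g||_{L^\infty}$, and $W = ||\nabla^{(s-1)}g||_{L^2}$. Young's inequality then collapses every such product into
\[
||\nabla f||_{L^\infty}\, ||\nabla^{(s-1)}g||_{L^2} + ||g||_{L^\infty}\, ||\nabla^s f||_{L^2},
\]
and summing over the finite collection of multi-indices $\beta$ with $1 \leq |\beta| \leq s$ produces the estimate with a constant $C(s)$. The only genuine obstacle here is exponent bookkeeping: one must verify that H\"{o}lder and the two Gagliardo--Nirenberg scaling conditions close at the prescribed exponents, and handle the degenerate endpoints $k=1$ and $k=s$ (where $a$ or $b$ vanishes and one H\"{o}lder exponent becomes infinite) directly as $||\nabla f||_{L^\infty}||\nabla^{(s-1)}g||_{L^2}$ and $||g||_{L^\infty}||\nabla^s f||_{L^2}$ respectively.
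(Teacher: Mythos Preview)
Your argument is correct and is precisely the classical Klainerman--Majda proof: the paper does not give its own argument for this proposition but simply cites \cite{km}, and the Leibniz expansion followed by H\"{o}lder, Gagliardo--Nirenberg interpolation at the scale-invariant exponents $p_k=2(s-1)/(k-1)$, $q_k=2(s-1)/(s-k)$, and Young's inequality is exactly what is done there. Nothing further is needed.
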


We will require the following lemmas concerning $\BMO$ functions to carry out the De Giorgi argument. Here we use $\BMO$ to refer to the space of functions with bounded mean oscillation equipped with the usual norm.  The first two lemmas are well-known properties of functions belonging to $\BMO$ (see \cite{Grafakos}).  The third follows from the John-Nirenberg inequality.  The fourth follows from the third in conjunction with a generalization of the Cauchy-Lipschitz theorem for $L^1(\LL)$ vector fields (see Theorem 3.7 in Chapter 3 of \cite{bcd}).   Integrals with a dash through the center are average values.  

\begin{prop}\label{bmofacts}
\begin{enumerate}
\item Let $Q$ denote any cube in $\mathbb{R}^n$.  For all $0<p<\infty$, there exists a finite constant $B_{p,n}$ such that 
$$ \sup_{Q} \left( \dashint_Q \left|f - \dashint_Q f\right|^p \right)^\frac{1}{p} \leq B_{p,n}||f||_{\BMO}.$$
\item Let $B_1$, $B_2$ be two balls in $\mathbb{R}^n$ such that there exists $A$ such that $$A^{-1}\diam (B_2) \leq \diam (B_1) \leq A \hspace{.03in}\diam (B_2)$$
and
$$ \dist(B_1,B_2) \leq A \hspace{.03in} \diam (B_1) .$$
Then there exists a constant $C(A)$ such that for any $u\in\BMO$
$$ \left|\dashint_{B_1}u-\dashint_{B_2}u\right| \leq C(A)||u||_{\BMO} .$$
\item Let $u\in \BMO$ and satisfy
$$ \sup_{x \in \mathbb{R}^n} \dashint_{B_1(x)} u(y)\,dy < \infty $$
and define 
$$ f(x) = \dashint_{B_1(x)}u(y)\,dy .$$
Then $f(x)$ is log-Lipschitz $(\LL)$ in $x$.
\item Let $u(t,x): [-2,0]\times \mathbb{R}^2 \rightarrow \mathbb{R}$ belong to $ L^\infty([-2,0];\BMO(\mathbb{R}^2)) \cap L^\infty([-2,0];L^2(\mathbb{R}^2)) $.  Then the following ordinary differential equation has a unique Lipschitz solution which satisfies the ODE almost everywhere in time.
$$  \left\{
       \begin{array}{@{}l@{\thinspace}l}
       \dot{\Gamma}(t) = \dashint_{B_1(\Gamma(t))} u(t,y)\,dy\\
       \Gamma(0) = 0\
       \end{array}  \right.  $$
\end{enumerate}
\end{prop}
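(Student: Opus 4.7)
My plan is to handle the four items in order, all ultimately rooted in the John--Nirenberg exponential distribution estimate and standard BMO chaining.

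For item (1), I would invoke John--Nirenberg,
$$|\{y \in Q : |f(y) - \dashint_Q f| > t\}| \leq c_1 |Q| e^{-c_2 t /\|f\|_{\BMO}},$$
and integrate $p t^{p-1}\,dt$ via the layer cake identity; this converts into a gamma-function factor times $(\|f\|_{\BMO})^p |Q|$, yielding the constant $B_{p,n}$ after dividing by $|Q|$ and taking the $p$-th root.

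For item (2), I would build a finite chain $B_1 = E_0, E_1, \ldots, E_N = B_2$ of at most $N = N(A)$ balls of comparable diameter (using $\dist(B_1,B_2) \leq A\,\diam(B_1)$ and the comparability of the two diameters), arranged so that each consecutive pair $(E_i, E_{i+1})$ sits inside a common ball $F_i$ of comparable diameter. For such containments,
$$\left|\dashint_{E_i} u - \dashint_{F_i} u\right| \leq \dashint_{E_i} \left|u - \dashint_{F_i} u\right| \leq \frac{|F_i|}{|E_i|}\|u\|_{\BMO},$$
and symmetrically for $E_{i+1} \subset F_i$; summing these telescoping differences delivers the claim with a constant $C(A)$.

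For item (3), fix $x,y$ with $r := |x-y|<1$ and observe that
$$f(x)-f(y) = \frac{1}{|B_1|}\left[\int_{B_1(x)\setminus B_1(y)} (u-c)\,dz - \int_{B_1(y)\setminus B_1(x)} (u-c)\,dz\right],$$
where the constant $c := \dashint_{B_3(x)} u$ can be freely subtracted since the two ``lunes'' have equal measure. Each lune $E$ satisfies $|E|\lesssim r$ and is contained in $B_3(x)$. The plan is then to bound each lune integral by layer cake plus John--Nirenberg,
$$\int_E |u-c|\,dz \leq |E|T + c_1|B_3(x)|\int_T^\infty e^{-c_2 t/\|u\|_{\BMO}}\,dt,$$
and optimize over $T$; the choice $T \sim \|u\|_{\BMO} \log(|B_3|/|E|) \sim \|u\|_{\BMO}\log(1/r)$ produces the crucial $r\log(1/r)$ modulus required for $\LL$ regularity. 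The uniform $L^\infty$ bound on $f$ follows directly from the hypothesis. For item (4), Cauchy--Schwarz combined with $u \in L^\infty_t L^2_x$ yields $\sup_{t,x}|\dashint_{B_1(x)} u(t,\cdot)| < \infty$ uniformly in $t$, so item (3) applies at every time and places the vector field $F(t,x) := \dashint_{B_1(x)} u(t,\cdot)$ in $L^\infty_t\LL_x$. The generalized Cauchy--Lipschitz theorem for log-Lipschitz drifts (Theorem 3.7 of \cite{bcd}) then delivers the unique absolutely continuous solution $\Gamma$; uniform boundedness of $F$ upgrades this to Lipschitz regularity and guarantees the ODE is satisfied almost everywhere.

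The principal obstacle is item (3): obtaining the sharp $r\log(1/r)$ modulus rather than mere H\"older continuity requires leveraging the full exponential John--Nirenberg tail with a careful optimization of the layer cake threshold $T$; a cruder $L^p$-based interpolation would only yield $r^{1-1/p}$, which is insufficient to conclude $\LL$ and hence insufficient to invoke the Cauchy--Lipschitz theorem needed in item (4).
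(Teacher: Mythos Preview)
Your proposal is correct and follows precisely the approach the paper indicates. The paper does not actually supply a proof of this proposition: it merely states that (1) and (2) are standard (referring to \cite{Grafakos}), that (3) follows from the John--Nirenberg inequality, and that (4) follows from (3) together with the generalized Cauchy--Lipschitz theorem for $L^1(\LL)$ vector fields (Theorem~3.7 in \cite{bcd}). Your detailed argument fills in exactly these hints, including the lune decomposition with the John--Nirenberg tail optimization for (3) and the $L^2_x$ bound giving uniform boundedness of the averaged drift in (4).
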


We shall make use of the following well-known trace estimate for Sobolev functions.  

\begin{lemma}\label{trace}
Suppose that $\nabla u\in L^2(\mathbb{R}_+^3)$.  Then $u|_{z=z_0}\in \Hdot^\frac{1}{2}(\mathbb{R}^2)$ with the trace estimate $||u(z_0,\cdot)||_{\Hdot^\frac{1}{2}(\mathbb{R}^2)} \leq ||\nabla u||_{L^2(\mathbb{R}_+^3)}$. 
\end{lemma}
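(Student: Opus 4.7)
The plan is to use the Fourier transform in the horizontal variables and reduce the trace bound to a one-dimensional weighted estimate on the vertical profile. Since $\dot H^{1/2}(\mathbb{R}^2)$ has a clean Fourier description, this approach should produce the inequality with sharp constant $1$.

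First, I would reduce to the case of a Schwartz function $u$ on $\overline{\mathbb{R}_+^3}$ by a standard density argument, so that $\hat u(z,\xi)$ (Fourier transform taken in $x$ only) is smooth in $z$ and decays as $z\to\infty$. Then by Plancherel,
\begin{equation*}
\|u(z_0,\cdot)\|_{\dot H^{1/2}(\mathbb{R}^2)}^2 \;=\; \int_{\mathbb{R}^2}|\xi|\,|\hat u(z_0,\xi)|^2\,d\xi.
\end{equation*}
The key pointwise identity comes from the fundamental theorem of calculus applied in $z$: for each fixed $\xi$,
\begin{equation*}
|\hat u(z_0,\xi)|^2 \;=\; -\int_{z_0}^{\infty}\partial_z|\hat u(z,\xi)|^2\,dz
\;=\;-2\,\mathrm{Re}\!\int_{z_0}^{\infty}\hat u(z,\xi)\,\overline{\partial_z\hat u(z,\xi)}\,dz,
\end{equation*}
where we used the vanishing at $+\infty$ secured by the density reduction.

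Next, I would multiply by $|\xi|$ and apply the elementary inequality $2ab\leq a^2+b^2$ to the integrand, splitting the factor $|\xi|$ symmetrically between $\hat u$ and $\partial_z\hat u$:
\begin{equation*}
|\xi|\,|\hat u(z_0,\xi)|^2 \;\leq\; \int_{z_0}^{\infty}\bigl(|\xi|^2|\hat u(z,\xi)|^2+|\partial_z\hat u(z,\xi)|^2\bigr)\,dz.
\end{equation*}
Integrating in $\xi$ and applying Plancherel in $x$ turns $|\xi|^2|\hat u|^2$ into $|\grad u|^2$ and $|\partial_z\hat u|^2$ into $|\partial_z u|^2$, giving
\begin{equation*}
\|u(z_0,\cdot)\|_{\dot H^{1/2}(\mathbb{R}^2)}^2 \;\leq\; \int_{z_0}^{\infty}\!\!\int_{\mathbb{R}^2}\bigl(|\grad u|^2+|\partial_z u|^2\bigr)dx\,dz \;\leq\; \|\nabla u\|_{L^2(\mathbb{R}_+^3)}^2.
\end{equation*}

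The main subtlety, such as it is, lies in the initial density step: since $u$ itself is only controlled through $\nabla u\in L^2(\mathbb{R}_+^3)$, one must either truncate $u$ in $z$ to secure the decay of $\hat u(z,\xi)$ as $z\to\infty$, or invoke the Deny--Lions theorem to select a representative (modulo constants) that is in an appropriate weighted $L^2$ space, then pass to the limit using the a priori bound just derived. The rest of the argument is purely algebraic manipulation in Fourier variables together with two applications of Plancherel's theorem.
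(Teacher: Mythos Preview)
Your argument is correct and is the standard proof of this trace inequality: Fourier transform in the horizontal variables, the fundamental theorem of calculus in $z$, and the elementary inequality $2ab\le a^2+b^2$ give exactly the bound with constant $1$, and your remarks on the density step are appropriate. The paper itself does not prove this lemma; it is simply stated as ``the following well-known trace estimate for Sobolev functions'' and used without further justification, so there is no alternative argument in the paper to compare against.
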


In \cite{pv}, the authors prove the existence of weak solutions to the inviscid quasi-geostrophic system.  The proof reformulates the system into a transport equation on $\nabla \Psi$ and relies on the following orthogonal decomposition of $L^2$ vector fields.  Given an $L^2$ vector field $u$, we can decompose $u$ as $u=\mathbb{P}_\nabla u + \mathbb{P}_{\curl}u$.  Here $\mathbb{P}_\nabla u = \nabla v$ for some scalar function $v$.  Furthermore, $\mathbb{P}_{\curl}u=\curl(w)$ for some $L^2$ vector field $w$ with $\mathbb{P}_{\curl}u \cdot \nu =0$.  If $u$ is smooth enough to define a trace, then $ u \cdot \nu =  \mathbb{P}_\nabla u \cdot \nu$. Note that the operator $\mathbb{P}_{\nabla}$ commutes with $\Dbar^\alpha$ but not $D^\alpha$.

\begin{prop}\label{Pgradient}
Let $\Psi$ be a smooth solution to $(\QG)$.  Then if $F$ solves the Neumann problem with $\Delta F=0$ and $\partial_\nu F = \lap \Psi|_{z=0}$, $\nabla \Psi$ satisfies the following equation, which we shall refer to as $(QG_\nabla)$:

$$ \partial_t(\nabla \Psi)+ \mathbb{P}_\nabla(\grad^\perp \Psi \cdot \grad(\nabla \Psi)) = \nabla F .$$ 
\end{prop}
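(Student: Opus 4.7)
The plan is to exploit the orthogonal Helmholtz decomposition recalled just above the statement and reduce the desired vectorial identity to verification of a pair of scalar equations. Since $\partial_t(\nabla\Psi)=\nabla(\partial_t\Psi)$ is already a pure gradient, both sides of the claimed equation lie in the range of $\mathbb{P}_\nabla$. Setting $u:=\grad^\perp\Psi\cdot\grad(\nabla\Psi)$, the decomposition characterizes $\mathbb{P}_\nabla u = \nabla v$, where (modulo additive constants) the scalar $v$ is determined by the Neumann problem
\[
\Delta v = \dive u \text{ in } \mathbb{R}^3_+, \qquad \partial_\nu v = u\cdot\nu \text{ on } \{z=0\}.
\]
It therefore suffices to show that $\tilde v := F - \partial_t\Psi$ satisfies this same Neumann problem; rearranging $\nabla\tilde v = \mathbb{P}_\nabla u$ then gives $(\QG_\nabla)$.

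For the interior identity, I would expand
\[
\dive u \;=\; \sum_{i,k}\partial_i\!\left[(\grad^\perp\Psi)_k\,\partial_k\partial_i\Psi\right] \;=\; \sum_{i,k}\partial_i(\grad^\perp\Psi)_k\cdot\partial_k\partial_i\Psi \;+\; \grad^\perp\Psi\cdot\grad(\Delta\Psi),
\]
where I have used the fact that $(\grad^\perp\Psi)_1=0$ to replace $\nabla$ by $\grad$ in the second term. For each fixed $i$, the commutator term collapses because $\partial_i(\grad^\perp\Psi)=\grad^\perp(\partial_i\Psi)$, so the $k$-contraction is $\grad^\perp(\partial_i\Psi)\cdot\grad(\partial_i\Psi)=0$, the pointwise orthogonality of a horizontal perpendicular gradient to itself. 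Hence $\dive u = \grad^\perp\Psi\cdot\grad(\Delta\Psi)$. On the other side, $\Delta\tilde v = \Delta F - \partial_t(\Delta\Psi) = \grad^\perp\Psi\cdot\grad(\Delta\Psi)$ by the definition of $F$ and the interior equation of $(\QG)$, matching $\dive u$.

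For the boundary identity, the outward normal on $\{z=0\}$ is $-\hat z$ and $(\nabla\Psi)_1=\partial_z\Psi$, so $u\cdot\nu|_{z=0} = -\grad^\perp\Psi\cdot\grad(\partial_z\Psi)|_{z=0}$. On the other side, using $\dnu = -\partial_z\Psi|_{z=0}$ and the boundary equation of $(\QG)$,
\[
\partial_\nu(\partial_t\Psi)|_{z=0} = -\partial_t\partial_z\Psi|_{z=0} = \partial_t(\dnu) = -\grad^\perp\Psi\cdot\grad(\dnu) + \lap\Psi|_{z=0} = \grad^\perp\Psi\cdot\grad(\partial_z\Psi)|_{z=0} + \lap\Psi|_{z=0}.
\]
Subtracting from $\partial_\nu F|_{z=0}=\lap\Psi|_{z=0}$ yields $\partial_\nu\tilde v = -\grad^\perp\Psi\cdot\grad(\partial_z\Psi)|_{z=0} = u\cdot\nu|_{z=0}$, as required.

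The main obstacle, beyond careful sign bookkeeping between $\partial_\nu$, $-\partial_z$, and $\dnu$, is ensuring that the commutator term in the computation of $\dive u$ actually vanishes; this is exactly where the structure of $\grad^\perp\Psi$ as the horizontal perpendicular gradient of a stream function is essential. Once the interior and boundary identifications are in place, uniqueness (up to an additive constant) of the scalar potential in the Helmholtz decomposition for sufficiently smooth and decaying fields identifies $\nabla\tilde v$ with $\mathbb{P}_\nabla u$ and closes the argument.
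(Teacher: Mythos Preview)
Your proof is correct. The paper does not actually prove this proposition---it is stated with a reference to \cite{pv} and followed only by the one-line remark that the converse (from $(QG_\nabla)$ back to $(\QG)$) follows ``by taking both the divergence and the trace.'' Your argument is precisely the forward version of that same computation: you identify $\mathbb{P}_\nabla u$ by solving the Neumann problem for its scalar potential, and then verify that $F-\partial_t\Psi$ satisfies both the interior equation $\Delta v=\dive u$ (which reduces to the vorticity transport equation) and the boundary condition $\partial_\nu v=u\cdot\nu$ (which reduces to the boundary equation). The key structural observation---that the commutator term in $\dive u$ vanishes because $\grad^\perp(\partial_i\Psi)\cdot\grad(\partial_i\Psi)=0$---is exactly what makes the reformulation work, and your sign bookkeeping with $\partial_\nu=-\partial_z$ and $\dnu=-\partial_z\Psi|_{z=0}$ is accurate.
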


For $\Psi$ a smooth solution to $(QG_\nabla)$, taking both the divergence and the trace shows that $\Psi$ also solves $(QG)$.  Finally, we state a local existence theorem and the necessary \textit{a priori} estimates.  For a proof of the following local existence theorem, one can employ the standard semigroup approach found in Kato \cite{Kato}.

\begin{prop}\label{localexistence}
For any initial data $\nabla\Psi_0 \in H^3(\mathbb{R}_+^3)$ for $(\QG)$, there exists a time interval $[0,\bar{T}]$, where $\bar{T}$ depends only on the size of $||\nabla\Psi_0||_{H^3(\mathbb{R}_+^3)}$, such that $(\QG)$ has a solution $\nabla \Psi \in L^\infty([0,T];H^3(\mathbb{R}_+^3))$.  
\end{prop}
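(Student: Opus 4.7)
The plan is to follow the standard vanishing-viscosity/Kato-semigroup approach referenced by the authors, working with the reformulation $(QG_\nabla)$ from \cref{Pgradient}. First I would regularize by adding $\varepsilon \Delta$ to $(QG_\nabla)$ with compatible boundary conditions, turning the problem into a semilinear parabolic system on $\mathbb{R}_+^3$. For each fixed $\varepsilon > 0$ the linear part generates an analytic semigroup on $H^3(\mathbb{R}_+^3)$, and both the quadratic nonlinearity $\mathbb{P}_\nabla(\grad^\perp \Psi_\varepsilon \cdot \grad \nabla \Psi_\varepsilon)$ and the boundary forcing $\nabla F_\varepsilon$ are locally Lipschitz on balls in $H^3$; a Picard iteration in Kato's framework \cite{Kato} then yields a unique smooth $\nabla \Psi_\varepsilon$ on some short interval $[0,T_\varepsilon]$.

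Next I would derive an $H^3$ energy estimate uniform in $\varepsilon$. Applying $\Dbar^\alpha$ for tangential multi-indices $|\alpha| \leq 3$ to $(QG_\nabla)$ and pairing with $\Dbar^\alpha \nabla \Psi_\varepsilon$ in $L^2(\mathbb{R}_+^3)$, the projection $\mathbb{P}_\nabla$ disappears because the test function is already a gradient, and the divergence-free structure of $\grad^\perp \Psi_\varepsilon$ kills the highest-order piece of the transport term. The remaining commutator $[\Dbar^\alpha, \grad^\perp \Psi_\varepsilon \cdot \grad] \nabla \Psi_\varepsilon$ is controlled via \cref{commutator} and Sobolev embedding, producing a contribution of order $\|\nabla \Psi_\varepsilon\|_{H^3}^3$; the viscous dissipation is non-positive and can be dropped. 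For the forcing, integration by parts using $\Delta F_\varepsilon = 0$ in the bulk and $\partial_\nu F_\varepsilon = \lap \Psi_\varepsilon|_{z=0}$ on the boundary converts the term into
\begin{equation*}
\int_{\mathbb{R}^2} \Dbar^\alpha \Psi_\varepsilon|_{z=0} \cdot \lap \Dbar^\alpha \Psi_\varepsilon|_{z=0} \;=\; -\int_{\mathbb{R}^2} \left|\grad \Dbar^\alpha \Psi_\varepsilon|_{z=0}\right|^2 \;\leq\; 0,
\end{equation*}
which is harmless. Multi-indices involving $\partial_z$ are reduced to the tangential case using $\Delta F_\varepsilon = 0$ and $\partial_z^2 \Psi_\varepsilon = \Delta \Psi_\varepsilon - \lap \Psi_\varepsilon$ to trade normal derivatives for tangential ones, with the resulting lower-order commutators controlled by the trace estimate in \cref{trace}. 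Summed over $|\alpha| \leq 3$, this yields $\frac{d}{dt}\|\nabla \Psi_\varepsilon\|_{H^3}^2 \leq C \|\nabla \Psi_\varepsilon\|_{H^3}^3$, which integrates to a time $\bar{T}$ of existence depending only on $\|\nabla \Psi_0\|_{H^3}$.

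The main obstacle is precisely this boundary-forcing estimate: a naive bound gives $\nabla F_\varepsilon \in H^2(\mathbb{R}_+^3)$ only, since $\lap \Psi|_{z=0} \in H^{3/2}(\mathbb{R}^2)$ is one derivative short of what a direct Gronwall closure demands. Recognizing the hidden negative sign above---which reflects the same critical-SQG dissipative structure exploited in the rest of the paper---is what makes the estimate close. Once this is done, passing $\varepsilon \to 0$ is routine: weak-$*$ compactness of $\{\nabla \Psi_\varepsilon\}$ in $L^\infty([0,\bar{T}];H^3(\mathbb{R}_+^3))$ combined with Aubin-Lions strong compactness of $\grad^\perp \Psi_\varepsilon$ allows identification of the nonlinear limit, and a Gronwall argument on the difference of two candidate solutions yields uniqueness on the constructed interval.
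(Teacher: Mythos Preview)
The paper does not actually prove this proposition: immediately before the statement it says only that ``one can employ the standard semigroup approach found in Kato \cite{Kato}.'' Your sketch is therefore already more detailed than what the paper offers, and the structural observations you make---the negative sign coming from the boundary forcing after integration by parts, and the reduction of normal derivatives via $\partial_z^2 = \Delta - \lap$---are precisely the mechanisms the paper uses later in \cref{apriori}(1), \cref{flat}, and \cref{full}. In that sense your proposal is consistent with, and anticipates, the energy machinery of the rest of the paper.

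One point is underspecified. The vanishing-viscosity layer you add is not part of Kato's original framework and introduces a boundary issue you do not resolve: on $\mathbb{R}_+^3$ you must choose a boundary condition for $\varepsilon\Delta$, and you must say how the added diffusion interacts with the dynamical boundary equation for $\partial_\nu\Psi$. For the viscous term to be sign-definite in the $H^3$ estimate you need, e.g., homogeneous Neumann data for $\nabla\Psi_\varepsilon$, but that is not obviously compatible with the evolving Neumann trace in $(\QG)$. Kato's method, as cited, works directly on the quasilinear hyperbolic system with mollified data and commutator estimates, bypassing this difficulty entirely. Either dropping the $\varepsilon\Delta$ regularization in favor of straight mollification, or spelling out the compatible boundary setup, would close the gap; as written, the phrase ``with compatible boundary conditions'' hides the only nontrivial step in your construction.
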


The following proposition contains the a priori estimates which we shall use to prove global existence.  Each estimate depends only on the size of the norm of the initial data $||\nabla \Psi_0||_{H^3(\mathbb{R}_+^3)}$.  The strategy of the proof will be to obtain a differential equality on the norm $||\nabla\Psi(t)||_{H^3(\mathbb{R}_+^3)}$ which depends only on the a priori regularity.  Then we apply a continuation principle.  Specifically, assume $(QG)$ has a smooth solution $\nabla\Psi\in L^\infty([0,T-\epsilon];H^3(\mathbb{R}^3_+))$ for every $\epsilon>0$, but that the solution cannot be continued beyond time $T$. The differential inequality will show that in fact $||\nabla\Psi||_{H^3(\mathbb{R}_+^3)}$ is uniformly bounded on $[0,T]$.  Then applying the local existence theorem shows that the solution can be continued past $T$.  In this way, we show that the system admits a unique classical solution for all time.  

\begin{prop}\label{apriori}  Let $\nabla\Psi \in L^\infty([0,T];H^3(\mathbb{R}_+^3))$ be a smooth solution to (QG) on the interval $[0,T]$.  Then there exists a universal $C$ independent of $T$ and $\Psi$ such that $\Psi$ satisfies the following for all $t\in[0,T]$:
\begin{enumerate}
\item $\frac{1}{2} ||\nabla \Psi(t)||_{L^2(\mathbb{R}_+^3)}^2 + ||\grad \Psi(t)|_{z=0} ||_{L^2([0,t];L^2(\mathbb{R}^2))}^2 \leq  || \nabla\Psi_0 ||_{L^2(\mathbb{R}_+^3)}^2$.
\item For all $p\in[2,\infty]$, $||\Delta\Psi(t)||_{L^p(\mathbb{R}_+^3)} = ||\Delta\Psi_0||_{L^p(\mathbb{R}_+^3)} \leq C ||\nabla\Psi_0||_{H^3(\mathbb{R}_+^3)}$.
\item $ ||(-\lap)^\frac{3}{4} \Psi_2(t) |_{z=0} ||_{L^2(\mathbb{R}^2)} \leq  ||\Delta\Psi_0||_{L^2(\mathbb{R}_+^3)} \leq C ||\nabla\Psi_0||_{H^3(\mathbb{R}_+^3)}$.
\item For $z=z_0\geq 0$, $||\nabla\Psi_2(t)|_{z=z_0}||_{\Bdot_{\infty,\infty}^1(\mathbb{R}^2)} \leq || \Delta\Psi_0 ||_{L^\infty(\mathbb{R}_+^3)} \leq C|| \nabla\Psi_0 ||_{H^3(\mathbb{R}_+^3)}$.
\item $||(-\lap)^\frac{3}{4} \Psi_2(t) |_{z=0}||_{C^\frac{1}{2}(\mathbb{R}^2)} \leq C ||\nabla\Psi_0||_{H^3(\mathbb{R}_+^3)}$.
\item $||\dnu(t)||_{L^2(\mathbb{R}^2)} \leq C|| \nabla\Psi_0 ||_{H^3(\mathbb{R}_+^3)}(1+t) $.
\item For $p\in[4,\infty]$ and $z_0\geq 0$, $||\nabla\Psi_2(t)|_{z=z_0}||_{L^p} \leq C||\nabla\Psi_0||_{H^3(\mathbb{R}_+^3)}  $.
\end{enumerate}
\end{prop}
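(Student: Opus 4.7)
The plan is to establish the seven bounds in sequence, using the decomposition $\Psi=\Psi_1+\Psi_2$ together with the explicit Fourier representation
$$\hat\Psi_2(z,\xi) \;=\; -\frac{1}{2|\xi|}\int_0^\infty \bigl(e^{-|\xi||z-z'|}+e^{-|\xi|(z+z')}\bigr)\hat g(z',\xi)\,dz',$$
where $g = \Delta\Psi$; this Green's function arises by solving $(\partial_{zz}-|\xi|^2)\hat\Psi_2=\hat g$ on $[0,\infty)$ with $\partial_z\hat\Psi_2(0,\xi)=0$ and decay at infinity.

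For $(1)$, I pair the interior equation with $-\Psi$ and integrate over $\mathbb{R}_+^3$. The transport term vanishes because $\grad^\perp\Psi$ is divergence-free with zero vertical component, while integration by parts in $z$ produces a boundary contribution which, after substituting the boundary equation and using $\grad^\perp\Psi\cdot\grad\Psi\equiv 0$, yields the identity. For $(2)$, $\Delta\Psi$ is transported by the divergence-free field $\grad^\perp\Psi$, so its $L^p$ norms are preserved in time at each $z$, and the Sobolev embedding $H^2(\mathbb{R}^3)\hookrightarrow L^\infty$ closes the comparison to $\|\nabla\Psi_0\|_{H^3}$ for $p\in[2,\infty]$. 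For $(3)$, setting $z=0$ gives $\hat\Psi_2(0,\xi) = -|\xi|^{-1}\int_0^\infty e^{-|\xi|z'}\hat g(z',\xi)\,dz'$, and Cauchy-Schwarz on the $z'$ integral with Plancherel bounds $\|(-\lap)^{3/4}\Psi_2|_{z=0}\|_{L^2}$ by $\|g\|_{L^2(\mathbb{R}_+^3)}$, which is controlled by $(2)$.

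For $(4)$, the Neumann condition makes the even reflection $\tilde\Psi_2$ across $z=0$ into a $C^1$ function on $\mathbb{R}^3$ with $\Delta\tilde\Psi_2=\tilde g$; hence $\nabla\tilde\Psi_2 = \nabla(-\Delta)^{-1}(-\tilde g)$ is an order $-1$ singular integral, and $L^\infty\hookrightarrow\Bdot_{\infty,\infty}^0$ combined with \cref{corollaries} places $\nabla\tilde\Psi_2$ in $\Bdot_{\infty,\infty}^1(\mathbb{R}^3)$; the restriction inequality \cref{equivalences}(3) descends the bound to $\mathbb{R}^2$. For $(5)$, the H\"older seminorm comes from writing $(-\lap)^{3/4}\Psi_2|_{z=0} = (-\lap)^{1/4}R\,\grad\Psi_2|_{z=0}$ and applying \cref{corollaries} and \cref{equivalences}(2) to map $\Bdot_{\infty,\infty}^1\to\Bdot_{\infty,\infty}^{1/2}\simeq\Cdot^{1/2}$; the $L^\infty$ piece comes from splitting the $z'$ integral in the Fourier formula and bounding the kernel with symbol $|\xi|^{1/2}e^{-|\xi|z'}$ in $L^1$ (respectively $L^2$) by $Cz'^{-1/2}$ (respectively $Cz'^{-3/2}$) via scaling, then pairing with $\|g\|_{L^\infty}$ on $z'\leq 1$ and with $\|g\|_{L^2}$ on $z'>1$. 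Estimate $(7)$ uses a near/far split in the 3D convolution with the kernel $\sim|x|^{-2}$ of $\nabla(-\Delta)^{-1}$ for the $L^\infty$ bound, and a Cauchy-Schwarz in the Green's function representation to place $\nabla\Psi_2(z_0,\cdot)$ in $\Hdot^{1/2}(\mathbb{R}^2)$ for $L^4$ (via $\Hdot^{1/2}(\mathbb{R}^2)\hookrightarrow L^4$); H\"older interpolation fills in $p\in[4,\infty]$.

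The main obstacle is $(6)$, because the $L^2$ norm of $\partial_\nu\Psi$ is not conserved and must be controlled against a genuine forcing. Pairing the boundary equation with $\partial_\nu\Psi=\partial_\nu\Psi_1$ yields $\tfrac{1}{2}\tfrac{d}{dt}\|\partial_\nu\Psi\|_{L^2}^2 = \int(\lap\Psi)\,\partial_\nu\Psi$ since the drift vanishes. Splitting $\lap\Psi|_{z=0} = \lap\Psi_1+\lap\Psi_2$, harmonicity of $\Psi_1$ together with the Poisson formula $\hat\Psi_1(z,\xi)=|\xi|^{-1}e^{-|\xi|z}\widehat{\partial_\nu\Psi}$ identifies $\lap\Psi_1|_{z=0}=-(-\lap)^{1/2}\partial_\nu\Psi$, producing the dissipative term $-\|(-\lap)^{1/4}\partial_\nu\Psi\|_{L^2}^2$. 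For the $\Psi_2$ contribution, fractional integration by parts rewrites $\int\lap\Psi_2\,\partial_\nu\Psi$ as $-\int(-\lap)^{3/4}\Psi_2\cdot(-\lap)^{1/4}\partial_\nu\Psi$, which Young's inequality splits so that half is absorbed into the dissipation and the other half is bounded by $(3)$. The resulting $\tfrac{d}{dt}\|\partial_\nu\Psi\|_{L^2}^2\lesssim \|\nabla\Psi_0\|_{H^3}^2$ integrates to the claimed linear-in-time bound; the delicate point is that the Dirichlet-to-Neumann dissipation from $\Psi_1$ just barely offsets the forcing from $\Psi_2$ once $(3)$ is invoked.
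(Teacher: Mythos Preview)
Your proof is correct and, for the crucial item (6), follows exactly the same route as the paper: pair the boundary equation with $\partial_\nu\Psi$, split $\lap\Psi=\lap\Psi_1+\lap\Psi_2$, use the Dirichlet-to-Neumann identity $\lap\Psi_1|_{z=0}=-(-\lap)^{1/2}\partial_\nu\Psi$ to produce the dissipation, and absorb the $\Psi_2$ contribution via $\Hdot^{-1/2}$--$\Hdot^{1/2}$ duality together with (3).

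The methodological difference lies in the auxiliary $\Psi_2$ estimates. You work throughout with the explicit Neumann Green's function on the Fourier side and extract (3), the $L^\infty$ part of (5), and (7) by direct kernel bounds (Cauchy--Schwarz in $z'$, scaling of the symbol $|\xi|^{1/2}e^{-|\xi|z'}$, near/far splitting of the $|x|^{-2}$ kernel). The paper instead argues once and for all via the even reflection $\tilde\Psi_2(z,x)=\Psi_2(|z|,x)$: since $\Delta\tilde\Psi_2$ is the even reflection of $\Delta\Psi$, Riesz transforms place $\nabla^2\tilde\Psi_2$ in $L^2(\mathbb{R}^3)$ and $\nabla\tilde\Psi_2$ in $\Bdot^1_{\infty,\infty}(\mathbb{R}^3)$, after which (3) is just the trace lemma and (7) follows from the inclusion $\Hdot^{1/2}\cap\Bdot^1_{\infty,\infty}\hookrightarrow L^\infty$ (low frequencies controlled by $\Hdot^{1/2}$ via Bernstein, high frequencies by $\Bdot^1_{\infty,\infty}$). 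For (5), the paper avoids any kernel computation by observing that $L^2\cap\Cdot^{1/2}\subset L^\infty$. Your explicit approach is self-contained and gives sharper constants; the paper's reflection approach is shorter once the abstract machinery (\cref{corollaries}, \cref{equivalences}, \cref{trace}) is in place and makes the uniformity in $z_0$ for (4) and (7) more transparent. For (1) you test the interior equation against $-\Psi$ rather than $(QG_\nabla)$ against $\nabla\Psi$; both routes reach the same energy identity after the boundary terms are handled with $\grad^\perp\Psi\cdot\grad\Psi=0$.
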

\begin{proof}
\begin{enumerate}
\item We multiply $(QG_\nabla)$ by $\nabla\Psi$ and integrate.  By the properties of the projection operator $\mathbb{P}_\nabla$ and the divergence-free and stratified nature of the flow, $$ \int_{\mathbb{R}_+^3} \mathbb{P}_\nabla(\grad^\perp \Psi \cdot \grad(\nabla \Psi)) \cdot \nabla \Psi = \int_{\mathbb{R}_+^3} \grad^\perp \Psi \cdot \grad(\nabla \Psi) \cdot \nabla \Psi =  0 .$$
Therefore we obtain
\begin{align*}
\frac{1}{2}\frac{\partial}{\partial t} \int_{\mathbb{R}_+^3} |\nabla \Psi|^2 = \int_{\mathbb{R}_+^3} \nabla \Psi \nabla F &= -\int_{\mathbb{R}_+^3} \Psi \Delta F + \int_{\mathbb{R}^2} \Psi|_{z=0} \partial_\nu F \\
&= \int_{\mathbb{R}^2} \Psi|_{z=0} \lap \Psi|_{z=0} \\
&= - \int_{\mathbb{R}^2} |\grad \Psi|_{z=0}|^2
\end{align*}
Integrating in time then gives the claim.  
\item The estimate follows immediately from the transport equation for $\Delta\Psi$, the divergence free property of the flow, and Sobolev embedding.
\item We define $\tilde{\Psi}_2(z,x) = \Psi_2(|z|,x)$. Note that $\Delta\tilde{\Psi}_2(z,x) = \Delta\Psi_2(|z|,x)$ and $\nabla\tilde{\Psi}_2(z,x)=-\nabla\Psi_2(|z|,x)$. Applying the Riesz transforms to $\Delta\tilde{\Psi}_2$ and using (2) and parts (2) and (3) of \cref{corollaries} shows that $\nabla^2\tilde{\Psi}_2\in L^2(\mathbb{R}^3)$, and therefore $\nabla^2 \Psi_2 \in L^2(\mathbb{R}_+^3)$.  Applying \cref{trace} shows that 
\begin{equation}\label{eq:nabla}
\nabla\Psi_2 |_{z=0} \in \Hdot^\frac{1}{2}(\mathbb{R}^2)
\end{equation}
and it follows immediately from the Fourier characterization of $\Hdot^\frac{1}{2}(\mathbb{R}^2)$ that $\partial_x\Psi_2, \partial_y\Psi_2 \in \Hdot^\frac{1}{2}(\mathbb{R}^2)$ implies $(\lap)^\frac{3}{4}\Psi_2 |_{z=0} \in L^2(\mathbb{R}^2)$. 
\item We use again that $\Delta\tilde{\Psi}_2(z,x) = \Delta\Psi_2(|z|,x)$ together with parts (2) and (4) of \cref{corollaries} to obtain $\nabla\tilde{\Psi}_2|_{z=z_0} \in \Bdot_{\infty,\infty}^1(\mathbb{R}_+^3)$.  Using part (3) of \cref{equivalences} with $s=1$, $n=3$, and $k=2$ and recalling that $\nabla\tilde{\Psi}_2(z,x)=\nabla\Psi_2(|z|,x)$, we have $\nabla\Psi_2|_{z=z_0}\in \Bdot_{\infty,\infty}^1(\mathbb{R}^2)$.  
\item To obtain (5), we can use (3) and (4).  Using (4), \cref{equivalences}, and the Riesz transform shows that $-(\lap)^\frac{3}{4}\Psi_2 \in \Cdot^\frac{1}{2}(\mathbb{R}^2)$.  To show that $-(\lap)^\frac{3}{4}\Psi_2$ actually belongs to the inhomogenous space $C^\frac{1}{2}$, we must show that $-(\lap)^\frac{3}{4}\Psi_2\in L^\infty(\mathbb{R}^2)$.  This follows from (3) and the $\Cdot^\frac{1}{2}$ bound.  
\item We take the equation on the boundary $z=0$, multiply by $\dnu(t)$, and apply (3), yielding
\begin{align*}
\frac{1}{2}\frac{\partial}{\partial t} \int_{\mathbb{R}^2} |\dnu(t)|^2 &= \int_{\mathbb{R}^2} \lap \Psi(t) \dnu(t) \\
&= \int_{\mathbb{R}^2} \lap \Psi_1(t) \dnu_1(t) + \int_{\mathbb{R}^2} \lap \Psi_2(t) \dnu_1(t)\\
&= - \int_{\mathbb{R}^2} |\half \dnu_1(t)|^2 + \int_{\mathbb{R}^2} \lap\Psi_2(t)\dnu_1(t)  \\
&\leq -||\dnu_1(t)||_{\Hdot^\frac{1}{2}(\mathbb{R}^2)}^2 + ||\lap \Psi_2(t) ||_{\Hdot^{-\frac{1}{2}}(\mathbb{R}^2)} ||\dnu_1(t)||_{\Hdot^\frac{1}{2}(\mathbb{R}^2)} \\
&\leq -||\dnu_1(t)||_{\Hdot^\frac{1}{2}(\mathbb{R}^2)}^2 + ||(-\lap)^\frac{3}{4} \Psi_2(t) ||_{L^2(\mathbb{R}^2)}^2 + ||\dnu_1(t)||_{\Hdot^\frac{1}{2}(\mathbb{R}^2)}^2 \\
&\leq C||\nabla\Psi_0||_{H^3(\mathbb{R}_+^3)}
\end{align*}
Integrating in time finishes the proof.  
\item The estimate follows from \cref{eq:nabla}, Sobolev embedding, (4), and interpolation.  
\end{enumerate}
\end{proof}

Finally, let us remark that constants $C$ may change from line to line; if we wish to keep track of dependencies, we will write $C(\cdot)$.

\section{H\"{o}lder Regularity on the Boundary} 

Let us examine $\dnu_1=\dnu$.  We have that $\dnu_1$ satisfies the equation
$$\partial_t(\dnu_1) + \grad^\perp \Psi|_{z=0} \cdot \grad (\dnu_1) + \half (\dnu_1) =  \lap \Psi_2|_{z=0}.$$
Recalling that applying a continuation principle will require uniform in time regularity bounds on $\nabla\Psi$, we begin by showing the following regularity estimate on $\dnu$.
\begin{lemma}[H\"{o}lder Estimate]\label{holder}
If $\nabla\Psi \in L^\infty([0,T];H^3(\mathbb{R}_+^3))$ solves (QG) on $[0,T]$, there exists $r>0$, $C>0$ depending only on $||\nabla\Psi_0||_{H^3(\mathbb{R}_+^3)}$ such that the following holds.   The solution $\dnu$ to the boundary equation
$$\partial_t (\dnu) + \grad^\perp \Psi \cdot \grad (\dnu) =  \lap \Psi$$ satisfies $ \dnu \in C^r([0,T]\times \mathbb{R}^2)$ with $||\dnu||_{C^r([0,T]\times\mathbb{R}^2)} < C$.  
\end{lemma}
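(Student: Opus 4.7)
The plan is to adapt the De Giorgi technique of \cite{cv,v} to the forced, critically dissipative drift equation satisfied by $\dnu_1=\dnu$, namely
$$ \partial_t(\dnu_1) + v\cdot \grad(\dnu_1) + \half(\dnu_1) = g,$$
where $v=\grad^\perp \Psi|_{z=0}$ is divergence-free and $g=\lap\Psi_2|_{z=0}$. The a priori ingredients supplied by \cref{apriori} are: $\dnu_1\in L^\infty_tL^2_x$ by (6); a drift bounded in a $\BMO$ sense, since its $\Psi_1$ part is a Riesz transform of $\dnu_1$ while its $\Psi_2$ part is $L^\infty$ by (4) and (7); and a forcing $g=-(-\lap)^{1/4}((-\lap)^{3/4}\Psi_2|_{z=0})$ tamed by the $C^{1/2}$ bound in (5).

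First I would realize $\half$ as the Dirichlet-to-Neumann map of the harmonic extension to $\mathbb{R}^3_+$, turning the fractional dissipation into a local Dirichlet energy, and obtain a global $L^\infty$ bound on $\dnu_1$ by the usual first De Giorgi lemma at the global scale. Fixing a base point $(t_0,x_0)$, I then rescale parabolically in a way consistent with $\half$ (i.e. $t\mapsto \lambda t$, $x\mapsto \lambda x$) to normalize to $|\dnu_1|\le 1$ on a unit cylinder, and aim to show that on a smaller cylinder $\sup \dnu_1-\inf \dnu_1\le 2-\mu$ for a universal $\mu>0$. To handle the $\BMO$ drift under translation and rescaling, I would follow \cite{cv} and pass to Lagrangian coordinates along the log-Lipschitz flow $\Gamma$ guaranteed by \cref{bmofacts}(4), built from the ball-average $\bar v(t,\cdot)=\dashint_{B_1(\cdot)} v(t,\cdot)$; the shifted drift $v-\bar v$ then has a uniform $\BMO$ norm, which is exactly what the De Giorgi machinery can absorb.

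The two-step De Giorgi scheme then proceeds as follows. The first lemma tests truncations $(\dnu_1-C_k)_+$ with $C_k\nearrow 1$ against a cutoff in the extension; the divergence-free drift cancels, and the remaining terms (including the $H^{-2}$-type couplings of $\dnu_1$ with the truncation and with $g$) are controlled via \cref{duality}, yielding a nonlinear recurrence which forces the truncations to vanish provided the initial $L^2$-mass on the level is small. The second (isoperimetric) lemma rules out simultaneous concentration of $\dnu_1$ near the upper and lower normalized bounds: if the intermediate level set $\{0<r^2<\tfrac14\phi^2\}$ were empty for an $r$ built from a suitable truncation of $\dnu_1$, then by \cref{bbm} either $r\equiv 0$ or $r^2\ge \tfrac14\phi^2$, the latter contradicting the opposite concentration. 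Combining the two lemmas produces a quantified oscillation decrement which iterates dyadically in scale around every base point and yields the claimed $C^r$ estimate with $r,C$ depending only on $\|\nabla\Psi_0\|_{H^3(\mathbb{R}_+^3)}$. The hard part, beyond careful bookkeeping, is the low regularity of the drift: a part of $v$ is not a Riesz transform of $\dnu_1$, so its $\BMO$ control rests on \cref{apriori}(4), and the Lagrangian change of variables must be carried out uniformly through every step of the scale iteration so that the $\BMO$ norm of $v-\bar v$ does not degenerate.
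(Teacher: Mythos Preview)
Your outline follows the same De Giorgi strategy as the paper and would succeed, but several technical choices differ from the paper's implementation. First, the paper does \emph{not} use the harmonic extension; it works directly with $\half$ via the C\'ordoba--C\'ordoba pointwise inequality to get the local energy estimates, and handles the forcing not through \cref{duality} but through a tailored estimate (the paper's Proposition~3.2) which exploits $\|(-\lap)^{-1/4}g\|_{L^\infty_tC^{1/2}_x}$ from \cref{apriori}(5) by splitting the Gagliardo seminorm integral at scale $1$. Second, after the global $L^\infty$ bound the paper uses that $u-\dashint_{B_1}u$ is uniformly in $L^\infty_tL^4(B_2)$ (not merely $\BMO$), which is what the local De Giorgi lemmas actually need; the barrier $c(x)=(|x|^{1/4}-2)_+$ is introduced precisely so that $u\cdot\grad c$ can be handled with this $L^4$ bound. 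Third, and most importantly, your description of the isoperimetric step is underspecified: you invoke \cref{bbm} as if one could directly assume the intermediate set is empty, but the paper obtains this only in the limit of a contradiction sequence, and passing to that limit requires Aubin--Lions compactness on $v_j^2$. It is \emph{there} that \cref{duality} enters, to show $\partial_t v_j^2\in L^1_tH^{-2}_x$; this is the most delicate part of the argument and you should not leave it implicit. Finally, the iteration is not a single Lagrangian change of variables but a fresh recentering $\Gamma_{k+1}$ at each dyadic scale, with the dilation factor $K$ chosen small enough that the barrier $c_\epsilon$ and the forcing survive uniformly in $k$.
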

The steps of the De Giorgi argument are written for equations of the type
$$\partial_t \theta + u \cdot \grad \theta + \half \theta = f.$$
We will apply the De Giorgi lemmas to $\theta=\dnu$ to obtain \cref{holder}.  Estimates for $\theta$, $u$, and $f$ will come from \cref{apriori}; in particular, they will only depend on $||\nabla\Psi_0||_{H^3(\mathbb{R}_+^3)}$. We begin with the first De Giorgi lemma which will give an estimate on $||\theta||_{L^\infty([0,T]\times\mathbb{R}^2)}$.  Let us remark that all parts of the De Giorgi argument will be applied on the interval for which (QG) has a solution $\nabla \Psi \in L^\infty([0,T];H^3(\mathbb{R}_+^3))$.  From the trace, we have $\dnu \in L^\infty([0,T];H^\frac{5}{2}(\mathbb{R}_+^3))$, which justifies the calculations.  We begin with a technical proposition which we shall use several times to estimate the forcing term.

\begin{prop}\label{RHS}
Suppose that $||-(\bar{\Delta})^{-\frac{1}{4}}g(t,x)||_{L^\infty([-2,0];C^\frac{1}{2}(\mathbb{R}^2))} \leq M$, and $\omega(t,x)$ satisfies the following:
\begin{enumerate}
\item $\omega(t,x)\in L^\infty([-2,0];L^2\cap L^1(\mathbb{R}^2))\cap L^2([-2,0];\Hdot^\frac{1}{2}(\mathbb{R}^2))$
\item $\omega(t,x) \geq 0$ for $(t,x) \in [-2,0]\times\mathbb{R}^2$
\item  For each time $t$, $|\{(t,x): \omega(t,x)> 0\}|<\infty$
\end{enumerate}
Then 
$$ \int_{\mathbb{R}^2} g(t,x) \omega(t,x) \leq C(M) \left(\int_{\mathbb{R}^2}\omega(t,x)\,dx + \int_{\mathbb{R}^2} \mathcal{X}_{\{\omega(t,x)>0\}}\,dx \right) + \frac{1}{2}||\omega(t,\cdot)||_{\mathring{H}^\frac{1}{2}}^2 $$

\end{prop}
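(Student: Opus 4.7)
The plan is to dualize so that the fractional derivative sits on $\omega$, realize the resulting pairing as a symmetric Gagliardo double integral, and split near and far from the diagonal. Fix $t \in [-2,0]$ and set $h := (-\lap)^{-1/4} g(t,\cdot)$, so that $\|h\|_{L^\infty} + [h]_{C^{1/2}} \le CM$. By $L^2$-duality and the Gagliardo representation of $(-\lap)^{1/4}$ in $\mathbb{R}^2$ (exponent $n+2s = 5/2$), followed by symmetrization in $(x,y)$,
\begin{equation*}
\int_{\mathbb{R}^2} g\,\omega \;=\; \int_{\mathbb{R}^2} h\,(-\lap)^{1/4}\omega \;=\; \frac{c_0}{2}\iint \frac{(h(x)-h(y))(\omega(x)-\omega(y))}{|x-y|^{5/2}}\,dx\,dy,
\end{equation*}
where the $t$-dependence is suppressed throughout.

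For the near zone $\{|x-y|\le 1\}$, I would use the full H\"older bound $|h(x)-h(y)| \le M|x-y|^{1/2}$ to absorb one of the half-powers of the singular kernel, leaving an integrand of the form $M|\omega(x)-\omega(y)|/|x-y|^2$. Observing that $(\omega(x)-\omega(y))$ vanishes unless $\omega(x)>0$ or $\omega(y)>0$, I would apply Cauchy--Schwarz against the Gagliardo kernel $|x-y|^{-3}$ of $\Hdot^{1/2}(\mathbb{R}^2)$:
\begin{equation*}
\iint_{|x-y|\le 1}\frac{M|\omega(x)-\omega(y)|}{|x-y|^2}\,dx\,dy \;\le\; M\,\|\omega\|_{\Hdot^{1/2}}\Bigl(2\int_{\supp \omega(t,\cdot)}\!\int_{|x-y|\le 1}\frac{dy}{|x-y|}\,dx\Bigr)^{1/2} \;\le\; CM\,|\supp \omega(t,\cdot)|^{1/2}\,\|\omega\|_{\Hdot^{1/2}},
\end{equation*}
since $\int_{|x-y|\le 1}|x-y|^{-1}\,dy = 2\pi$. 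Young's inequality with balanced weights then splits this as $\tfrac{1}{2}\|\omega\|_{\Hdot^{1/2}}^2 + C(M)\,|\supp \omega(t,\cdot)|$, matching the target bound exactly.

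For the far zone $\{|x-y|>1\}$, I would discard H\"older regularity in favor of $|h(x)-h(y)|\le 2M$, and use $\omega\ge 0$ to bound $|\omega(x)-\omega(y)|\le \omega(x)+\omega(y)$. Fubini then collapses the $y$-integral:
\begin{equation*}
2M\iint_{|x-y|>1}\frac{\omega(x)+\omega(y)}{|x-y|^{5/2}}\,dx\,dy \;\le\; 4M\,\|\omega(t,\cdot)\|_{L^1}\int_1^\infty r^{-3/2}\,dr \;\le\; CM\,\|\omega(t,\cdot)\|_{L^1}.
\end{equation*}
Adding the near and far contributions yields the claimed inequality, after recognizing $\int \mathcal{X}_{\{\omega>0\}}\,dx = |\supp \omega(t,\cdot)|$.

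The main technical point is justifying the symmetric kernel identity when $h$ is merely $C^{1/2}\cap L^\infty$, since then $(-\lap)^{1/4}h$ is not defined pointwise and the Plancherel pairing must be read distributionally. I plan to handle this by approximating $\omega$ by Schwartz functions (using $\omega\in L^1\cap L^2\cap \Hdot^{1/2}$ together with $|\{\omega>0\}|<\infty$), applying the identity in the smooth class, and passing to the limit using the uniform bounds on exactly the three quantities $\|\omega\|_{L^1}$, $|\supp\omega(t,\cdot)|$, and $\|\omega\|_{\Hdot^{1/2}}$ that appear on the right-hand side.
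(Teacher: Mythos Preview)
Your proposal is correct and follows essentially the same route as the paper: set $h=(-\lap)^{-1/4}g$, pass to the symmetric Gagliardo double integral with kernel $|x-y|^{-5/2}$, split into $\{|x-y|\le 1\}$ and $\{|x-y|>1\}$, use the $C^{1/2}$ seminorm of $h$ in the near zone together with the support condition on $\omega$, and use only $\|h\|_{L^\infty}$ in the far zone. The one cosmetic difference is in the near zone: the paper applies Young's inequality pointwise to the integrand (splitting $|x-y|^{-5/2}=|x-y|^{-1}\cdot|x-y|^{-3/2}$) and then integrates, whereas you first absorb $|h(x)-h(y)|\le M|x-y|^{1/2}$, apply Cauchy--Schwarz on the double integral, and finish with Young's on the resulting product $M|\{\omega>0\}|^{1/2}\|\omega\|_{\Hdot^{1/2}}$; the two computations are equivalent.
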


\begin{proof}
Put $h(t,x)= (-\lap)^{-\frac{1}{4}} g(t,x)$; we then have
\begin{align*}
I = \int_{\mathbb{R}^2}{g(t,x) \omega(t,x)\,dx} &= \int_{\mathbb{R}^2}{\eighth h(t,x) \eighth\omega(t,x)\,dx}\\
&\leq \int_{\mathbb{R}^2}\int_{\mathbb{R}^2}\frac{|h(t,x)-h(t,y)||\omega(t,x)-\omega(t,y)|}{|x-y|^\frac{5}{2}}\,dx\,dy \\
&= \iint_{|x-y|\leq1}\frac{|h(t,x)-h(t,y)||\omega(t,x)-\omega(t,y)|}{|x-y|^\frac{5}{2}}\,dx\,dy \\
&+ \iint_{|x-y|>1}\frac{|h(t,x)-h(t,y)||\omega(t,x)-\omega(t,y)|}{|x-y|^\frac{5}{2}}\,dx\,dy \\
&= I_1 + I_2.
\end{align*}
We begin by estimating $I_2$.  Since $h(t,x)\in L^\infty(C^\frac{1}{2})$, we have that
\begin{align*}
I_2 \leq& \iint_{|x-y|>1}\frac{ 2M |\omega(t,x)-\omega(t,y)|}{|x-y|^\frac{5}{2}}\,dx\,dy \\
\leq& 4M \iint_{|x-y|>1}\frac{\omega(t,x)}{|x-y|^\frac{5}{2}}\,dx\,dy\\
\leq& 4M \int_1^\infty r^{-\frac{3}{2}}\,dr \int_{\mathbb{R}^2} \omega(t,x)\,dx\\
\leq& 4M \int_{\mathbb{R}^2}\omega(t,x)\,dx\\
\end{align*}
We must now estimate $I_1$.  Using the symmetry in $x$ and $y$ and the fact that $$|\omega(t,x)-\omega(t,y)|\leq(\mathcal{X}_{\{ \omega(t,x) >0 \}}+\mathcal{X}_{\{ \omega(t,y) >0 \}})|\omega(t,x)-\omega(t,y)|$$
we have 
$$ I_1 \leq 2 \iint_{|x-y|\leq 1}{\mathcal{X}_{\{\omega(x)>0\}} \frac{|h(t,x)-h(t,y)||\omega(t,x)-\omega(t,y)|}{|x-y|^\frac{5}{2}}}\,dx\,dy.$$
By Cauchy's inequality, we have 
\begin{align*}
I_1 \leq 4 &\iint_{|x-y|\leq 1}{\mathcal{X}_{\{\omega(x)>0\}}\frac{|h(t,x)-h(t,y)|^2}{|x-y|^2}}\,dx\,dy\\
+& \frac{1}{4} \iint_{|x-y|\leq 1}{\frac{|\omega(t,x)-\omega(t,y)|^2}{|x-y|^3}}\,dx\,dy.
\end{align*}
Using the $L^\infty(C^\frac{1}{2})$ regularity of $h$, we have that 
\begin{align*}
4 \iint_{|x-y|\leq 1}\mathcal{X}_{\{\omega(t,x)>0\}}&\frac{|h(t,x)-h(t,y)|^2}{|x-y|^2}\,dx\,dy\\
\leq& 4 \iint_{|x-y|\leq 1}{\mathcal{X}_{\{\omega(t,x)>0\}}\frac{M^2|x-y|}{|x-y|^2}}\,dx\,dy\\
\leq& 4M^2 \int_0^1 \,dr \int_{\mathbb{R}^2} \mathcal{X}_{\{\omega(t,x)>0\}}\,dx\\
\leq& 4M^2 \int_{\mathbb{R}^2} \mathcal{X}_{\{\omega(t,x)>0\}}\,dx
\end{align*}
and
$$ \frac{1}{4}\iint_{|x-y|\leq 1}{\frac{|\omega(t,x)-\omega(t,y)|^2}{|x-y|^3}}\,dx\,dy \leq \frac{1}{2}||\omega(t,\cdot)||_{\mathring{H}^\frac{1}{2}}^2,$$
concluding the proof.
\end{proof}

\begin{lemma}[Global $L^\infty$ bound]\label{rough}
For any $M>0$, there exists $L>0$ such that the following holds.  Let $\theta \in L^\infty([-2,0];H^\frac{5}{2}(\mathbb{R}^2))$ be a solution to 
$$\partial_t \theta + u \cdot \grad \theta + \half \theta =  f$$
with 
$$||\theta||_{L^\infty([-2,0];L^2(\mathbb{R}^2))} + ||(-\lap)^{-\frac{1}{4}} f||_{L^\infty([-2,0]; C^{\frac{1}{2}}(\mathbb{R}^2))}<M$$
and $\dive u=0$.  Then $\theta(t,x)\leq L$ for $(t,x)\in[-1,0]\times\mathbb{R}^2$.
\end{lemma}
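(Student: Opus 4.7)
The plan is to run a De~Giorgi level-set iteration and absorb the forcing via Proposition~\ref{RHS}. Fix a large parameter $L$, to be chosen at the end, and set
\[
L_k = L\bigl(1 - 2^{-k-1}\bigr), \qquad T_k = -1 - 2^{-k}, \qquad \theta_k = (\theta - L_k)_+,
\]
so that $L/2 \leq L_k < L$ and $T_k \nearrow -1$. Define the level-$k$ energy
\[
U_k = \sup_{t \in [T_k,0]}\|\theta_k(t)\|_{L^2(\mathbb{R}^2)}^2 + \int_{T_k}^{0}\|\theta_k(s)\|_{\Hdot^{1/2}(\mathbb{R}^2)}^2\,ds.
\]
The goal is a nonlinear recursion $U_k \leq \Lambda^k L^{-\delta}\,U_{k-1}^{1+\varepsilon}$ with $\Lambda,\delta,\varepsilon>0$ depending only on $M$. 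Since the base case $U_0 \leq M^2$ is immediate from the hypothesis, such a recursion forces $U_k \to 0$ once $L=L(M)$ is taken sufficiently large, and hence $\theta \leq L$ on $[-1,0]\times\mathbb{R}^2$.

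\textbf{Energy inequality.} I multiply the equation by $\theta_k$ and integrate in $x$. The drift term drops out since $\dive u = 0$ and the $L_k\,u\cdot\grad\theta_k$ contribution integrates to zero. The fractional dissipation is treated via the C\'ordoba--C\'ordoba pointwise inequality applied to the convex truncation, giving $\int \theta_k\,\half\theta\,dx \geq \|\theta_k\|_{\Hdot^{1/2}}^2$. For the forcing, I verify the hypotheses of Proposition~\ref{RHS} with $\omega = \theta_k$: nonnegativity is automatic, the support $\{\theta(t) > L_k\}$ has measure at most $4M^2/L^2$ by Chebyshev applied to $\|\theta(t)\|_{L^2} \leq M$, and Cauchy--Schwarz then gives $\|\theta_k(t)\|_{L^1} \leq \|\theta_k(t)\|_{L^2}|\supp\theta_k(t)|^{1/2} < \infty$. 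The proposition then yields
\[
\tfrac{d}{dt}\tfrac{1}{2}\|\theta_k(t)\|_{L^2}^2 + \tfrac{1}{2}\|\theta_k(t)\|_{\Hdot^{1/2}}^2 \leq C(M)\bigl(\|\theta_k(t)\|_{L^1} + |\{\theta_k(t) > 0\}|\bigr).
\]

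\textbf{Closing the recursion.} Since $[T_{k-1},T_k]$ has length $2^{-k}$, I pick $s \in [T_{k-1},T_k]$ with $\|\theta_k(s)\|_{L^2}^2 \leq 2^k\int_{T_{k-1}}^{T_k}\|\theta_k\|_{L^2}^2\,d\tau$, integrate the energy inequality from this $s$ to an arbitrary $t \in [T_k,0]$, and take the supremum to obtain
\[
U_k \leq 2^{k+1}\int_{T_{k-1}}^{0}\|\theta_k\|_{L^2}^2\,d\tau + 2C(M)\int_{T_{k-1}}^{0}\bigl(\|\theta_k\|_{L^1} + |\{\theta_k>0\}|\bigr)\,d\tau.
\]
The key observation is that on $\{\theta_k>0\}$ one has $\theta_{k-1} \geq L_k - L_{k-1} = L\,2^{-k-1}$, so Chebyshev upgrades each term on the right to a power of $\|\theta_{k-1}\|_{L^p}$ at the cost of a negative power of $L$. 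To produce the super-linear power of $U_{k-1}$ needed to close the iteration, I invoke the two-dimensional Sobolev embedding $\Hdot^{1/2}(\mathbb{R}^2) \hookrightarrow L^{4}(\mathbb{R}^2)$, which combined with $\theta_{k-1} \in L^\infty_t L^2_x$ gives $\theta_{k-1} \in L^{3}_{t,x}$ on $[T_{k-1},0]$ with norm controlled by $U_{k-1}^{1/2}$ via standard interpolation. Substituting back yields the recursion $U_k \leq \Lambda^k L^{-\delta}\,U_{k-1}^{3/2}$.

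\textbf{Main obstacle.} The delicate point is that the forcing $f$ has no direct $L^2$ structure and threatens to dominate the dissipation; this is exactly what Proposition~\ref{RHS} was designed to handle, trading $\int f\,\theta_k$ for half of the dissipation plus linear expressions in $\|\theta_k\|_{L^1}$ and $|\{\theta_k>0\}|$, both of which the De~Giorgi mechanism absorbs through the level gap $L\,2^{-k-1}$. Once this trade is in place, everything else is routine bookkeeping, and choosing $L$ large enough defeats the $\Lambda^k$ factors and drives $U_k$ to zero.
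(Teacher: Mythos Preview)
Your argument is correct and follows the paper's proof essentially step for step: the same De~Giorgi level-set iteration, the same use of Proposition~\ref{RHS} to handle the forcing, the same C\'ordoba--C\'ordoba inequality for the dissipation, the same Sobolev embedding $\Hdot^{1/2}(\mathbb{R}^2)\hookrightarrow L^4(\mathbb{R}^2)$ and interpolation to land in $L^3_{t,x}$, and the same Chebyshev mechanism exploiting the level gap to produce the superlinear recursion $U_k \lesssim \Lambda^k L^{-\delta} U_{k-1}^{3/2}$. The only cosmetic differences are your choice $L_k = L(1-2^{-k-1})$ versus the paper's $L(1-2^{-k})$, and your selection of a single good $s\in[T_{k-1},T_k]$ via pigeonhole versus the paper's averaging in $s$; these are equivalent. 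One small imprecision: the claim ``$U_0\le M^2$ is immediate from the hypothesis'' is not quite right, since the hypothesis bounds only $\|\theta\|_{L^\infty_t L^2_x}$ and not the $\Hdot^{1/2}$ piece of $U_0$---but a single pass through your energy inequality on $[-2,0]$ (using $\|\theta_0(-2)\|_{L^2}\le M$ and your Chebyshev bounds on $\|\theta_0\|_{L^1}$ and $|\{\theta_0>0\}|$) gives $U_0\le C(M)$, which is all the iteration needs.
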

\begin{proof}
The main tool in showing the $L^\infty$ bound is an energy inequality, which we now derive.  Fix a constant $c>0$, and define $\theta_c:=(\theta-c)_+$.  Multiplying the equation by $\theta_c$, integrating in space, and using that the drift is divergence-free, we obtain

 $$ \frac{1}{2}\frac{d}{dt}\int_{\mathbb{R}^2}\theta_c^2(t,x)\,dx + \int_{\mathbb{R}^2}{\theta_c(t,x)\half \theta(t,x)\,dx} = \int_{\mathbb{R}^2} {f(t,x)\theta_c(t,x)\,dx}. $$
Making use of a pointwise estimate of C\'{o}rdoba and C\'{o}rdoba \cite{cc}, we have that
\begin{equation}\label{eq:1}
\frac{1}{2}\frac{d}{dt}\int_{\mathbb{R}^2}\theta_c^2(t,x)\,dx + \int_{\mathbb{R}^2}{|\quarter \theta_c(t,x)|^2\,dx} \leq \int_{\mathbb{R}^2} {f(t,x)\theta_c(t,x)\,dx}.
\end{equation}

Applying \cref{RHS} with $g=f$, $\theta_c=\omega$, we obtain the energy inequality
\begin{equation}\label{eq:energy}
\frac{d}{dt}\int_{\mathbb{R}^2}\theta_c^2(t,x)\,dx + \int_{\mathbb{R}^2}{|\quarter \theta_c(t,x)|^2\,dx} \leq C(M) \left(\int_{\mathbb{R}^2}\theta_c(t,x)\,dx + \int_{\mathbb{R}^2} \mathcal{X}_{\{\theta_c(t,x)>0\}}\,dx \right).
\end{equation}

With the energy inequality in hand, we obtain the desired nonlinear recurrence relation on the superlevel sets of energy.  Let $L>1$ be specificed later, and put $L_k=L(1-2^{-k})$, $\theta_k=(\theta-L_k)_+$ and $T_k=-1-2^{-k}$.  Define 
$$ E_k= \sup_{t\in[T_k,0]} \int_{\mathbb{R}^2} \theta_k^2(t,x)\,dx + \int_{T_k}^{0}\int_{\mathbb{R}^2}|\quarter \theta_k(\tau,x)|^2\,dx\,d\tau.$$
Choose $s\in[T_{k-1},T_k]$ and $t\in[T_k,0]$.  Integrating \eqref{eq:energy} from $s$ to $t$ yields 
\begin{align*}
&\int_{\mathbb{R}^2} \theta_k^2(t,x)\,dx + \int_{s}^{t}\int_{\mathbb{R}^2}|\quarter \theta_k(\tau,x)|^2\,dx\,d\tau \\
&\hspace{.3in} \leq \int_{\mathbb{R}^2} \theta_k^2(s,x)\,dx + C(M) \left(\int_s^t \int_{\mathbb{R}^2}|\theta_k(\tau,x)|\,dx\,d\tau + \int_s^t \int_{\mathbb{R}^2} \mathcal{X}_{\{\theta_k(\tau,x)>0\}}\,dx\,d\tau \right).
\end{align*}
Now taking the supremum on the left hand side, discarding the energy at time $s$, and averaging over $s\in[T_{k-1},T_k]$ on the right hand side, we have 
\begin{equation}\label{eq:nonlinear}
E_k \leq C(M)2^k \left(\int_{T_{k-1}}^0 \int_{\mathbb{R}^2} \theta_k(\tau,x) \,dx\,d\tau + \int_{T_{k-1}}^0 \int_{\mathbb{R}^2} \mathcal{X}_{\{\theta_k(\tau,x)>0\}}\,dx\,d\tau \right).
\end{equation}

We must control the right-hand side of \eqref{eq:nonlinear} by $E_{k-1}$ in a nonlinear fashion.  First, note that Sobolev embedding gives that $H^\frac{1}{2}(\mathbb{R}^2) \subset L^4(\mathbb{R}^2)$, and using this estimate to interpolate, we obtain
$$ ||\theta_k||_{L^3([T_k,0]\times\mathbb{R}^2)} \leq  E_k^\frac{1}{2}. $$
Next, we have that if $\theta_k > 0$, then $\theta_{k-1} \geq 2^{-k}L$ and $\mathcal{X}_{\{ \theta_{k} >0 \}} \leq \frac{2^k}{L} \theta_{k-1}$.  Therefore
$$ \int_{T_{k-1}}^0 \int_{\mathbb{R}^2} \theta_k \,dx\,d\tau \leq \int_{T_{k-1}}^0 \int_{\mathbb{R}^2} \theta_k \mathcal{X}_{\{ \theta_{k}>0 \}}^2 \,dx\,d\tau \leq \frac{4^k}{L^2} \int_{T_{k-1}}^0 \int_{\mathbb{R}^2} \theta_{k-1}^3 \,dx\,d\tau \leq \frac{4^k}{L^2} E_{k-1}^\frac{3}{2} $$
and
$$ \int_{T_{k-1}}^0 \int_{\mathbb{R}^2} \mathcal{X}_{\{ \theta_{k}>0 \}} \,dx\,d\tau = \int_{T_{k-1}}^0 \int_{\mathbb{R}^2} \mathcal{X}_{\{ \theta_{k}>0 \}}^3 \,dx\,d\tau \leq \frac{8^k}{L^3} \int_{T_{k-1}}^0 \int_{\mathbb{R}^2} \theta_{k-1}^3 \,dx\,d\tau \leq \frac{8^k}{L^3} E_{k-1}^\frac{3}{2}. $$ 
Combining these estimates, we have 
$$ E_k \leq \frac{C(M)^k}{L^2} E_{k-1}^\frac{3}{2}. $$ Depending only on $M$, we can choose $L$ to be large enough  and use \eqref{eq:nonlinear} to show that $E_1$ can be made small enough such that $\lim_{k\rightarrow\infty}E_k=0$.  Thus $||\theta_k||_{L^3([T_k,0]\times\mathbb{R}^2)}$ converges to zero as $k\rightarrow \infty$, and applying the Lebesgue dominated convergence theorem shows that $\theta\leq L$ on $[-1,0]\times\mathbb{R}^2$, proving the claim.
\end{proof}

To accommodate the second De Giorgi lemma, we must reformulate the $L^\infty$ bound.  The nonlocality of the equation makes the zooming arguments more delicate;  since the decrease in oscillation required for H\"{o}lder regularity will be nonlocal in nature, we cannot use a sharp cutoff as in \cref{rough}.  To address this, we will make use of a suitable cutoff function. Let $c(x)\in C^\infty(\mathbb{R}^2)$ such that $c\geq 0$, $c=0$ on $B_\frac{7}{4}(0)$, $c(x)=(|x|^\frac{1}{4}-2)_+$ for $|x|\geq 3$, and $c(x)\geq (|x|^\frac{1}{4}-2)_+$ for $|x|\geq 2$. We claim that $||\half c||_{L^\infty} < \infty$.  Using that $\nabla c \in C^\alpha(\mathbb{R}^2)$ for any $\alpha<1$, applying the Riesz transform, and using part (1) of \cref{corollaries} shows that $\half c \in BMO \cap \Cdot^\alpha(\mathbb{R}^2) \subset L^\infty(\mathbb{R}^2)$.  This cutoff function introduces an additional difficulty in that the drift term does not disappear after multiplying the equation by $\theta - c(x)$ and integrating.  Since $\dnu_1$ is now $L^\infty$ and $\grad^\perp\Psi_2 \in L^\infty$ by \cref{apriori}, the Riesz transform gives that $\grad^\perp \Psi \in \BMO$.  Performing a change of variables which follows the mean value of the drift through time, the new drift term will be exponentially integrable. Since $\BMO$ bounds are invariant under rescalings as well, following the flow at each successive dilation provides the needed uniform estimates.  With this in mind,  we can obtain the following sharper $L^\infty$ bound.  

\begin{lemma}[Local $L^\infty$ bound]\label{fine}
For any $C^*>0$, there exists $\delta>0$ such that the following holds. Let $\theta \in L^\infty([-2,0];H^\frac{5}{2}(\mathbb{R}^2))$ be a solution to 
$$\partial_t \theta + u \cdot \grad \theta + \half \theta =  f$$
such that $\theta(t,x) \leq 1+c(x)$ on $[-2,0]\times \mathbb{R}^2$, $\dive u=0$ and
$$||(-\lap)^{-\frac{1}{4}} f||_{L^\infty([-2,0];C^\frac{1}{2})(\mathbb{R}^2)} + ||u||_{L^\infty([-2,0];L^4(B_2(0)))} \leq C^*.$$ 
If 
$$ |\{{\theta}>0\}\cap ([-1,0]\times B_1(0))| < \delta,$$
then $\theta(t,x)\leq \frac{1}{2}$ for $(t,x)\in[-\frac{1}{2},0]\times B_\frac{1}{2}(0)$.
\end{lemma}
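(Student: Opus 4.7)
The plan is to run a De Giorgi iteration on shrinking parabolic cylinders, reducing $\theta$ across thinner and thinner layers above a slowly rising cutoff. Set $L_k = \tfrac{1}{2}(1-2^{-k})$, $T_k = -\tfrac{1}{2}(1+2^{-k})$, $r_k = \tfrac{1}{2}(1+2^{-k})$, and choose smooth cutoffs $\eta_k$ with $\eta_k \equiv 1$ on $B_{r_{k+1}}(0)$, $\supp \eta_k \subset B_{r_k}(0)$, and $\|\grad \eta_k\|_{L^\infty} \lesssim 2^k$. Let $\theta_k = (\theta - L_k)_+$ and
$$ E_k = \sup_{t \in [T_k, 0]} \int_{\mathbb{R}^2} \eta_k^2 \theta_k^2(t,x)\,dx + \int_{T_k}^0 \|\quarter(\eta_k \theta_k)\|_{L^2(\mathbb{R}^2)}^2 \, dt. $$
Since $c(x) \equiv 0$ on $B_{7/4}(0) \supset B_{r_k}(0)$ and $\theta \leq 1 + c(x)$ globally, we have $0 \leq \theta_k \leq 1$ on $\supp \eta_k$.

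The central step is a localized energy inequality. Multiplying the equation by $\eta_k^2 \theta_k$, integrating, and using $\dive u = 0$ together with the pointwise C\'ordoba--C\'ordoba inequality yields
$$ \tfrac{1}{2}\frac{d}{dt}\int \eta_k^2 \theta_k^2 + \int |\quarter(\eta_k \theta_k)|^2 \leq \int \eta_k^2 \theta_k f - \tfrac{1}{2}\int \theta_k^2 u\cdot\grad(\eta_k^2) + \mathcal{R}_k, $$
where $\mathcal{R}_k$ collects the commutator $[\eta_k, \half]$ and the tail contribution from $\half(\theta - \theta_k)$ tested against $\eta_k^2 \theta_k$. Using the pointwise bound $\theta \leq 1 + c(x)$ with $c(x) \lesssim |x|^{1/4}$ and the $|x-y|^{-3}$ decay of the kernel of $\half$, one controls $\mathcal{R}_k \leq C\,2^{Ck}\bigl(\|\eta_{k-1}\theta_{k-1}\|_{L^2}^2 + |\{\theta_k > 0\}\cap \supp\eta_{k-1}|\bigr)$. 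The drift is handled by H\"older with the hypothesis $u \in L^\infty_t L^4_x(B_2(0))$, combined with Sobolev embedding $\Hdot^{1/2}(\mathbb{R}^2) \hookrightarrow L^4(\mathbb{R}^2)$ to partly absorb into the dissipation. Finally, \cref{RHS} applied with $\omega = \eta_k^2 \theta_k$ controls $\int \eta_k^2 \theta_k f$ by $C(C^*)\bigl(\|\eta_k^2\theta_k\|_{L^1} + |\{\eta_k^2\theta_k > 0\}|\bigr)$ modulo half of the dissipation.

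Integrating from $s \in [T_{k-1}, T_k]$ to $t \in [T_k, 0]$, taking the sup over $t$ on the left, and averaging over $s$ on the right (as in the proof of \cref{rough}), together with the Sobolev interpolation $\|\theta_{k-1}\|_{L^3([T_{k-1},0]\times\mathbb{R}^2)} \lesssim E_{k-1}^{1/2}$ and the elementary bound $\mathcal{X}_{\{\theta_k > 0\}} \leq 2^{p(k+1)}\theta_{k-1}^p$, yields a nonlinear recurrence of the form $E_k \leq C(C^*)^k E_{k-1}^{1+\beta}$ for some fixed $\beta > 0$. A single application of the energy inequality on $[-1,0]$, using $\theta_0 \leq 1$ on $B_1(0)$, gives the base bound $E_1 \leq C(C^*)\delta$. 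Choosing $\delta$ small enough (depending only on $C^*$) makes the recurrence contractive, so $E_k \to 0$ as $k \to \infty$. Since $\eta_k \to \mathcal{X}_{B_{1/2}(0)}$, $\theta_k \to (\theta - \tfrac{1}{2})_+$, and $T_k \to -\tfrac{1}{2}$, Fatou's lemma implies $\theta \leq \tfrac{1}{2}$ on $[-\tfrac{1}{2}, 0] \times B_{1/2}(0)$.

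The main obstacle is the nonlocal dissipation term in the presence of the spatial cutoff: the C\'ordoba--C\'ordoba inequality no longer directly produces a dissipation of $\eta_k \theta_k$, so the commutator $[\eta_k, \half]$ and the tail of $\theta$ at infinity must be controlled explicitly with constants growing only polynomially in $k$. It is precisely to keep this tail integrable against the nonlocal kernel that the very slow growth of the cutoff $c(x) \sim |x|^{1/4}$ is imposed in the hypothesis.
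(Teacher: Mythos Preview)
Your iteration is a legitimate De Giorgi scheme, but the localization strategy is genuinely different from the paper's. You cut $\theta$ at \emph{constant} levels $L_k$ and then localize by multiplying the equation by $\eta_k^2\theta_k$; this forces you to confront the commutator $[\eta_k,(-\bar\Delta)^{1/2}]$ acting on the non-compactly-supported function $\theta_k=(\theta-L_k)_+$, whose tail may grow like $1+c(x)\sim|x|^{1/4}$. The paper instead truncates at the \emph{space-dependent} barrier $1+c(x)-\gamma_k(x)$, where $\gamma_k$ is a bump supported in $B_{\frac12+2^{-k-1}}$ with $|\bar\nabla\gamma_k|\le C2^k$ and $\|(-\bar\Delta)^{1/2}\gamma_k\|_{L^\infty}\le Ck2^k$. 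Since $\theta\le 1+c$, the truncation $\theta_k:=(\theta-(1+c-\gamma_k))_+$ satisfies $0\le\theta_k\le\gamma_k$ and is therefore \emph{automatically} compactly supported and bounded by $1$. One multiplies by $\theta_k$ itself: C\'ordoba--C\'ordoba gives the clean dissipation $\int|(-\bar\Delta)^{1/4}\theta_k|^2$, the only residual nonlocal term is the additive $\int((-\bar\Delta)^{1/2}(c-\gamma_k))\theta_k\le Ck2^k\int\theta_k$, and the drift produces $\int(u\cdot\bar\nabla(c-\gamma_k))\theta_k$, directly estimated via $\|u\|_{L^4(B_2)}$ and H\"older with exponent $4/3$ on $\theta_k$. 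No commutator, no tail of $\theta_k$ at infinity.

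Your route can be closed, but the one-line bound on $\mathcal R_k$ is the step that needs real work. The commutator $[\eta_k,(-\bar\Delta)^{1/2}]\theta_k(x)=\int\frac{(\eta_k(x)-\eta_k(y))\theta_k(y)}{|x-y|^3}\,dy$ sees $\theta_k(y)$ for $y$ outside $\supp\eta_{k-1}$, where your quantities $\|\eta_{k-1}\theta_{k-1}\|_{L^2}$ and $|\{\theta_k>0\}\cap\supp\eta_{k-1}|$ give no information; you must use $\theta_k(y)\le 1+c(y)$ there and check that the resulting contribution, after pairing with $\eta_k\theta_k$, appears multiplied by $\int\eta_k\theta_k$ (hence recoverable in the recurrence) rather than as a bare additive constant that would destroy the nonlinear gain. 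Also note that the ``tail contribution from $(-\bar\Delta)^{1/2}(\theta-\theta_k)$'' you list in $\mathcal R_k$ actually has a favorable sign (by C\'ordoba--C\'ordoba) and can simply be dropped. The paper's barrier trick sidesteps all of this: the growth $c(x)$ is absorbed into the level function itself rather than fought against through a commutator.
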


\begin{proof}
Let $\gamma_k$ be a bump function compactly supported in $B_{\frac{1}{2}+2^{-k-1}}$, equal to $\frac{1}{2}+2^{-k-1}$ on $B_{\frac{1}{2}+2^{-k-2}}$, with $0\leq \gamma_k \leq \frac{1}{2}+2^{-k-1}$ for all $x$, and $\gamma_k<\gamma_l$ for $k>l$.  We will also impose that $|\grad \gamma_k|\leq C2^k$ and $|\half \gamma_k| \leq Ck2^{k}$ (we provide a short justification of this condition in the appendix after the discussion of \cref{inequality}).  Define ${\theta}_k:= ({\theta}-(1+c-\gamma_k))_+$.  We multiply the equation by ${\theta}_k$ and argue as before.  First we record the following estimates: 

\begin{align}
\int_{\mathbb{R}^2} \half {\theta} {\theta}_k \,dx \leq& \int_{\mathbb{R}^2} |\quarter {\theta}_k|^2\,dx + \int_{\mathbb{R}^2} \half (1+c-\gamma_k){\theta}_k\,dx \nonumber \\
       \leq& \hspace{.1in} \int_{\mathbb{R}^2} |\quarter{\theta}_k|^2 \,dx + \left(||\half c||_{L^\infty}+Ck2^{k}\right) \int_{\mathbb{R}^2} {\theta}_k\,dx \label{eq1}
\end{align}
and
\begin{align}
\int_{\mathbb{R}^2}{\left({u}\cdot \grad {\theta}\right){\theta}_k}\,dx =& \int_{\mathbb{R}^2}{\left({u}\cdot \grad {\theta}_k\right) {\theta}_k}\,dx \nonumber +\int_{\mathbb{R}^2}{\left( {u}\cdot \grad (1+c-\gamma_k)\right) {\theta}_k}\,dx \nonumber \\
=&\int_{\mathbb{R}^2}{\left( {u}\cdot \grad (1+c-\gamma_k)\right) {\theta}_k}\,dx \nonumber \\
\leq& \hspace{.1in} C(C^*)\left(||\grad c||_{L^\infty}+||\grad \gamma_k||_{L^\infty}\right) \left(\int_{\mathbb{R}^2} {\theta}_k^\frac{4}{3} \,dx\right)^\frac{3}{4}. \label{eq2}
\end{align}
after applying H\"{o}lder's inequality with $q=\frac{4}{3}$ on $\theta$ and $\frac{q-1}{q}=4$ on $u$. In addition, we can estimate the term
\begin{equation}\label{forcing}
\int_{\mathbb{R}^2} f {\theta}_k \,dx 
\end{equation}
using \cref{RHS}.  Combining estimates for \eqref{eq1}, \eqref{eq2}, and \eqref{forcing}, we arrive at the energy inequality
\begin{align}
\frac{d}{dt}\int_{\mathbb{R}^2}{\theta}_k^2\,dx &+ \int_{\mathbb{R}^2}{|\quarter {\theta}_k|^2\,dx} \leq \nonumber\\
&C(C^*)\left(2^k+k2^{k}\right) \left(\int_{\mathbb{R}^2}{\theta_k}\,dx + \int_{\mathbb{R}^2} \mathcal{X}_{\{{\theta}_k >0\}}\,dx + \big( \int_{\mathbb{R}^2} {\theta}_k^q \,dx \big)^\frac{1}{q} \right). \label{fineenergy}
\end{align}

Now define $T_k:= -\frac{1}{2}-2^{-k-1}$, and put
$$ E_k:= \sup_{t\in[T_k,0]} \int_{\mathbb{R}^2} {\theta}_k^2(t,x)\,dx + \int_{T_k}^{0}\int_{\mathbb{R}^2}|\quarter {\theta}_k(\tau,x)|^2\,dx\,d\tau.$$
Integrating in time, we have that the first two terms on the right hand side can be estimated as in \cref{rough}.  Using Jensen's inequality and the fact that $\mathcal{X}_{\{ \theta_k>0 \}} \leq 2^{k}\theta_{k-1}$, we can estimate the third term on the right hand side of \eqref{fineenergy} by
\begin{align*}
\int_{T_{k-1}}^0 \left( \int_{\mathbb{R}^2} {\theta}_k^q \,dx \right)^\frac{1}{q} \,d\tau \leq& \int_{T_{k-1}}^0 \left( \int_{\mathbb{R}^2} {\theta}_k^q \mathcal{X}_{\{ {\theta}_{k}>0 \}}^{3-q} \,dx \right)^\frac{1}{q} \,d\tau \\
\leq& \hspace{.05in}C \left( \int_{T_{k-1}}^0 \int_{\mathbb{R}^2} {\theta}_k^q \mathcal{X}_{\{ {\theta}_{k}>0 \}}^{3-q} \,dx\,d\tau \right)^\frac{1}{q} \\
\leq& \hspace{.05in}C \left( \left(2^{k} \right)^{3-q} \int_{T_{k-1}}^0 \int_{\mathbb{R}^2} {\theta}_{k-1}^3 \,dx\,d\tau \right)^\frac{1}{q}\\
\leq& \hspace{.05in} C^k E_{k-1}^\frac{3}{2q}.
\end{align*}
Using the integrability assumption on $u$, and recalling that $q=\frac{4}{3}$, the nonlinear recurrence relation on $E_k$ follows as in \cref{rough}.  Noticing that \eqref{fineenergy} shows that choosing $\delta$ arbitrarily small makes $E_0$ arbitrarily small, there exists $\delta$ such that $\lim_{k\rightarrow\infty}E_k =0$.  Therefore, $\theta_k$ converges to $0$ in $L^2$ for every time $t\in[-\frac{1}{2},0]$. Thus ${\theta} \leq \frac{1}{2}$ on $[-\frac{1}{2},0]\times B_\frac{1}{2}(0)$.
\end{proof}

With the $L^\infty$ bound in hand, we turn to the second half of the De Giorgi argument.  Let us remark the argument in \cite{ccv} works for kernels comparable to the fractional Laplacian $\fractional$  raised to any power $\alpha \in (0,1)$.  In addition, Schwab and Silvestre \cite{Schwab2016}) proved a regularity result for parabolic equations assuming that the drift and the forcing were bounded.  Without bounded drift and forcing, we cannot follow \cite{Schwab2016}. However, we have more dissipation than is necessary for the argument in \cite{ccv}.  Therefore, we can instead make a compactness argument following \cite{v}. Since $\alpha = \frac{1}{2}$, the solutions belong to $H^\frac{1}{2}$, and we can make use of \cref{bbm}. This will show that the energy cannot increase or decrease too rapidly in time.    

First, a parabolic version of the isoperimetric lemma will be shown, following the proof in \cite{v}.  This will then imply that $\theta$ enjoys a geometric rate of decrease in oscillation.  
Let $\phi$ be a compactly supported, radially symmetric and decreasing, $C^\infty$ bump function such that $0\leq\phi(x) \leq 1$ for all $x$, $\phi=1$ on $B_1(0)$, and $\supp\phi\subset B_\frac{3}{2}(0)$.  Let $\phi_0(x) = 1+c(x)-\phi(x)$, and $\phi_1(x)=1+c(x)-\frac{1}{2}\phi(x)$.

\begin{lemma}[Isoperimetric Lemma]\label{isoperimetric}
For any $C^*,\beta>0$ there exists $\alpha$ such that the following holds.  Let ${\theta}\in L^\infty([-2,0];H^\frac{5}{2}(\mathbb{R}^2))$ solve
$$\partial_t \theta + u \cdot \grad \theta + \half \theta =  f$$
with ${\theta}(x) \leq 1+c(x)$. Assume that  
$$||(-\lap)^{-\frac{1}{4}} f||_{L^\infty([-2,0];C^\frac{1}{2})(\mathbb{R}^2)} + ||u||_{L^\infty([-2,0];L^4(B_2(0)))} \leq C^*$$ 
and $\dive u=0$.  Fix $\delta$ as in \cref{fine}.   Define
\begin{align*}
A=& \{{\theta} > \frac{1}{2} \}\cap ([-1,0]\times B_1)\\
C=& \{{\theta} \leq 0 \} \cap ([-2,-1]\times B_1 )\\
D=& \{ \phi_0 < \theta \leq \phi_1 \} \cap ( [-2,0]\times B_2 ).
\end{align*}
Then if $|A|\geq \delta$, $|C|\geq \beta$, then $|D|\geq \alpha$.
\end{lemma}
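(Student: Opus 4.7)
The plan is to argue by contradiction via a compactness-rigidity scheme. Suppose the conclusion fails: there is a sequence of solutions $(\theta_n, u_n, f_n)$ satisfying all the hypotheses with $|A_n| \geq \delta$ and $|C_n| \geq \beta$, yet $|D_n| \to 0$. The goal is to extract a limit and apply the characteristic-function rigidity result \cref{bbm}.

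I begin by deriving uniform energy estimates for the truncation $\omega_n := (\theta_n - \phi_0)_+$. Since $\theta_n \leq 1 + c$ and $\phi_0 = 1 + c - \phi$, one has $0 \leq \omega_n \leq \phi$, so $\omega_n$ is uniformly bounded and supported in $\supp \phi$. Testing the equation for $\theta_n$ against $\omega_n$, I would estimate the terms as follows: (i) the dissipation splits as $\int \omega_n \half \theta_n\, dx = \int \omega_n \half(\theta_n - \phi_0)\, dx + \int \omega_n \half \phi_0\, dx$, where the first piece is bounded below by $\|\omega_n\|_{\Hdot^\frac{1}{2}}^2$ via the C\'ordoba--C\'ordoba pointwise inequality, while the second is controlled using $\half \phi_0 = \half c - \half \phi \in L^\infty$ (the bound on $\half c$ is already recorded in the paper and $\half \phi \in L^\infty$ since $\phi \in C_c^\infty$); (ii) the transport term collapses via $\dive u_n = 0$ to $\int u_n \cdot \grad \phi_0\, \omega_n\, dx$, controlled by H\"older using $u_n \in L^\infty_t L^4(B_2)$ and the compact support of $\grad \phi_0$; (iii) the forcing is handled by \cref{RHS} with $g = f_n$, $\omega = \omega_n$, where the $\Hdot^\frac{1}{2}$ contribution is absorbed into the dissipation. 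The outcome is a uniform estimate
\[
\frac{d}{dt}\|\omega_n(t)\|_{L^2}^2 + \|\omega_n(t)\|_{\Hdot^\frac{1}{2}}^2 \leq C,
\]
which in particular renders the scalar functions $t \mapsto \|\omega_n(t)\|_{L^2}^2$ equi-Lipschitz on $[-2, 0]$.

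Next I exploit compactness: using the equation to bound $\partial_t \omega_n$ in a negative Sobolev space on $B_2$ and combining with the spatial $\Hdot^\frac{1}{2}$ bound, Aubin--Lions extracts a subsequence with $\omega_n \to \omega$ strongly in $L^2([-2,0] \times B_2)$ and pointwise a.e. Since $|D_n| \to 0$, the limit obeys $\omega(t,x) \in \{0\} \cup [\phi(x)/2, \infty)$ for a.e. $(t,x)$. Applying \cref{bbm} slicewise to $r(x) := \omega(t,x)$ (noting that $r^2 \in \Hdot^\frac{1}{2}(\supp\phi)$ from the energy bound, that $r$ is bounded, and that $0 \leq \phi \leq 1$ implies $\phi^2/4 \leq \phi/2$, so the set $\{0 < r^2 < \phi^2/4\}$ is empty) yields, for a.e. $t$, the dichotomy: either $\omega(t, \cdot) \equiv 0$ on $\supp \phi$ (the low regime), or $\omega(t, \cdot) \geq \phi/2$ on $\supp \phi$, in particular $\omega(t, \cdot) \geq 1/2$ on $B_1$ (the high regime).

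The contradiction then follows by analyzing $\tilde F_n(t) := \|\omega_n(t)\|_{L^2}^2$. The equi-Lipschitz bound from the energy step and Arzel\`a--Ascoli yield a continuous limit $\tilde F$, which agrees with $t \mapsto \|\omega(t)\|_{L^2}^2$ by strong $L^2$ convergence. The dichotomy forces $\tilde F(t) = 0$ on the low regime and $\tilde F(t) \geq |B_1|/4$ on the high regime. But $|C_n| \geq \beta$ together with strong convergence places a positive-measure set of times in $[-2, -1]$ in the low regime, while $|A_n| \geq \delta$ places a positive-measure set of times in $[-1, 0]$ in the high regime; a continuous function cannot avoid the open interval $(0, |B_1|/4)$ and simultaneously realize values on both sides, giving the contradiction. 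The main obstacle is the uniform control in the energy step, since the non-constant and non-smooth truncation $\phi_0$ introduces error terms (in the dissipation, drift, and forcing) that must all be tamed by the $L^\infty$ bounds on $\half c$, $\half \phi$, and $\grad \phi_0$ together with \cref{RHS}; once these are in hand, the remainder is a fairly standard compactness-rigidity argument in the spirit of \cite{v}.
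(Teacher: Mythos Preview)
Your overall strategy---contradiction, energy estimate on the truncation, compactness, and the dichotomy from \cref{bbm}---matches the paper's proof. However, there is a genuine gap at the end, and a secondary issue in the compactness step.

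The gap is the equi-Lipschitz claim for $\tilde F_n(t)=\|\omega_n(t)\|_{L^2}^2$. The energy inequality you derive gives only
\[
\frac{d}{dt}\|\omega_n(t)\|_{L^2}^2 \;\leq\; C - \|\omega_n(t)\|_{\Hdot^{1/2}}^2 \;\leq\; C,
\]
i.e.\ a one-sided bound. There is no matching lower bound on the time derivative: the dissipation $\|\omega_n\|_{\Hdot^{1/2}}^2$ and the nonnegative C\'ordoba--C\'ordoba remainder $\int \omega_n\,\half\big((\theta_n-\phi_0)-(\theta_n-\phi_0)_+\big)$ are only controlled in $L^1_t$, not $L^\infty_t$, so the energy can in principle drop abruptly. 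Hence Arzel\`a--Ascoli and the intermediate-value argument do not apply. The paper fixes this by passing the \emph{integrated one-sided} inequality $\int v^2(t)\leq \int v^2(s)+C(t-s)$ to the limit and combining it with the dichotomy: since $\int_{B_2} v^2(t)\in\{0\}\cup[c_0,\infty)$ for a.e.\ $t$ (with $c_0=\tfrac14\int_{B_2}\phi^2>0$), once $\int v^2(s)=0$ the one-sided inequality forces $\int v^2(t)<c_0$ for $t$ close to $s$, hence $=0$, and one iterates to get $v\equiv0$ on $[s,0]\times B_2$. Your continuity argument should be replaced by this mechanism.

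The secondary issue is applying Aubin--Lions directly to $\omega_n$. You would need a uniform bound on $\partial_t\omega_n=\chi_{\{\theta_n>\phi_0\}}\partial_t\theta_n$ in a negative space, but multiplication by the rough characteristic function makes the terms $\chi f_n$ and $\chi\,\half\theta_n$ hard to control with the available uniform bounds. The paper instead applies Aubin--Lions to $v_j^2$: then $\partial_t v_j^2=2v_j\,\partial_t\theta_j$ carries the factor $v_j\in L^\infty\cap L^2_t\Hdot^{1/2}$ with compact support, which is precisely what is needed to invoke \cref{duality} for $v_j f_j$ and to run the measure-theoretic argument for $v_j\,\half\theta_j$. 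This is not a mere convenience; it is the point where the analysis of the time derivative actually closes.
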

\begin{proof}
Assume that the lemma is false.  Then, given $\beta$ there exists a sequence of solutions ${\theta}_j$ such that $|A_j| \geq\delta$, $|C_j| \geq\beta$, $|D_j|  \leq \frac{1}{j}$ with $A_j$, $C_j$, and $D_j$ defined analogously to $A$, $C$, and $D$.  Put $v_j = ({\theta}_j - \phi_0)_+$.  The proof will use the Aubin-Lions compactness lemma \cite{a} to extract a subsequential limit which will satisfy the energy inequality but does not take values in between $0$ and $\frac{1}{2}$, reaching a contradiction.  

In order to apply the Aubin-Lions lemma to $v_j^2$, we show that ${\partial_t}v_j^2 \in L^1([-2,0];H^{-2}(B_2(0)))$, $v_j^2 \in L^2(H^\frac{1}{2}(B_2(0)))$, and $v_j^2 \in L^\infty([-2,0];L^2(B_2(0)))$.  The third criterion is immediate from the assumptions, so we focus on the first and second.   We multiply by $v_j$ and integrate.  The $L^\infty(L^4)$ bound on ${u}$ gives that 
\begin{align*}
\int_{\mathbb{R}^2} ({u}\cdot \grad {\theta}_j) v_j \,dx =& \int_{\mathbb{R}^2} ({u}\cdot \grad v_j) v_j \,dx +  \int_{\mathbb{R}^2} \left({u}\cdot \grad(1+c-\phi)\right) v_j \,dx \\
=& \int_{\mathbb{R}^2} \left({u}\cdot \grad(1+c-\phi)\right) v_j \,dx \\
\leq& \hspace{.05in} C(C^*, \phi, c)
\end{align*}
after using the compact support of $v_j$, the bounds on $c$ and $\phi$, and H\"{o}lder's inequality.  
We can estimate the forcing term using \cref{RHS} by setting $f=g$ and $v_j=\omega$.  Using the $L^\infty$ bound on $v_j$ and absorbing the $\Hdot^\frac{1}{2}$ norm into the left hand side, we obtain the energy inequality
$$ \frac{d}{dt} \int_{\mathbb{R}^2}{v_j^2} + \int_{\mathbb{R}^2}{|\quarter v_j|^2} \leq  C(C^*, \phi, c).$$  
Integrating from $s$ to $t$ in time for $-2<s<t<0$ gives
\begin{equation}\label{compactness}
\int_{\mathbb{R}^2}{v_j^2(t)} + \int_s^t \int_{\mathbb{R}^2}{|\quarter v_j|^2} \leq \int_{\mathbb{R}^2}{v_j^2(s)} + C(C^*, \phi, c)(t-s).
\end{equation}
This implies that $v_j$ is uniformly bounded in  $L^2([-2,0];\Hdot^\frac{1}{2}(\mathbb{R}^2))$.  Also, note that since $0 \leq v_j \leq 1$, for all $x,y$ we have
$$ |v_j^2(x)-v_j^2(y)|^2 \leq  4|v_j(x)-v_j(y)|^2$$
Examining the Gagliardo seminorm shows then that $|| v_j^2 ||_{\Hdot^\frac{1}{2}(\mathbb{R}^2)} \leq 4|| v_j ||_{\Hdot^\frac{1}{2}(\mathbb{R}^2)} $ (see \cite{hitchhiker} for details concerning equivalent definitions of fractional Sobolev spaces).  By restriction, we have that $|| v_j^2 ||_{\Hdot^\frac{1}{2}(B_2(0))} \leq 4|| v_j ||_{\Hdot^\frac{1}{2}(B_2(0))} $, and so ${v_j^2}$ is uniformly bounded in $L^2([-2,0];H^\frac{1}{2}(B_2(0)))$ by a constant depending only on $C^*, \phi, c$.  

We now show that $\partial_t {v_j^2} \in L^1([-2,0];H^{-2}(B^2(0)))$; here $H^{-2}(B_2(0))$ denotes the dual of $H^2_0(B_2(0))$.  Multiplying the equation by $v_j$, we obtain
$$ \frac{1}{2} \partial_t v_j^2 = - \dive(u \theta_j) v_j - \half \theta_j v_j + f v_j. $$
We must show that each term on the right hand side belongs to $L^1([-2,0];H^{-2}(B_2(0)))$.  
\begin{enumerate}
\item Note that $\dive(u \theta_j) v_j = \frac{1}{2}\dive (u v_j^2) + u \cdot \nabla \phi_0 v_j$.  Since $v_j^2 \in L^\infty([-2,0]\times\mathbb{R}^2)$ is compactly supported and $u\in L^\infty([-2,0];L^4(\mathbb{R}^2))$,  part (2) of \cref{corollaries} shows that $\dive (u v_j^2)$ belongs to $ L^1([-2,0];\Hdot^{-1}(\mathbb{R}^2) \subset L^1([-2,0];H^{-2}(B_2(0))) $.  Also, since $\nabla \psi_0 $ is smooth and compactly supported, it is immediate that $u \cdot \nabla \phi_0 v_j \in L^1([-2,0];L^2(\mathbb{R}^2))$.  Therefore $ \dive( u \theta_j)v_j \in L^1([-2,0];H^{-2}(B_2(0)))$.  
\item Since $(-\lap)^{-\frac{1}{4}}f\in L^\infty([-2,0];L^\infty(\mathbb{R}^2))$ and $v_j \in L^\infty([-2,0]\times\mathbb{R}^2) \cap L^\infty([-2,0];\Hdot^\frac{1}{2}(\mathbb{R}^2))$ is compactly supported in $B_2(0)$, we can apply part (1) of \cref{duality} with $z=v_j$ and $f=w$ to conclude that $f v_j$ is uniformly bounded in $ L^1([-2,0];H^{-2}(\mathbb{R}^2)) \subset L^1([-2,0];H^{-2}(B_2(0)))$.
\item We have that $$ \half \theta_j v_j = \half[(\theta_j-\phi_0) - (\theta_j-\phi_0)_+] v_j + \half v_j v_j + \half \phi_0 v_j  .$$  Since $\half \phi_0 \in L^\infty([-2,0];L^2(\mathbb{R}^2))$ and $v_j\in L^\infty([-2,0];L^2(\mathbb{R}^2))$, it is immediate that $\half \phi_0 v_j \in L^1([-2,0];L^2(B_2(0)))$. In addition, we can apply part (2) of \cref{duality} to the second term to obtain that $\half v_j v_j \in L^1([-2,0];H^{-2}(B_2(0))$. In order to estimate the first term, first note that the pointwise estimate of C\'{o}rdoba and C\'{o}rdoba \cite{cc} shows that $\half[(\theta_j-\phi_0) - (\theta_j-\phi_0)_+] v_j$ is a positive measure on $B_3(0)$ for each time $t\in[-2,0]$.  We first show that $\half[(\theta-\phi_0) - (\theta-\phi_0)_+] v_j \in L^1([-2,0];\mathcal{M}(B_3(0)))$; here $\mathcal{M}(B_3(0))$ is the Banach space of all Borel measures on $B_3(0)$ with the total variation norm.  We have that $$  \half[(\theta-\phi_0) - (\theta-\phi_0)_+] v_j = -\frac{1}{2}{\partial_t} v_j^2 - (u\cdot \grad \theta_j) v_j - \half v_j v_j - \half \phi_0 v_j + f v_j .$$ To show that $\half[(\theta-\phi_0) - (\theta-\phi_0)_+] v_j \in L^1([-2,0];\mathcal{M}(B_3(0)))$, we multiply by $\mathcal{X}_{B_3(0)}$ and integrate in space and time.  Note that since each term on the right hand side contains a factor of $v_j$ which is compactly supported in $B_2(0)$, multiplying by $\mathcal{X}_{B_3(0)}$ has no effect.  First, we have that 
$$ \int_{-2}^0 \int_{B_3(0)} \partial_t v_j^2 = \int_{B_3(0)} \int_{-2}^0  \partial_t v_j^2  = \int_{B_3(0)} v_j^2(0) - v_j^2(-2) \leq 2 |B_3(0)|.$$
Here we have used the a priori regularity assumptions, the $\Hdot^\frac{1}{2}$ bound on $v_j$, and the equality
$$ \frac{1}{2} \partial_t v_j^2 = - u \cdot\grad\theta_j v_j - \half \theta_j v_j + f v_j $$
to justify integrating $\partial_t v_j^2$ in space and time.  Next, splitting $u\cdot \grad \theta_j v_j = u \cdot \grad \phi_0 v_j + u \cdot \grad v_j v_j$ and integrating by parts shows that $$  \int_{-2}^0 \int_{B_3(0)} u \cdot \grad \theta_j v_j \leq ||u||_{L^\infty(L^4)}. $$
Since $\half \phi_0 v_j$ is bounded, multiplying by $\mathcal{X}_{B_3(0)}$ and integrating produces at most a fixed constant depending only on $\phi_0$.   Also, we have that $$ - \int_{-2}^0 \int_{B_3(0)} \half v_j v_j $$ is negative and may be discarded.  Finally, applying \cref{RHS} with $g=f$, $\omega = v_j $ and using the $L^2(\Hdot^\frac{1}{2})$ and $L^\infty$ bounds on $v_j$ shows that $$ \int_{-2}^0 \int_{B_3(0)} f v_j \leq C(C^*, \phi, c).  $$
Therefore, $\half[(\theta-\phi_0) - (\theta-\phi_0)_+] v_j$ is bounded in $L^1([-2,0];\mathcal{M}(B_3(0)))$ by a constant depending only on $C^*, \phi, c$.  By Sobolev embedding, $H^2_0(B_2(0)) \subset C_c (B_3(0))$.  Recalling then that $\mathcal{M}(B_3(0)) = C_c (B_3(0))^*$, we have that $L^1([-2,0];\mathcal{M}(B_3(0))) \subset L^1([-2,0];H^{-2}(B_2(0)))$, showing that $\half[(\theta-\phi_0) - (\theta-\phi_0)_+] v_j$, and therefore $\half \theta_j v_j$, belong to $ L^1([-2,0];H^{-2}(B_2(0)))$.  
\end{enumerate}

Since $H^\frac{1}{2}(B_2(0))$ embeds compactly into $L^1(B_2(0))$ (see again \cite{hitchhiker}) and $L^1(B_2(0))$ embeds continuously into $H^{-2}(B_2(0))$, by the Aubin-Lions compactness lemma from \cite{a}, up to a subsequence, $v_j^2$ converges in $L^2([-2,0];L^1(B_2(0)))$ to a function $v^2$. Passing to the limit in \eqref{compactness} shows that $v^2$ satisfies the inequality $$ \int_{B_2}{v^2(t)}  \leq \int_{B_2}{v^2(s)} + C(C^*, \phi, c)(t-s)$$
for $s<t$.  By assumption, $\theta_j$ satisfies $|D_j|  \leq \frac{1}{j}$.  Therefore, $v_j$ then satisfies by definition 
$$ |\{0<v_j \leq\frac{1}{2}\phi \}\cap([-2,0]\times B_2)|\leq \frac{1}{j},$$ and $v_j^2$ satisfies
$$ |\{0<v_j^2 \leq\frac{1}{4}\phi^2 \}\cap([-2,0]\times B_2)|\leq \frac{1}{j}.$$
Using Tchebyshev's inequality and passing to the limit, we have that 
$$ |\{0<v^2 < \frac{1}{4}\phi^2 \}\cap([-2,0]\times B_2)|=0.$$ 
Since $v^2$ belongs to $L^2([-2,0];H^\frac{1}{2}(\supp\phi))$,  applying \cref{bbm} to $v^2$ shows that for almost every time $t\in[-2,0]$, either $v^2=0$ or $v^2 \geq \frac{1}{4}\phi^2 $.  Using that $|C_j|\geq \beta$ for every $j$, there must exist a positive measure set of times in $[-2,-1]$ for which $v=0$.  The energy inequality shows that as soon as $v=0$ for some time $s$, $v=0$ on all of $[s,0]\times B_2$, and thus we conclude that $v=0$ on $[-1,0]\times B_2$.  However, we also have that $|A_j|\geq \delta$ for all $j$, which bounds the norms of $v_j^2$ in $L^2([-1,0];L^1(B_2(0)))$ from below uniformly in $j$, contradicting the convergence of $v_j^2$ to $v^2$ in $L^2([-2,0];L^1(B_2(0)))$.  
\end{proof}

We turn now to the oscillation lemma.  We will make use of the cutoff function $c_\epsilon(x)= (|x|^\epsilon-2^{4\epsilon})_+$.

\begin{lemma}[Decrease in Oscillation]\label{decinosc}
For any $C^*$, there exists $\zeta\in(0,1)$, $\epsilon\in(0,\frac{1}{4})$, and $\eta>0$ such that the following holds. Let ${\theta}\in L^\infty([-2,0];H^\frac{5}{2}(\mathbb{R}^2))$ solve
$$\partial_t \theta + u \cdot \grad \theta + \half \theta =  f$$
with ${\theta}(x) \leq 1+c_\epsilon(x)$. Assume that  
$$\eta^{-1}||(-\lap)^{-\frac{1}{4}} f||_{L^\infty([-2,0];C^\frac{1}{2}(\mathbb{R}^2))} + ||u||_{L^\infty([-2,0];L^4(B_2(0)))} \leq C^*$$ 
and $\dive u=0$.   Let $\delta$, $\beta$, $\alpha$ be as in \cref{isoperimetric} with  $|C|\geq\beta$.  Then $\theta \leq 1-\zeta$ on $[-\frac{1}{2},0]\times B_\frac{1}{2}$.  
\end{lemma}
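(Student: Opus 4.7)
The plan is to iterate \cref{isoperimetric} and close with \cref{fine}. Starting from $\theta_0 := \theta$, I define inductively the affinely rescaled sequence $\theta_{k+1} := 2\theta_k - 1$. Each $\theta_k$ solves the same drift--diffusion equation with the identical divergence-free drift $u$ (whose $L^\infty(L^4(B_2))$ norm is unchanged), and with forcing $f_k = 2^k f$ (since $\half$ annihilates constants), so $\|(-\lap)^{-\frac{1}{4}} f_k\|_{L^\infty([-2,0];C^{\frac{1}{2}})} \leq 2^k \eta C^*$. The rescaling sends $\{\theta_k > \tfrac{1}{2}\}$ to $\{\theta_{k+1} > 0\}$, so the ``hot set'' for $\theta_{k+1}$ is smaller than that for $\theta_k$; dually, $\{\theta_k \leq 0\} \subseteq \{\theta_{k+1} \leq 0\}$, so the hypothesis $|C|\geq\beta$ is preserved at every level.

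At each step $k$ one of two alternatives occurs. If $|A_k| := |\{\theta_k > \tfrac{1}{2}\} \cap ([-1,0] \times B_1)| < \delta$, then \cref{fine} applied to $\theta_k$ yields $\theta_k \leq \tfrac{1}{2}$ on $[-1/2,0] \times B_{1/2}$. Unwinding the rescalings gives $\theta \leq 1 - 2^{-k-1}$ on $[-1/2,0] \times B_{1/2}$, which is the desired conclusion with $\zeta = 2^{-k-1}$. Otherwise $|A_k|\geq\delta$, and \cref{isoperimetric} (whose hypotheses are satisfied at level $k$ by the preceding paragraph) produces an intermediate set $D_k \subset [-2,0] \times B_2$ with $|D_k| \geq \alpha$ on which $\phi_0 < \theta_k \leq \phi_1$.

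The key geometric observation is that the $D_k$'s, when pulled back to the original $\theta$, correspond to pairwise disjoint subintervals of $\theta$-values, hence are mutually disjoint as subsets of $[-2,0]\times B_2$. Since this set has finite measure, the iteration must trigger the first alternative after at most $N := \lceil |[-2,0]\times B_2|/\alpha \rceil$ steps. I then fix $\zeta := 2^{-N-1}$ and choose $\eta$ so small that $2^N \eta$ is within the tolerances required by \cref{isoperimetric} and \cref{fine} at every level $k \leq N$; this pins down $\eta$ in terms of $C^*$ alone.

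The main obstacle will be verifying that the spatial cutoff $c_\epsilon$ is compatible with the iteration. After $k$ rescalings the natural upper bound becomes $\theta_k \leq 1 + 2^k c_\epsilon$, which exceeds $1 + c_\epsilon$ at infinity. The fix exploits two features of $c_\epsilon(x) = (|x|^\epsilon - 2^{4\epsilon})_+$: first, $c_\epsilon \equiv 0$ on $B_{2^4}$, so the bound $\theta_k \leq 1 + c_\epsilon$ holds pointwise inside $B_2$ where \cref{isoperimetric} and \cref{fine} actually look; second, the slow growth at infinity (controlled by choosing $\epsilon$ small in terms of $N$) ensures that the tail contribution of $2^k c_\epsilon$ to $\half$ can be absorbed into $f_k$ via the estimate in \cref{RHS}. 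Balancing $\epsilon$, $\eta$, and $\zeta$ against $N$ is where the delicate constants in the statement come from.
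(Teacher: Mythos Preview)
Your overall architecture is correct and matches the paper's: the affine rescaling $\theta_{k+1}=2\theta_k-1$ (equivalently $\theta_k=2^k(\theta-(1-2^{-k}))$), the dichotomy between \cref{fine} and \cref{isoperimetric}, the pairwise disjointness of the intermediate sets $D_k$, the pigeonhole bound $N$ (the paper's $K$) on the number of iterations, and the choice $\eta=2^{-N}$ so that $2^k\eta\le1$ for all $k\le N$. The unwinding to $\zeta=2^{-N-O(1)}$ is also the same.

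The one place where your proposal diverges, and where there is a gap, is the cutoff compatibility. Your claim that \cref{fine} and \cref{isoperimetric} ``actually look'' only inside $B_2$ is not correct: the nonlocal operator $\half$ in the energy estimates sees the values of $\theta_k$ everywhere, and this is precisely why those lemmas are stated with the \emph{global} hypothesis $\theta\le 1+c$ (cf.\ the term $\|\half c\|_{L^\infty}$ in the derivation of the energy inequality in \cref{fine}). Your proposed workaround of absorbing the tail of $2^k c_\epsilon$ into the forcing $f_k$ via \cref{RHS} is vague as stated and would require a genuine additional argument.

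The paper resolves this more directly. Since the auxiliary lemmas use the fixed cutoff $c(x)\ge(|x|^{1/4}-2)_+$, one simply chooses $\epsilon$ small enough (depending on the already-determined $K$) so that
\[
2^K\bigl(|x|^\epsilon-2^{4\epsilon}\bigr)_+ \le \bigl(|x|^{1/4}-2\bigr)_+ \le c(x)\qquad\text{for all }x.
\]
Then $\theta_k\le 1+2^k c_\epsilon\le 1+2^K c_\epsilon\le 1+c$ holds \emph{globally} for every $k\le K$, and \cref{fine} and \cref{isoperimetric} apply verbatim to each $\theta_k$ with no tail correction needed. This is the role of the parameter $\epsilon$ in the statement, and it is the missing idea in your proposal.
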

\begin{proof}
Choose $K$ such that $K\alpha > |(-2,0)\times B_2|$,  and let $\eta=2^{-K}$.  Put ${\theta}_k=2^k({\theta}-(1-2^{-k}))$.  By scaling, ${\theta}_k$ solves the equation
$$ \partial_t {\theta}_k + {u} \cdot \grad {\theta}_k + \half {\theta}_k = 2^k f. $$
For $k\leq K$, 
$$||2^k (-\lap)^{-\frac{1}{4}} f||_{L^\infty(C^\frac{1}{2})} \leq C^*.$$
Choose $\epsilon<<\frac{1}{4}$ to be small enough such that 
$$ 2^K(|x|^\epsilon-2^{4\epsilon})_+ \leq (|x|^\frac{1}{4}-2)_+ \leq c(x)$$ 
for all $x$.  Note that since $k\leq K$ we have 
$$ {\theta}_k(x) \leq 1 + 2^K c_{\epsilon}(x) \leq 1 + c(x).$$
Fix $k\leq K$ now, and suppose that 
\begin{equation}\label{contradiction}
| \{ {\theta}_{j+1} > 0 \}\cap ([-1,0]\times B_1) | \geq \delta.
\end{equation}
  for all $j\leq k$.  This implies that 
$$ |\{{\theta}_j > \frac{1}{2} \}\cap ([-1,0]\times B_1)| \geq \delta .$$
Since $|\{{\theta}_j \leq 0 \} \cap \{[-2,-1]\times B_1 \}|\geq \beta $ for all $j$, we have that by \cref{isoperimetric}, 
$$ |\{ \phi_0< \theta_j \leq \phi_1 \}\cap([-2,0]\times B_2)|\geq \alpha .$$
Noticing that the sets $\{ \phi_0 < \theta_j \leq \phi_1 \}$, $\{ \phi_0 < \theta_{j'} \leq \phi_1 \}$ are disjoint for $j \neq j'$, we have that \eqref{contradiction} cannot hold for $k=K$ by choice of $K$.  So there must exist $k<K$ for which 
$$| \{ {\theta}_{k+1} > 0 \}\cap ([-1,0]\times B_1) | < \delta .$$
 Then by \cref{fine}, $\theta_{k+1}\leq\frac{1}{2}$ in $[-\frac{1}{2},0]\times B_\frac{1}{2}$, and $\theta \leq 1-2^{-(2+K)}$ in $[-\frac{1}{2},0]\times B_\frac{1}{2}$, proving the claim with $\zeta = 2^{-(2+K)}$.  
\end{proof}

We have arrived at \cref{decinosccor} as an easy corollary.
  
\begin{lemma}\label{decinosccor}
If $-1-c_\epsilon \leq {\theta} \leq 1+c_\epsilon$ on $[-2,0]\times \mathbb{R}^2$ and the conditions of \cref{decinosc} are satisfied, then on $[-\frac{1}{2},0]\times B_\frac{1}{2}$, 
$$\sup \theta - \inf \theta \leq 2-\zeta .$$
\end{lemma}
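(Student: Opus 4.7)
The plan is to reduce to \cref{decinosc} by a dichotomy: at least one of the sets $\{\theta \le 0\}$ or $\{\theta \ge 0\}$ must occupy at least half of the measure of $[-2,-1]\times B_1$, so one of $\theta$ or $-\theta$ satisfies the hypothesis of the decrease-in-oscillation lemma. Set $\beta_0 = \tfrac{1}{2}|[-2,-1]\times B_1|$ and invoke \cref{decinosc} with $\beta = \beta_0$, obtaining the corresponding constants $\zeta, \epsilon, \eta$.

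First I would record the elementary observation that $c_\epsilon(x) = (|x|^\epsilon - 2^{4\epsilon})_+$ vanishes on $B_{1/2}$, since $|x|^\epsilon \le 1 \le 2^{4\epsilon}$ there. Hence the ambient bounds $-1-c_\epsilon \le \theta \le 1+c_\epsilon$ reduce on $B_{1/2}$ to $-1 \le \theta \le 1$, which we will use to complement the one-sided improvement furnished by \cref{decinosc}.

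Next, I would split into two cases. If $|\{\theta \le 0\} \cap ([-2,-1]\times B_1)| \ge \beta_0$, then \cref{decinosc} applied directly to $\theta$ (whose hypotheses are precisely those assumed here) yields $\theta \le 1 - \zeta$ on $[-\tfrac{1}{2},0]\times B_{1/2}$. Combined with $\theta \ge -1$ on $B_{1/2}$, this gives
\[
\sup \theta - \inf \theta \le (1-\zeta) - (-1) = 2 - \zeta.
\]
Otherwise $|\{\theta \ge 0\} \cap ([-2,-1]\times B_1)| \ge \beta_0$, and I would apply \cref{decinosc} to $\tilde\theta := -\theta$. By linearity $\tilde\theta$ solves the equation with divergence-free drift $u$ and forcing $-f$, for which the hypotheses in \cref{decinosc} are invariant (the norms of $-f$ equal those of $f$). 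The upper bound $\tilde\theta \le 1+c_\epsilon$ is exactly the lower bound on $\theta$ assumed in the statement, and $|\{\tilde\theta \le 0\} \cap ([-2,-1]\times B_1)| \ge \beta_0$ by choice of case. Thus $\tilde\theta \le 1-\zeta$, i.e.\ $\theta \ge -1+\zeta$ on $[-\tfrac{1}{2},0]\times B_{1/2}$, and together with $\theta \le 1$ on $B_{1/2}$ we again conclude $\sup \theta - \inf \theta \le 2 - \zeta$.

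There is no real obstacle here — the whole difficulty has been absorbed into \cref{decinosc}; the corollary is a clean case-analysis combining the symmetric application to $\pm\theta$ with the trivial vanishing of $c_\epsilon$ on $B_{1/2}$.
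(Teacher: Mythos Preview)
Your argument is correct and is exactly the standard dichotomy that the paper has in mind when it calls this an ``easy corollary'' without giving a proof: apply \cref{decinosc} to $\theta$ or to $-\theta$ according to which of $\{\theta\le 0\}$, $\{\theta\ge 0\}$ fills at least half of $[-2,-1]\times B_1$, and pair the resulting one-sided improvement with the trivial bound coming from $c_\epsilon\equiv 0$ on $B_{1/2}$. There is nothing to add.
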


We can now prove the main regularity estimate for $\dnu$. 

\begin{proof}[Proof of \cref{holder}]
Throughout the argument, $\theta$, $u$, and $f$ correspond to (dilated versions of) $\dnu$, $\grad^\perp\Psi$, and $\lap\Psi_2$, respectively. The regularity assumptions on each of the function in the De Giorgi lemmas is provided by \cref{apriori}; we give the details in the proof as they appear.  Recall that the local existence theorem guarantees the existence of some time $\bar{T}$ such that $(QG)$ admits a smooth solution on $[0,\bar{T}]$.  We assume that $\nabla\Psi\in L^\infty([0,T];H^\frac{5}{2}(\mathbb{R}^2))$ is a solution to $(QG)$, and $T\geq\bar{T}$.  We then choose $(t_0,x_0)\in [0,T]\times\mathbb{R}^2$ such that $t_0\geq\frac{\bar{T}}{2}$.   We define $K_0= \inf(1,\frac{t_0}{4})$.  Put $\theta_0(t,x)= \dnu(t_0+K_0t, x_0+K_0x) $, $u_0(t,x)= \nabla^\perp\Psi(t_0+K_0t, x_0+K_0x) $, and $f_0(t,x)=\lap \Psi_{2}(t_0+K_0t, x_0+K_0x)$.  Then by the a priori estimates in (5) and (6) of \cref{apriori}, $\theta_0$ (and $-\theta_0$) satisfy the assumptions of \cref{rough}, and we have that $\theta_0 \in L^\infty([-1,0]\times\mathbb{R}^2)$.  Since the argument is translation invariant in space and we can only need to consider times $t\in[\frac{\bar{T}}{2},T]$, we have in fact that $\theta \in L^\infty([0,T]\times\mathbb{R}^2)$, with $||\theta||_{L^\infty([0,T]\times\mathbb{R}^2)}$ depending only on $||\nabla\Psi_0||_{H^3(\mathbb{R}_+^3)}$.

Continuing to fix $(t_0, x_0)$ and $K_0$ as above, we will show that $\theta$ is H\"{o}lder continuous at $(t_0, x_0)$.  We will inductively define a sequence of dilated functions for some factor of dilation $K$ to be determined later.  Let $\Gamma_{1}(t)$ be the solution to the ODE
$$  \left\{
       \begin{array}{@{}l@{\thinspace}l}
       \dot{\Gamma}_{1}(t) = \dashint_{B_1(\Gamma_{1}(t))} u_0(Kt,Ky)\,dy\\
       \Gamma_{1}(0) = 0\
       \end{array}  \right.  $$
and put
$$ \theta_{1}(t,x) = \frac{\theta_0(Kt, Kx+\Gamma_1(t))}{||\theta||_{L^\infty}} $$
$$ u_1(t,x) = u_0(Kt, Kx+\Gamma_1(t)) $$
$$ \lap \Psi_{2,1}(t,x) = \frac{1}{||\theta||_{L^\infty}}f_0(Kt, Kx+\Gamma_1(t)) .$$
For $k>1$, define
$$  \left\{
       \begin{array}{@{}l@{\thinspace}l}
       \dot{\Gamma}_{k+1}(t) = \dashint_{B_1(\Gamma_{k+1}(t))} u_k(Kt,Ky)\,dy-\dashint_{B_1(0)}u_k(t)\\
       \Gamma_{k+1}(0) = 0\
       \end{array}  \right.  $$

 $$\theta_{k+1}(t,x) = \frac{1}{1-\frac{\zeta}{4}}\left( \theta_k\left(Kt, Kx+\Gamma_{k+1}(t)\right)- \frac{1}{2}\left( \sup_{[-\frac{1}{4},0]\times {B_\frac{1}{4}}} \theta_k + \inf_{[-\frac{1}{4},0]\times {B_\frac{1}{4}}} \theta_k \right)\right)$$ 
$$ u_{k+1}(t,x) = u_k\left(Kt, Kx + \Gamma_{k+1}(t)\right) $$  
$$ \lap \Psi_{2,k+1}(t,x) = \lap \Psi_{2,k}(Kt, Kx+\Gamma_{k+1}(t)) .$$
We have that $\theta_k$ solves the equation
$$ \partial_t \theta_k + (u_k-\dashint_{B_1(0)}u_k )\cdot \grad \theta_k + \half \theta_k = \left(\frac{K}{(1-\frac{\zeta}{4})}\right)^k \lap \Psi_{2,k} .$$
Examining the assumptions of the De Giorgi lemmas (\cref{fine}, \cref{isoperimetric}, \cref{decinosc}, and \cref{decinosccor}), we see that $K$ is subject to the following constraints.  

\begin{enumerate}
\item $K$ needs to be small enough to satisfy  
$$ \frac{1}{1-\frac{\zeta}{2}} c_\epsilon(Kx) < c_\epsilon (x) $$
for all $x\geq \frac{1}{K}$.  Recalling that $c_\epsilon(x)= (|x|^\epsilon-2^{4\epsilon})_+$, choosing $K\leq {1-\frac{\zeta}{2}}$ satisfies this constraint.
\item $K$ should be small enough so that $ \left(\frac{K}{(1-\frac{\zeta}{4})}\right)^k \lap \Psi_{2,k}$ satisfies the assumptions of the De Giorgi lemmas uniformly in $k$.  Specifically, we need $(-\lap)^{-\frac{1}{4}}\left(\left(\frac{K}{(1-\frac{\zeta}{4})}\right)^k \lap \Psi_{2,k}\right) \in L^\infty([-2,0];C^\frac{1}{2}(\mathbb{R}^2))$ to have small norm.  Applying $(-\lap)^{-\frac{1}{4}}$ divides by a factor of $K^\frac{k}{2}$, but we can choose $K$ to be very small compared to $(1-\frac{\zeta}{4})$, and by (3) and (5) of \cref{apriori}, we can choose $K$ to satisfy this constraint. 
\item We must ensure that $u_k-\dashint_{B_1(0)}u_k$ satisfies the assumptions of the De Giorgi lemmas uniformly in $k$.  Specifically, we must have that $u_k - \dashint_{B_1(0)} u_k \in L^\infty([-2,0];L^4(B_2(0)))$ uniformly in $k$.  Using that $\grad^\perp \Psi_1$ is related to $\theta$ by the Riesz transform, the $L^\infty$ bound on $\theta$, part (7) of \cref{apriori}, parts (1) and (2) of \cref{bmofacts}, and the scale invariance of the $\BMO$ norm, this condition is satisfied independent of $K$.
\item Notice that each successive dilation includes a change of variables which follows the new flow of the dilated drift term.  At the $k^{th}$ iteration, we will obtain a decrease in oscillation for $\theta_k$ on the set $[-\frac{1}{2},0]\times B_\frac{1}{2}(0)$.  Then after dilating by $K$ and shifting according to $\Gamma_{k+1}$, we must ensure that $-1-c_\epsilon \leq \theta_{k+1} \leq 1+c_\epsilon$.  Applying \cref{bmofacts}, we have that $|\dot{\Gamma}_{k+1}| < C$ for some fixed constant $C$.  Therefore we can choose $K$ small enough so that zooming in by a factor of $K$ and then shifting according to the new drift gives that $-1-c_\epsilon \leq \theta_{k+1} \leq 1+c_\epsilon$.  
\end{enumerate}

We choose $K$ to satisfy the above constraints.  Thus we have that $\{\theta_k\}_{k=1}^\infty$ satisfies the assumptions of the De Giorgi lemmas uniformly in $k$, and we obtain a decrease in oscillation of $1-\frac{\zeta}{4}$ for each successive iteration.  To see that $\theta$ is H\"{o}lder continuous, put
$$U_k= \sup_{[-2,0]}{\hspace{.05in} |\dot{\Gamma}_k(t)|}$$ 
and notice that the set $ [-\frac{1}{2},0]\times B_{\frac{1}{2}}(\Gamma_k(t)) $ contains the rectangle $[-\frac{1}{4U_k},0]\times B_{\frac{1}{4}}(0)$.  By \cref{bmofacts}, there exists $U$ such that $U_k \leq U$ for all $k$. Putting $D=\min{(\frac{K}{4}, \frac{1}{8U})}$, we have that if $(t,x)$ is such that
 $$|(t,x)-(t_0,x_0)|\approx D^{k}$$
 then
  $$|\theta(t,x)-\theta(t_0,x_0)| \leq  \left(1-\frac{\zeta}{4}\right)^k .$$
Therefore we have that $\theta$ is H\"{o}lder continuous at $(t_0,x_0)$ with exponent 
 $$ r = \frac{\log(1-\frac{\zeta}{4})}{\log(D)} .$$
We have that $r$ depends only on the parameters $M$ and $C^*$, which in turn depend only on $||\nabla\Psi_0||_{H^3(\mathbb{R}_+^3)}$. In addition, $r$ does not depend on the choice of $(t_0,x_0)$; in particular, $\theta$ is uniformly $C^r$ throughout the interval $[0,T]\times\mathbb{R}^2$, so the lemma is complete.  
\end{proof}

\section{Bootstrapping}

We now show that $\dnu_1(t,x) \in L^\infty([0,T];\Bdot_{\infty,\infty}^{1}(\mathbb{R}^2))$, which will give that $\grad^\perp \Psi_1 \in L^\infty([0,T]\times[0,\infty);\Bdot_{\infty,\infty}^{1}(\mathbb{R}^2))$. Here $[0,T]\times [0,\infty)$  denotes $t$ and $z$, and $\mathbb{R}^2$ includes points $x=(x_1,x_2)$ belonging to flat planes $z=z_0$.  Due to the fact that the Poisson kernel is the fundamental solution of the equation
\begin{equation}\label{Poisson}
\partial_t \theta + \half \theta = 0
\end{equation}
we need the following two lemmas. These lemmas provide estimates on the regularity of the solution to an     inhomogeneous version of \cref{Poisson}.  Let us remark that in the case of critical SQG, one can use either potential theory in the style of \cite{cv} or Littlewood-Paley arguments in the style of \cite{cw} to bootstrap the regularity.  Also, both potential theory and Littlewood-Paley arguments can be used to show the sharp $\Bdot^1_{\infty,\infty}$ bound coming from the forcing term.  However, the most direct method in our situation seems to be to use potential theory for the nonlinear term and Littlewood-Paley arguments for the forcing.    

Roughly speaking, the following lemma will show that the regularity of the nonlinear terms is additive;  if $\theta$ is H\"{o}lder continuous in space-time with exponent $\alpha_1$ and $u$ in space with exponent $\alpha_2$, then the convolution of their product with the Poisson kernel in space-time is H\"{o}lder continuous in space with exponent $\alpha_1+ \alpha_2$.  Let us give an intuition as to why such a statement should hold.  Given functions $f\in C^{\alpha_1}$, $g\in C^{\alpha_2}$, then $fg\in C^{\alpha_1 \wedge \alpha_2}$.  However, if $f(x_0)=g(x_0)=0$, then 
$$ |f(x)g(x) -f(x_0)g(x_0)| = |(f(x)-f(x_0))(g(x)-g(x_0))|\leq |x-x_0|^\alpha, $$
and so $fg$ is $C^\alpha$ at $x_0$.  If we are trying to increase the regularity at $(t_0,x_0)$, we can ensure that $\theta(t_0,x_0)=u(t,x_0)=0$ for all $t\in[0,t_0]$ by performing a change of variables which follows the characteristics.  Then the nonlinear term is effectively $C^\alpha$, allowing us to bootstrap the regularity up to the space $\Bdot_{\infty,\infty}^1$.  

\begin{lemma}\label{nonlinear}
Let $f(t,x) \in C^{\alpha_1}([0,t_0] \times \mathbb{R}^2)$, $h(t,x) \in L^\infty([0,t_0];C^{\alpha_2}(\mathbb{R}^2))$, and $ f(t_0,0) = h(t,0) = 0 $ for all $t\in[0,t_0]$.  Let $\mathcal{P}(t,x)$ be the Poisson kernel (extended to equal $0$ for $t$ negative).  Let $\alpha=\alpha_1+\alpha_2$, and define
$$ g(t,x) = \int_0^t \int_{\mathbb{R}^2} \mathcal{P}(t-s,x-y) \dive (h(s,y) f(s,y)) \,dy\,ds . $$  
\begin{enumerate}
\item If $0<\alpha<1$, then $$ \sup_{x\in\mathbb{R}^2} \frac{|g(t_0,x) - g(t_0,0)|}{|x|^{\alpha}} < C  || f ||_{C^{\alpha_1}} ||h||_{L^\infty(C^{\alpha_2})} .$$
\item If $1 \leq \alpha <2$, then $$ \sup_{x\in\mathbb{R}^2} \frac{|g(t_0,x) - 2g(t_0,0) + g(t_0,-x)|}{|x|^{\alpha}} < C  || f ||_{C^{\alpha_1}} ||h||_{L^\infty(C^{\alpha_2})} .$$
\end{enumerate}
\end{lemma}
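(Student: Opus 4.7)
The plan is to integrate by parts in $y$ to move the divergence onto the Poisson kernel, giving
$$g(t_0,x)-g(t_0,0)=\int_0^{t_0}\int_{\mathbb{R}^2}\bigl[\nabla\mathcal{P}(t_0-s,x-y)-\nabla\mathcal{P}(t_0-s,-y)\bigr]\cdot h(s,y)f(s,y)\,dy\,ds,$$
and then to combine the vanishing of $h$ on $\{y=0\}$ and of $f$ at $(t_0,0)$ with cancellation in the kernel difference. The hypotheses yield $|h(s,y)|\le C\|h\||y|^{\alpha_2}$, $|f(s,y)|\le C\|f\|\bigl((t_0-s)+|y|\bigr)^{\alpha_1}$, and consequently $|(hf)(s,y)|\le C\|h\|\|f\|\,|y|^{\alpha_2}\bigl((t_0-s)+|y|\bigr)^{\alpha_1}$, while the Poisson kernel obeys $|\nabla^k\mathcal{P}(\tau,z)|\lesssim(\tau+|z|)^{-(2+k)}$. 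Setting $r=|x|$ I split the $(s,y)$-integration into $R_1=\{|y|\le r/2\}$ (containing the singularity of $\nabla\mathcal{P}(t_0-s,-y)$ at $(t_0,0)$), $R_2=\{|y-x|\le r/2\}$ (containing the singularity of $\nabla\mathcal{P}(t_0-s,x-y)$ at $(t_0,x)$), and the complement $R_3$.

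On $R_1$, the singular kernel $\nabla\mathcal{P}(t_0-s,-y)$ is controlled by the vanishing factor $|y|^{\alpha_2}((t_0-s)+|y|)^{\alpha_1}$, reducing the contribution to a three-dimensional radial integral $\int_{|u|\lesssim r}|u|^{\alpha-3}\,du=Cr^\alpha$; the companion kernel $\nabla\mathcal{P}(t_0-s,x-y)$ is regular on $R_1$, and a direct volume bound also produces $Cr^\alpha$. On the outer portion $|y|>2r$ of $R_3$, I apply the mean value theorem to the kernel difference to obtain $|\nabla\mathcal{P}(t_0-s,x-y)-\nabla\mathcal{P}(t_0-s,-y)|\lesssim r/(\tau+|y|)^4$; combined with $|(hf)|\lesssim((t_0-s)+|y|)^\alpha$ this yields $r\int_{|u|>r}|u|^{\alpha-4}\,du\simeq r^\alpha$, finite precisely because $\alpha<1$ in~(1). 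The intermediate slab $|y|\in(r/2,2r)$, $|y-x|>r/2$ is handled by slab-volume estimates using $|\nabla\mathcal{P}|\lesssim(\tau+r)^{-3}$.

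The delicate region is $R_2$, where no vanishing of the integrand kills the singularity of $\nabla\mathcal{P}(t_0-s,x-y)$ at $(t_0,x)$ and the kernel difference does not cancel. Here I exploit the mean-zero property $\int_{\mathbb{R}^2}\nabla\mathcal{P}(\tau,z)\,dz=0$: after the substitution $z=x-y$,
\begin{align*}
\int_{|z|\le r/2}\nabla\mathcal{P}(\tau,z)\,(hf)(t_0-\tau,x-z)\,dz
&=\int_{|z|\le r/2}\nabla\mathcal{P}(\tau,z)\bigl[(hf)(t_0-\tau,x-z)-(hf)(t_0-\tau,x)\bigr]dz\\
&\quad+(hf)(t_0-\tau,x)\int_{|z|\le r/2}\nabla\mathcal{P}(\tau,z)\,dz.
\end{align*}
The boundary-type term is bounded via $|(hf)(t_0-\tau,x)|\le Cr^\alpha$ together with the divergence-theorem estimate $\bigl|\int_{|z|\le r/2}\nabla\mathcal{P}(\tau,z)\,dz\bigr|\lesssim r/(\tau+r)^2$, which integrates in $\tau$ to $Cr^\alpha$. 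For the H\"older-remainder term I use the product decomposition $(hf)(x-z)-(hf)(x)=h(x-z)[f(x-z)-f(x)]+f(x)[h(x-z)-h(x)]$, which gives the pointwise bound $|\cdot|\lesssim r^{\alpha_2}|z|^{\alpha_1}+(\tau+r)^{\alpha_1}|z|^{\alpha_2}$; the extra $|z|^{\alpha_i}$ renders each inner integral absolutely convergent and the double integral evaluates to $Cr^\alpha$. The companion kernel $\nabla\mathcal{P}(t_0-s,-y)$ on $R_2$ is regular and handled directly.

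For case~(2), when $1\le\alpha<2$ the outer integral $\int_{|u|>r}|u|^{\alpha-4}\,du$ diverges, so the first-order difference is replaced by the second-order difference $g(t_0,x)-2g(t_0,0)+g(t_0,-x)$; the mean value theorem applied twice supplies an extra factor of $|x|$ and improves the kernel to $|\nabla^3\mathcal{P}|\lesssim(\tau+|y|)^{-5}$, so the exterior integral becomes $r^2\int_{|u|>r}|u|^{\alpha-5}\,du\simeq r^\alpha$, convergent for $\alpha<2$. The interior regions $R_1$ and $R_2$ are treated exactly as in case~(1) with the analogous modifications. The principal obstacle throughout is the analysis of $R_2$: only the mean-zero cancellation of $\nabla\mathcal{P}$, combined with the product decomposition of $hf$ into two pieces each carrying one of the H\"older factors, is able to absorb the singularity of the kernel at the point $(t_0,x)$, where the vanishing hypotheses on $h$ and $f$ provide no direct help.
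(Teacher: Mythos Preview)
Your proof is correct and follows the same strategy as the paper's: integrate by parts onto $\nabla\mathcal{P}$, split into near and far regions, use a first-order (respectively second-order) Taylor bound on the kernel difference for the far piece, and exploit the mean-zero property of $\nabla\mathcal{P}$ on the piece containing the singularity at $(t_0,x)$. The only structural difference is that the paper splits using the single space--time ball $B_{3|x|}(t_0,0)$ (so $I$, $II$, $III$ correspond roughly to your $R_1$, $R_2$, $R_3$, but with the time variable absorbed into the ball), whereas you split into spatial cylinders $\{|y|\le r/2\}$, $\{|y-x|\le r/2\}$ and their complement, integrating over all $\tau$. In the $II$/$R_2$ region the paper's subtraction is the two-term identity $hf = (h-h(s,0))(f-f(t_0,x)) + (h-h(s,x))(f(t_0,x)-f(t_0,0))$, while you subtract the single constant $(hf)(t_0-\tau,x)$ and then do the product split; both work.

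One small imprecision: your claim $|(hf)(t_0-\tau,x)|\le Cr^\alpha$ for the boundary-type term is not valid for $\tau\gg r$, since $f(t_0-\tau,x)$ only satisfies $|f(t_0-\tau,x)|\le C\|f\|(\tau+r)^{\alpha_1}$ via $f(t_0,0)=0$. Replace it by $|(hf)(t_0-\tau,x)|\le C\|h\|\|f\|\,r^{\alpha_2}(\tau+r)^{\alpha_1}$; then
\[
\int_0^{t_0} r^{\alpha_2}(\tau+r)^{\alpha_1}\cdot\frac{r}{(\tau+r)^2}\,d\tau
= r^{\alpha_2+1}\int_0^{t_0}(\tau+r)^{\alpha_1-2}\,d\tau \lesssim r^{\alpha_2+1}\cdot r^{\alpha_1-1}=r^\alpha,
\]
which is the bound you want. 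With this fix your argument goes through.
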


\begin{proof}
Before starting, we remark that the gradient of the Poisson kernel 
$$ \nabla_x \mathcal{P}(t,x) = C \frac{tx}{(|x|^2+t^2)^\frac{5}{2}} $$
is homogenous of degree 3 in $(-\infty,\infty)\times\mathbb{R}^2$.  Using the fact that it is smooth away from the origin and has mean value zero in space over any set $\{t\} \times B(r,0) $, we see that $\nabla_x \mathcal{P}$ is a singular integral in space-time. 
Beginning with the first case, we integrate by parts and split the integral around the singularity to obtain 
\begin{align*}
 g(t_0,0) - g(t_0,x) &= \int_0^{t_0} \int_{\mathbb{R}^2} ( \mathcal{P}(t_0-s,-y) - \mathcal{P}(t_0-s, x-y) ) \dive (h(s,y) f(s,y)) \,dy\,ds \\
 &= - \int_0^{t_0} \int_{\mathbb{R}^2} ( \nabla_x \mathcal{P}(t_0-s,-y) - \nabla_x\mathcal{P}(t_0-s, x-y) )  h(s,y) f(s,y) \,dy\,ds \\
 &= - \iint_{B_{3|x|}(t_0,0)}  \nabla_x\mathcal{P}(t_0-s,-y)  h(s,y) f(s,y) \,dy\,ds \\
 &\qquad  + \iint_{B_{3|x|}(t_0,0)}  \nabla_x\mathcal{P}(t_0-s,x-y)  h(s,y) f(s,y) \,dy\,ds \\
 &\qquad - \iint_{(B_{3|x|}(t_0,0))^\complement} ( \nabla_x\mathcal{P}(t_0-s,-y) - \nabla_x\mathcal{P}(t_0-s, x-y) ) h(s,y) f(s,y) \,dy\,ds \\
 &= I + II + III
\end{align*}
We start with $I$; using the fact that $f(t_0,0) = h(s,0)=0$, we integrate in polar coordinates in space-time to obtain 
\begin{align*}
I &= - \iint_{B_{3|x|}(t_0,0)}  \nabla_x\mathcal{P}(t_0-s,-y)  (h(s,y)-h(s,0)) (f(s,y)-f(t_0,0)) \,dy\,ds \\
  &\leq C ||h||_{L^\infty(C^{\alpha_2})}||f||_{C^{\alpha_1}} \int_0^{3|x|} r^{\alpha_1+\alpha_2-1} \,dr\\
  &= C||h||_{L^\infty(C^{\alpha_2})}||f||_{C^{\alpha_1}}|x|^{\alpha}.
\end{align*}

Moving to $II$, note that by the mean value condition on $\nabla_x \mathcal{P}$ and the assumptions on $u$ and $\theta$, 
\begin{align*}
II = \iint_{B_{3|x|}(t_0,0)}  &\nabla_x\mathcal{P}(t_0-s,x-y)  h(s,y) f(s,y) \,dy\,ds  \\
&=\iint_{B_{3|x|}(t_0,0)} \nabla_x\mathcal{P}(t_0-s,x-y) \left[ (h(s,y)-h(s,0))(f(s,y)-f(t_0,x)) \right]\,dy\,ds\\
&+ \iint_{B_{3|x|}(t_0,0)} \nabla_x\mathcal{P}(t_0-s,x-y) \left[ (h(s,y)-h(s,x))(f(t_0,x)-f(t_0,0)) \right]\,dy\,ds\\
& \leq ||h||_{L^\infty(C^{\alpha_2})} |x|^{\alpha_2} ||f||_{C^{\alpha_1}} \int_0^{3|x|} r^{\alpha_1-1} dr + ||f||_{C^{\alpha_1}} |x|^{\alpha_1} ||h||_{L^\infty(C^{\alpha_2})} \int_0^{3|x|} r^{\alpha_2-1} dr \\
& \leq C||h||_{L^\infty(C^{\alpha_2})}||f||_{C^{\alpha_1}}|x|^{\alpha}.
\end{align*}
Finally, since the domain of integration for $III$ is a fixed distance away from the singularity, a first order space-time Taylor estimate on $\nabla_x \mathcal{P}$ gives that on the domain of integration,
$$ |\mathcal{P}(t_0-s,x-y)-\mathcal{P}(t_0-s, -y)| \leq \frac{|x|}{|(t_0-s,x-y)|^4} .$$
Therefore, using the properties of $u$, $\theta$, and $\nabla_x \mathcal{P}$ gives
\begin{align*}
III &\leq \iint_{(B_{3|x|}(t_0,0))^\complement}  \big{|} ( \nabla_x\mathcal{P}(t_0-s,-y) - \nabla_x\mathcal{P}(t_0-s, x-y) ) \cdot \\
    &\qquad \qquad (h(s,y)-h(s,0) (f(s,y)-f(t_0,0) \big{|} \,dy\,ds \\
    &\leq ||h||_{L^\infty(C^{\alpha_2})}||f||_{C^{\alpha_1}} \int_{3|x|}^{\infty} \frac{|x|^{\alpha+1}}{r^2} \,dr \\
    &\leq C ||h||_{L^\infty(C^{\alpha_2})}||f||_{C^{\alpha_1}}|x|^{\alpha}
\end{align*}
Combining estimates for $I$, $II$, and $III$ gives the result.  

We now consider the case $1\leq \alpha < 2$.  As before, we integrate by parts and split the integral into two pieces;  
\begin{align*}
g(t_0,x) - 2g(t_0,0) + g(t_0,-x) &=  \int_0^{t_0}\int_{\mathbb{R}^2} ( \mathcal{P}(t_0-s,x-y) - 2\mathcal{P}(t_0-s, -y) \\ 
&\qquad +\mathcal{P}(t_0-s,-x-y) ) \cdot \dive (h(s,y) f(s,y)) \,dy\,ds  \\
&= - \iint_{B(3|x|,0)} ( \nabla_x \mathcal{P}(t_0-s,x-y) - 2 \nabla_x \mathcal{P}(t_0-s, -y) \\ 
&\qquad \qquad + \nabla_x \mathcal{P}(t_0-s,-x-y) ) \cdot h(s,y) f(s,y) \,dy\,ds  \\
&\quad - \iint_{(B(3|x|,0))^\complement} ( \nabla_x \mathcal{P}(t_0-s,x-y) - 2\nabla_x \mathcal{P}(t_0-s, -y) \\ 
&\qquad \qquad +\nabla_x \mathcal{P}(t_0-s,-x-y) ) \cdot  h(s,y) f(s,y) \,dy\,ds  \\
&= I + II
\end{align*}
For the first piece, noticing that 
$$ g(t_0,x) - 2g(t_0,0) + g(t_0,-x) = (g(t_0,x)-g(t_0,0)) - (g(t_0,0)-g(t_0,-x)) $$
we can use the local estimate from the first part to conclude that $I\leq C||h||_{L^\infty(C^{\alpha_2})}||f||_{C^{\alpha_1}}|x|^{\alpha}$.  

For $II$, we can use the fact that 
$$ \nabla_x \mathcal{P}(t_0-s,x-y) - 2\nabla_x \mathcal{P}(t_0-s, -y) + \nabla_x \mathcal{P}(t_0-s,-x-y) $$
vanishes to first order.  Since the domain of integration in $II$ avoids the singularity, a second order space-time Taylor expansion gives that in the domain of integration,
$$ | \mathcal{P}(t_0-s,x-y) - 2\mathcal{P}(t_0-s, -y) +\mathcal{P}(t_0-s,-x-y) | \leq \frac{|x|^2}{|(t_0-s, -y)|^5}.$$
Therefore, we can write 
\begin{align*}
II &\leq \iint_{(B_{3|x|}(t_0,0))^\complement}  \big{|} ( \nabla_x\mathcal{P}(t_0-s,x-y) - 2\nabla_x\mathcal{P}(t_0-s, -y) +\nabla_x\mathcal{P}(t_0-s, -x-y) ) \cdot \\
    &\qquad \qquad (h(s,y)-h(s,0) (f(s,y)-f(t_0,0) \big{|} \,dy\,ds \\
    &\leq ||h||_{L^\infty(C^{\alpha_2})}||f||_{C^{\alpha_1}}\int_{3|x|}^{\infty} \frac{|x|^{\alpha+2}}{r^3} \,dr \\
    &\leq C||h||_{L^\infty(C^{\alpha_2})}||f||_{C^{\alpha_1}} |x|^{\alpha}
\end{align*}
concluding the proof of the second part.

\end{proof}

We provide now a short proof of the estimate needed for the right hand side.  

\begin{lemma}\label{rhs}
Let $\omega \in L^\infty([0,T];\Bdot^1_{\infty,\infty}(\mathbb{R}^2)) $, and define for $t\in[0,T]$
$$ g(t,x) = \int_0^t \int_{\mathbb{R}^2} \mathcal{P}(t-s,x-y) \dive (\omega(s,y)) \,dy \,ds .$$
Then $g \in L^\infty([0,T];\Bdot^1_{\infty,\infty}(\mathbb{R}^2))$.  
\end{lemma}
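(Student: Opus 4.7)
The plan is to pass to the Littlewood-Paley blocks and use two basic facts: that $\dive$ and the Poisson semigroup $e^{-t\half}$ are Fourier multipliers which commute with $\Delta_j$, and that the Poisson semigroup decays exponentially when restricted to a dyadic frequency annulus. Everything then reduces to a one-dimensional time integral that is uniformly bounded in $j$.

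First, since the convolution against $\cal{P}(t-s,\cdot)$ is precisely $e^{-(t-s)\half}$ acting on $\dive\omega(s,\cdot)$, and since this operator, $\dive$, and $\Delta_j$ are all Fourier multipliers in $x$, they commute. Thus
$$ \Delta_j g(t,x) = \int_0^t e^{-(t-s)\half} \Delta_j \dive\, \omega(s,\cdot)(x)\, ds. $$
The first step is to bound $\|\Delta_j \dive\, \omega(s)\|_{L^\infty}$. By \cref{bernstein}, $\|\Delta_j \dive\, \omega(s)\|_{L^\infty} \leq C 2^j \|\Delta_j \omega(s)\|_{L^\infty}$. Since $\omega(s) \in \Bdot^1_{\infty,\infty}(\mathbb{R}^2)$, we have $\|\Delta_j \omega(s)\|_{L^\infty}\leq 2^{-j}\|\omega(s)\|_{\Bdot^1_{\infty,\infty}}$, so
$$ \|\Delta_j \dive\, \omega(s)\|_{L^\infty} \leq C\|\omega(s)\|_{\Bdot^1_{\infty,\infty}} \leq C\|\omega\|_{L^\infty([0,T];\Bdot^1_{\infty,\infty})}. $$

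Next, I would establish the Poisson-semigroup decay on frequency-localized pieces: if $h = \Delta_j h$ has Fourier support in the annulus $\{|\xi|\sim 2^j\}$, then
$$ \|e^{-\tau\half} h\|_{L^\infty} \leq C e^{-c\tau 2^j}\|h\|_{L^\infty}. $$
This follows from \cref{bernstein}(1) applied to the symbol $\sigma(\xi)=e^{-\tau|\xi|}$, which on $\{|\xi|\sim 2^j\}$ satisfies $|D^\alpha \sigma(\xi)| \leq C_\alpha e^{-c\tau 2^j}|\xi|^{-|\alpha|}$ for $\tau 2^j \geq 1$ (and is trivially bounded by a constant for $\tau 2^j \leq 1$, which can be absorbed into the exponential).

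Combining the two bounds gives
$$ \|\Delta_j g(t)\|_{L^\infty} \leq C \|\omega\|_{L^\infty([0,T];\Bdot^1_{\infty,\infty})} \int_0^t e^{-c(t-s)2^j}\, ds \leq C 2^{-j} \|\omega\|_{L^\infty([0,T];\Bdot^1_{\infty,\infty})}, $$
where the constant on the right is independent of $j$ and of $t\in[0,T]$. Taking the supremum over $j\in\mathbb{Z}$ yields $\|g(t)\|_{\Bdot^1_{\infty,\infty}}\leq C\|\omega\|_{L^\infty([0,T];\Bdot^1_{\infty,\infty})}$, proving the claim. The only non-routine point is the exponential-decay estimate for the Poisson semigroup on frequency blocks, but this is a direct consequence of the Bernstein multiplier theorem stated in \cref{bernstein}.
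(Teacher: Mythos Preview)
Your proof is correct and follows essentially the same approach as the paper: commute $\Delta_j$ with the Fourier multipliers, use the exponential decay $\|e^{-\tau\half}\Delta_j h\|_{L^\infty}\le Ce^{-c\tau 2^j}\|\Delta_j h\|_{L^\infty}$, and integrate in time to produce the factor $2^{-j}$. The only cosmetic difference is that the paper moves the divergence onto the kernel via an auxiliary fattened projector $\tilde\Delta_j$ and bounds $\|\nabla_x\tilde\Delta_j\mathcal{P}(t-s,\cdot)\|_{L^1}\le C2^j e^{-(t-s)2^j}$ in one stroke, whereas you separate the Bernstein bound on $\Delta_j\dive$ from the semigroup decay; both routes yield the same estimate.
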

\begin{proof}
We must show that $\sup_j 2^j ||\Delta_j g||_{L^\infty} < \infty$.  Recall that $\Delta_j$ is a dilation in frequency by a factor of $2^j$ of a Fourier multiplier which isolates frequences on an annulus of radius $1$. We let $\tilde{\Delta}_j$ be a dilation by a factor of $2^j$ of a Fourier multiplier which strictly contains the annulus of radius $1$, ensuring that the frequency support of $\Delta_j$ is contained inside that of $\tilde{\Delta}_j$.  Then we can write
\begin{align*}
\Delta_j g(t,x) &= \int_0^t \int_{\mathbb{R}^2} \mathcal{P}(t-s,x-y) \dive(\Delta_j \omega(s,y)) \,dy \,ds \\
                &= \int_0^t \int_{\mathbb{R}^2} \nabla_x \tilde{\Delta}_j \mathcal{P}(t-s,x-y) \Delta_j \omega(s,y) \,dy \,ds \\
                &\leq C \int_0^t 2^j e^{-(t-s)2^j} 2^{-j} || \omega(s,\cdot) ||_{\Bdot^1_{\infty,\infty}} \,ds\\
                &\leq C 2^{-j} ||\omega||_{L^\infty(\Bdot^1_{\infty,\infty})} \\
\end{align*}
\end{proof}

We can now show that the regularity of $\dnu$ can be bootstrapped all the way to $\Bdot^1_{\infty,\infty}$.  Let $\Psi$ be a strong solution to (QG) on $[0,T]$.  We have that $\dnu=\theta$ satisfies
$$ \partial_t \theta  + \half \theta =- u \cdot \grad \theta + \lap \Psi_2. $$
From \cref{holder}, we have that $\theta\in C^r([0,T]\times \mathbb{R}^2)$.  From the Riesz transform, we have also that $\grad^\perp\Psi_1 |_{z=0} \in L^\infty([0,T]; C^r(\mathbb{R}^2))$.  By interpolating (4) and (7) from \cref{apriori}, we have that for all $\alpha<1$, $\nabla^\perp\Psi_2 |_{z=0} \in L^\infty([0,T]; C^\alpha(\mathbb{R}^2)) \cap L^\infty([0,T]; \Bdot^1_{\infty,\infty}(\mathbb{R}^2))$.  We can combine \cref{nonlinear} with \cref{rhs} to show that $\theta\in L^\infty(\Bdot^1_{\infty,\infty})$.  In order to apply \cref{nonlinear}, we fix $(t_0,x_0)$ and perform a change of variables which follows the flow.  Specifically, let 
$$  \left\{
       \begin{array}{@{}l@{\thinspace}l}
       \dot{\Gamma}(t) = u(t,\Gamma(t))\\
       \Gamma(t_0) = x_0\
       \end{array}  \right.  $$
This trajectory is well-defined since we are on the interval for which (QG) has a smooth solution.  Crucially, the argument relies only on the existence of $\Gamma(t)$ and the boundedness of $\dot{\Gamma}(t)$, not the smoothness.  Define 
$$ \tilde{\theta}(t,x) = \theta(t,x+\Gamma(t)) - \theta(t_0,x_0)$$
$$ \tilde{u}(x,t) = u(t,x+\Gamma(t)) $$
$$ \lap \tilde{\Psi}_2(t,x) = \lap \Psi_2(t,x+\Gamma(t)) .$$
Then $\tilde{\theta}$ solves the equation 
$$ \partial_t \tilde{\theta}(t,x)  + \half \tilde{\theta}(t,x) = - (\tilde{u}(t,x)-\tilde{u}(t,0))\cdot \grad \tilde{\theta}(t,x) + \lap \tilde{\Psi}_2(t,x) .$$
 The norms of $\tilde{\theta}\in C^r([0,T]\times\mathbb{R}^2)$, $\tilde{u}\in L^\infty([0,T];C^r(\mathbb{R}^2))$, and $\nabla^\perp\tilde{\Psi}_2 |_{z=0} \in L^\infty([0,T]; C^\alpha(\mathbb{R}^2)) \cap L^\infty([0,T]; \Bdot^1_{\infty,\infty}(\mathbb{R}^2))$ are preserved under this change of variables since $\dot{\Gamma}(t)$ is bounded.  We split $\tilde{\theta} = g_0 + g_1 + g_2$, where
$$ g_0(t,x) = \tilde{\theta}(0,\cdot) \ast \mathcal{P}_t(\cdot) (x) $$
$$ \partial_t g_1 + \half g_1 =  - (\tilde{u}-\tilde{u}(t,0))\cdot \grad \tilde{\theta}$$
$$ \partial_t g_2 + \half g_2 = \lap \tilde{\Psi}_2 .$$
Since $g_0$ is a convolution with the Poisson kernel of a shifted version of $\tilde{\theta}$, its regularity depends only on that of the initial data. Focusing on the other two terms, we have that $g_1$ can be written using Duhamel's formula with $f(t,x)=\tilde{\theta}(t,x)$ and $h(t,x)=\tilde{u}(t,x)-\tilde{u}(t,0)$, satisfying the assumptions of \cref{nonlinear}.  Therefore, $g_1$ is $C^{2r}$ in space at $(t_0,x_0)$.  In addition, $g_2$ can also be written using Duhamel's formula with $\omega = \grad \tilde{\Psi_2}$, satisfying the assumptions of \cref{rhs}, and so $g_2 \in L^\infty(\Bdot^1_{\infty,\infty})$.  Repeating the argument for arbitrary $(t_0,x_0)$ and recalling that the difference quotient characterization of $\Bdot^1_{\infty,\infty}$ is locally stronger than $C^{2r}$ for any $2r<1$ shows that $\theta \in C^r([0,T]\times\mathbb{R}^2) \cap L^\infty([0,T]; C^{2r}(\mathbb{R}^2))$.  Applying the Riesz transform combined with \cref{corollaries} and \cref{equivalences} shows that $\grad^\perp \Psi_1|_{z=0} \in L^\infty([0,T]; C^{2r}(\mathbb{R}^2))$.  Recalling the a priori estimates in parts (4) and (7) of \cref{apriori}, we have also that $\nabla\Psi_2$, and therefore $u$, are in $L^\infty([0,T]; C^{2r}(\mathbb{R}^2))$.  We then repeat the argument $N$ times, for $Nr\geq 1$.  On the last iteration, $g_0$ and $g_1$ become $C^{1,Nr-1}$; however, the regularity of $g_2$ becomes the limiting factor, since $g_2 \in L^\infty([0,T];\Bdot_{\infty,\infty}^1(\mathbb{R}^2))$.  We cannot bootstrap any higher, and thus we have shown that $\theta\in L^\infty([0,T];\Bdot_{\infty,\infty}^1(\mathbb{R}^2))$.  

	We now show that for any $z$, $\nabla \Psi_1(\cdot, z)$ enjoys the same regularity in $x$ as $\dnu_1$.  Recalling that the $L^1(\mathbb{R}^2)$ norm of the Poisson kernel $\mathcal{P}_z(x):=\mathcal{P}(x,z)$ is equal to 1 for any $z$, we can say that for all $j$,
$$ ||\Delta_j \left(\mathcal{P}_z \ast (\dnu_1) \right) ||_{L^\infty(\mathbb{R}^2)} \leq || \Delta_j \left(\dnu_1\right)||_{L^\infty(\mathbb{R}^2)} $$
(where the Littlewood-Paley projection is in $x$ only).
This shows that $\left(\mathcal{P}_z \ast (\dnu_1)\right)  \in \Bdot_{\infty,\infty}^1(\mathbb{R}^2)$ with norm less than or equal to that of $\dnu_1$.  Furthermore, this estimate is uniform in $z$. Next, we note that 
$$ \nabla\Psi_1(z,x) = \big{(}\mathcal{P}_z\ast(\dnu_1)(x), \mathcal{R}_1(\mathcal{P}_z\ast(\dnu_1))(x), \mathcal{R}_2(\mathcal{P}_z\ast(\dnu_1))(x) \big{)} $$
where $\mathcal{R}_i$ is the $i^{th}$ Riesz transform.  Using the boundedness of the Riesz transforms on Besov spaces (part (1) of \cref{corollaries}) and the above observations regarding the Poisson kernel, we have that $\nabla\Psi_1 \in L^\infty([0,T]\times[0,\infty);\Bdot_{\infty,\infty}^{1}(\mathbb{R}^2))$.  Recalling (4) of \cref{apriori}, which gives that $\nabla\Psi_2 \in L^\infty([0,T]\times[0,\infty);\Bdot_{\infty,\infty}^{1}(\mathbb{R}^2))$, we have shown the following:
\begin{theorem}\label{velocity}
Let $\Psi$ be a strong solution to $(\QG)$ on $[0,T]$; then there exists $C$ depending only on $||\Psi_0||_{H^3(\mathbb{R}_+^3)}$ such that $\Psi$ satisfies 
$$\nabla \Psi \in L^\infty([0,T]\times[0,\infty);\Bdot_{\infty,\infty}^{1}(\mathbb{R}^2))$$
with norm less than or equal to $C$.
\end{theorem}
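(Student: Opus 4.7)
The statement follows by combining the bootstrap carried out earlier in this section for $\dnu_1$ with a Poisson-kernel lift from the trace to the half space and the a priori control of $\nabla \Psi_2$ from part (4) of Proposition \ref{apriori}. First I would invoke the bootstrap: starting from $\dnu_1 \in L^\infty([0,T]; C^r(\mathbb{R}^2))$ given by Lemma \ref{holder}, iteratively improve the spatial regularity by alternating applications of Lemma \ref{nonlinear} (for the nonlinear drift term $-u \cdot \grad \theta$) and Lemma \ref{rhs} (for the forcing $\lap \Psi_2|_{z=0}$) on the Duhamel representation of $\theta = \dnu_1$. After finitely many iterations the regularity saturates at $\Bdot_{\infty,\infty}^1(\mathbb{R}^2)$, which is the ceiling dictated by the $L^\infty$-in-time Zygmund regularity of the forcing coming from part (4) of Proposition \ref{apriori}.

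To apply Lemma \ref{nonlinear} at a chosen base point $(t_0, x_0)$, one must arrange that both the scalar factor $\tilde\theta(t_0, 0)$ and the drift factor $\tilde u(t, 0)$ vanish; this is accomplished by the characteristic change of variables $\tilde\theta(t, x) = \theta(t, x + \Gamma(t)) - \theta(t_0, x_0)$ and $\tilde u(t, x) = u(t, x + \Gamma(t)) - u(t, \Gamma(t))$ with $\dot\Gamma(t) = u(t, \Gamma(t))$, $\Gamma(t_0) = x_0$. The trajectory $\Gamma$ is well-defined and Lipschitz on $[0,T]$ because $(\QG)$ has a smooth solution on this interval, and Lipschitz control of $\Gamma$ is exactly what is needed to preserve all Hölder and Besov norms. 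Splitting $\tilde\theta = g_0 + g_1 + g_2$ into initial-data, nonlinear, and forcing contributions and applying the two lemmas yields the bootstrap step at $(t_0, 0)$; the base-point being arbitrary and the argument translation-invariant in $x$, this gives the claimed uniform $\Bdot_{\infty,\infty}^1$ bound on $\dnu_1$.

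Next I would lift this bound from the boundary to $\nabla \Psi_1$ for all $z \geq 0$. Since $\Delta \Psi_1 = 0$ with Neumann datum $\dnu_1$, the Poisson representation gives $-\partial_z \Psi_1(z, \cdot) = \mathcal{P}_z \ast \dnu_1$ while the tangential components satisfy $\partial_{x_i} \Psi_1(z, \cdot) = \mathcal{R}_i(\mathcal{P}_z \ast \dnu_1)$ via the Riesz transforms $\mathcal{R}_i$. Because $\|\mathcal{P}_z\|_{L^1(\mathbb{R}^2)} = 1$ uniformly in $z$ and convolution commutes with each Littlewood--Paley projection, Young's inequality produces a $z$-uniform $\Bdot_{\infty,\infty}^1(\mathbb{R}^2)$ estimate on $\mathcal{P}_z \ast \dnu_1$; the Riesz transforms preserve this space by part (1) of Proposition \ref{corollaries}. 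Combining with the $\Bdot_{\infty,\infty}^1$ bound on $\nabla \Psi_2$ from part (4) of Proposition \ref{apriori} then completes the proof.

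The main obstacle is the bootstrap itself: a direct multiplicative estimate on $u \cdot \grad \theta$ only gives $C^{\alpha_1 \wedge \alpha_2}$ regularity, which would halt the improvement after the first step. The characteristic change of variables is what converts the minimum into a sum in Lemma \ref{nonlinear}, and it is essential that Lemma \ref{nonlinear} accommodates both a first-order difference (for $\alpha < 1$) and the symmetric second-order difference characterization of $\Bdot_{\infty,\infty}^1$ from Proposition \ref{equivalences}(1), since at the top of the bootstrap the target exponent crosses the integer value $1$ where the first-order characterization breaks down.
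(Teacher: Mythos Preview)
Your proposal is correct and follows essentially the same approach as the paper: the bootstrap via the characteristic change of variables, the Duhamel splitting $\tilde\theta = g_0 + g_1 + g_2$ with Lemma~\ref{nonlinear} handling $g_1$ and Lemma~\ref{rhs} handling $g_2$, the saturation at $\Bdot_{\infty,\infty}^1$ forced by $g_2$, the Poisson-kernel lift (with Riesz transforms for the tangential components), and the final combination with part~(4) of Proposition~\ref{apriori} for $\nabla\Psi_2$. One small sharpening: Lemma~\ref{holder} actually gives $\dnu \in C^r([0,T]\times\mathbb{R}^2)$, i.e.\ H\"older in \emph{space-time}, not merely $L^\infty([0,T];C^r(\mathbb{R}^2))$, and this joint regularity is precisely what Lemma~\ref{nonlinear} requires of the factor $f$.
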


\section{Propagation of Regularity}

We begin by using the transport equations on both $\nabla\Psi$ and $\Delta\Psi$ to show that smoothness in the flat variable $x=(x_1,x_2)$ is propagated in time.  Then, using this result in conjunction with the stratification of the flow will show that smoothness in all variables is propagated in time.  Since the local existence theorem gives existence of strong solutions on a time interval which depends only on $|| \nabla\Psi_0 ||_{H^3(\mathbb{R}_+^3)}$, obtaining a differential inequality which bounds $|| \nabla\Psi(t)||_{H^3(\mathbb{R}_+^3)}$ in time allows us to apply a continuation principle, thus showing that solutions are smooth for all time. We work again on a time interval for which $\nabla\Psi$ is a solution to (QG), justifying the calculations.  

\begin{lemma}\label{flat}
For any $T>0$, $R>0$, there exists a constant $C_{T,R}$ such that the following is true. Let $\nabla\Psi\in L^\infty([0,t_0];H^3(\mathbb{R}_+^3))$ be a solution to $(\QG)$ for all $t_0 < T$.  If $||\nabla\Psi_0||_{H^{s+1}(\mathbb{R}_+^3)}<R$, then for all $t<T$, $$ ||\grad^{s+1}(\nabla\Psi)(t)||_{L^2(\mathbb{R}_+^3)} + ||\grad^s(\Delta\Psi)(t)||_{L^2(\mathbb{R}_+^3)} \leq C_{T,R}.$$ 
\end{lemma}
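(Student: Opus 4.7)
The strategy is a Beale-Kato-Majda style argument. Set
\[
Y(t) := \|\grad^{s+1}\nabla\Psi(t)\|_{L^2(\mathbb{R}_+^3)}^2 + \|\grad^s \Delta\Psi(t)\|_{L^2(\mathbb{R}_+^3)}^2.
\]
I perform energy estimates simultaneously on $(QG_\nabla)$ (differentiated $s+1$ times in the flat variables) and on the transport equation for $\Delta\Psi$ (differentiated $s$ times), exploiting that $\Dbar^\alpha$ commutes with every constant-coefficient differential operator. Combined with the Besov regularity of \cref{velocity} and the logarithmic interpolation of \cref{inequality}, the plan is to derive a differential inequality of the form $Y'(t) \le C\bigl(1+\log(e+Y(t))\bigr)Y(t)$, whereupon a log-Gr\"onwall argument gives the claim.

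For $(QG_\nabla)$, apply a flat derivative $\Dbar^\alpha$ with $|\alpha|=s+1$ and test against $\Dbar^\alpha\nabla\Psi$. Since $\Dbar^\alpha$ commutes with $\mathbb{P}_\nabla$ (noted before \cref{Pgradient}) and $\Dbar^\alpha\nabla\Psi = \nabla(\Dbar^\alpha\Psi)$ is itself a gradient vector field, the projection drops out of the inner product. Because $\nabla^\perp\Psi$ has zero $z$-component, the advection acts only in the flat directions: writing
\[
\Dbar^\alpha\bigl(\nabla^\perp\Psi\cdot\grad\nabla\Psi\bigr) = \nabla^\perp\Psi\cdot\grad(\Dbar^\alpha\nabla\Psi) + \bigl[\Dbar^\alpha,\nabla^\perp\Psi\bigr]\cdot\grad\nabla\Psi,
\]
the principal term integrates to zero via the 2D divergence-freeness of $\nabla^\perp\Psi$, while \cref{commutator} applied slice-wise in $z$ bounds the commutator essentially by $C\|\grad\nabla^\perp\Psi\|_{L^\infty}\|\grad^{s+1}\nabla\Psi\|_{L^2}$ plus lower-order terms. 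The forcing $\int \Dbar^\alpha\nabla F \cdot \Dbar^\alpha\nabla\Psi$ is handled by integrating by parts in $\mathbb{R}_+^3$: since $\Delta(\Dbar^\alpha F)=0$, only the boundary integral survives and equals $-\|\grad(\Dbar^\alpha\Psi)|_{z=0}\|_{L^2(\mathbb{R}^2)}^2 \le 0$, which can be discarded. The analogous procedure applied to the transport equation for $\Delta\Psi$ differentiated $s$ times controls $\tfrac{d}{dt}\|\grad^s\Delta\Psi\|_{L^2}^2$ by the same type of commutator.

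To close the argument, I apply \cref{inequality} at each slice $\{z=z_0\}$ with $H = \nabla\Psi(z_0,\cdot)$. The $\Bdot^0_{\infty,\infty}(\mathbb{R}^2)$ norm of $\grad\nabla\Psi(z_0,\cdot)$ is bounded uniformly in $z_0$ and $t\in[0,T]$ by \cref{velocity}, the $L^\infty$ norm of $\nabla\Psi(z_0,\cdot)$ follows from the a priori estimates in \cref{apriori}, and the $\Hdot^{3/2}(\mathbb{R}^2)$ norm on a slice is controlled by $Y(t)^{1/2}$ via the trace theorem combined with interpolation (the assumption $s\ge 3$ supplying the necessary flat regularity). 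Taking the supremum over $z_0$ yields $\|\grad\nabla\Psi\|_{L^\infty(\mathbb{R}_+^3)} + \|\grad\nabla^\perp\Psi\|_{L^\infty(\mathbb{R}_+^3)} \le C\bigl(1+\log(e+Y(t))\bigr)$, which inserted into the two energy estimates produces the targeted log--Gr\"onwall inequality on $Y$.

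The main obstacle is the commutator estimate for the $\Delta\Psi$ equation: \cref{commutator} naively produces an unwanted $\|\grad\Delta\Psi\|_{L^\infty}$ factor, which must be eliminated by a Kato-Ponce style redistribution of derivatives together with interpolation against the conserved $L^\infty$ bound on $\Delta\Psi$ from \cref{apriori}(2). A secondary subtlety is that \cref{velocity} supplies Besov regularity only in the flat variable at each fixed $z$; promoting this to a uniform-in-$z$ $L^\infty$ bound on second derivatives of $\Psi$ (in particular the mixed term $\partial_z\grad\Psi$) uses the decomposition $\Psi=\Psi_1+\Psi_2$, with vertical derivatives of $\Psi_1$ reduced to flat ones by Riesz-transform boundedness on Besov spaces (\cref{corollaries}(1)) and those of $\Psi_2$ handled by the a priori Besov bound in \cref{apriori}(4).
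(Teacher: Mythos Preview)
Your plan is correct and is essentially the same as the paper's: two simultaneous energy estimates on $\Dbar^\alpha\nabla\Psi$ (with $|\alpha|=s+1$) and $\Dbar^\alpha\Delta\Psi$ (with $|\alpha|=s$), the forcing handled by the same integration by parts producing a negative boundary term, and closure via \cref{inequality} together with \cref{velocity} to obtain a log--Gr\"onwall inequality on $Y$.

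Two places where you make your life harder than necessary. First, the ``main obstacle'' is self-inflicted. The paper writes the advection in divergence form, $\grad^\perp\Psi\cdot\grad(\Delta\Psi)=\grad\cdot(\grad^\perp\Psi\,\Delta\Psi)$, and applies \cref{commutator} with $g=\Delta\Psi$ and the combined operator $\Dbar^\alpha\grad$ of total order $s+1$. The resulting commutator bound is
\[
C\bigl(\|\grad(\grad^\perp\Psi)\|_{L^\infty}\|\grad^{s}(\Delta\Psi)\|_{L^2}+\|\Delta\Psi\|_{L^\infty}\|\grad^{s+1}(\grad^\perp\Psi)\|_{L^2}\bigr),
\]
and the second term is directly controlled by \cref{apriori}(2) times $Y(t)^{1/2}$; no Kato--Ponce redistribution or interpolation is needed. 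Second, the ``secondary subtlety'' is already absorbed in \cref{velocity}: its conclusion is $\nabla\Psi\in L^\infty([0,T]\times[0,\infty);\Bdot^1_{\infty,\infty}(\mathbb{R}^2))$, which includes the component $\partial_z\Psi$, so $\grad(\nabla\Psi)\in L^\infty_{t,z}\Bdot^0_{\infty,\infty}(\mathbb{R}^2)$ directly. The decomposition $\Psi=\Psi_1+\Psi_2$ you sketch is exactly how \cref{velocity} was proved, so there is nothing further to do here.
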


\begin{proof}
Recall that for $s=|\alpha|$, \cref{commutator} gives the commutator estimate 
$$ \left|\left|D^\alpha (fg)-f D^\alpha g \right|\right|_{L^2} \leq C(s) \left(||\nabla f||_{L^\infty}||\nabla^{(s-1)}g||_{L^2} + ||g||_{L^\infty}||\nabla^s f||_{L^2}\right) .$$
Also recall that for $h=\grad H$, \cref{inequality} provides the bound 
$$ ||h||_{L^\infty} \leq C||H||_{L^\infty} + {C}||h||_{\Bdot_{\infty,\infty}^0}\left( 1+\log{\frac{||h||_{\Hdot^\frac{3}{2}}}{||h||_{\Bdot_{\infty,\infty}^0}}} \right) .$$
Using the fact that $\partial_{zz} \Psi = \Delta\Psi - \lap\Psi$ and applying \cref{trace} with $u=\grad^2(\nabla\Psi)$ gives that  
\begin{equation}\label{eq:trace}
\sup_z ||\nabla\Psi(z,\cdot)||_{\Hdot^\frac{5}{2}} = \sup_z ||\grad^2(\nabla\Psi)(z,\cdot)||_{\Hdot^\frac{1}{2}(\mathbb{R}^2)} \leq ||\grad^3 (\nabla\Psi)||_{L^2(\mathbb{R}_+^3)} + ||\grad^2 (\Delta\Psi) ||_{L^2(\mathbb{R}_+^3)} . 
\end{equation}
From \cref{velocity}, $\nabla\Psi \in L^\infty([0,t_0]\times[0,\infty);\Bdot_{\infty,\infty}^{1}(\mathbb{R}^2))$. We have that $\nabla\Psi\in L^\infty(\mathbb{R}_+^3) $.  Then applying \cref{inequality} to $h=\grad(\nabla\Psi)$, \cref{apriori}, \cref{velocity}, and \eqref{eq:trace}, we obtain the following:
\begin{align}
||\grad(\nabla&\Psi)||_{L^\infty(\mathbb{R}_+^3)} = \sup_z ||\grad(\nabla\Psi)(z,\cdot)||_{L^\infty(\mathbb{R}^2)}\nonumber \\ 
&\leq C \sup_z \left( ||\nabla \Psi(z,\cdot)||_{L^\infty} + ||\grad(\nabla \Psi)(z,\cdot)||_{\Bdot_{\infty,\infty}^0}\left( 1+\log\frac{||\grad(\nabla \Psi)(z,\cdot)||_{\Hdot^\frac{3}{2}}}{||\grad(\nabla\Psi)(z,\cdot)||_{\Bdot_{\infty,\infty}^0}} \right)\right)\nonumber \\ 
&\leq C \sup_z \left( 1 + ||\grad(\nabla \Psi)(z,\cdot)||_{\Bdot_{\infty,\infty}^0} \left( \log||\nabla\Psi(z,\cdot)||_{\Hdot^\frac{5}{2}} -\log{||\grad(\nabla\Psi)(z,\cdot)||_{\Bdot_{\infty,\infty}^0}} \right)\right) \nonumber\\ 
&\leq C \sup_z \left( 1 + \logplus||\nabla\Psi(z,\cdot)||_{\Hdot^\frac{5}{2}} \right)\nonumber\\ 
&\leq C \left( 1+\logplus{\left(||\grad^3 (\nabla\Psi)||_{L^2(\mathbb{R}_+^3)} + ||\grad^2 (\Delta\Psi) ||_{L^2(\mathbb{R}_+^3)}\right)} \right) . \label{log}
\end{align}

We shall obtain a differential inequality from the transport equations on $\nabla\Psi$ and $\Delta\Psi$.  Beginning with the former, we have from \cref{Pgradient} that
$$ \partial_t(\nabla \Psi)+ \mathbb{P}_\nabla(\grad^\perp \Psi \cdot \grad(\nabla \Psi)) = \nabla F .$$ 
We shall apply the commutator bound by putting $f=\grad^\perp \Psi$, $g=\grad(\nabla\Psi)$, and applying a differential operator $\Dbar^\alpha$ with $|\alpha|=s+1$. Using \eqref{log} and the fact that $|s|\geq 2$ , we have
\begin{align*}
\big{|}\big{|}\big{[}\grad^\perp \Psi, \Dbar^\alpha\big{]}(&\grad (\nabla \Psi)(z,\cdot))\big{|}\big{|}_{L^2(\mathbb{R}^2)}\leq
C \bigg{(}||\grad(\grad^\perp\Psi)(z,\cdot)||_{L^\infty} ||\grad^{s}(\grad(\nabla\Psi))(z,\cdot)||_{L^2}\nonumber\\
&\qquad\qquad\qquad\qquad\qquad+ ||\grad(\nabla\Psi)(z,\cdot)||_{L^\infty} ||\grad^{s+1}(\grad^\perp\Psi)(z,\cdot)||_{L^2} \bigg{)} \nonumber\\
&\leq C ||\grad^{s+1}(\nabla\Psi)(z,\cdot)||_{L^2}||\grad(\nabla\Psi)(z,\cdot)||_{L^\infty} \nonumber\\
&\leq C ||\grad^{s+1}(\nabla\Psi)(z,\cdot)||_{L^2}  \left( 1+\logplus{\left(||\grad^3 (\nabla\Psi)||_{L^2(\mathbb{R}_+^3)} + ||\grad^2 (\Delta\Psi) ||_{L^2(\mathbb{R}_+^3)}\right)} \right) . \nonumber\\
\end{align*}
Applying the differential operator $\Dbar^\alpha$ with $|\alpha|=s+1\geq 3$, multiplying by $\Dbar^\alpha \nabla \Psi$, integrating by parts, and utilizing the commutator estimate gives
\begin{align*}
\frac{1}{2}\frac{\partial}{\partial t} \int_{\mathbb{R}_+^3} | &\Dbar^\alpha \nabla \Psi|^2 = \int_{\mathbb{R}_+^3} \mathbb{P}_\nabla \left[ [\grad^\perp \Psi, \Dbar^\alpha](\grad (\nabla \Psi)) \right]\cdot \nabla \Dbar^\alpha \Psi + \int_{\mathbb{R}_+^3}{\nabla \Dbar^\alpha F \cdot \nabla \Dbar^\alpha \Psi }\\
&= \int_{\mathbb{R}_+^3} \left[\grad^\perp \Psi, \Dbar^\alpha\right](\grad (\nabla \Psi)) \cdot \nabla \Dbar^\alpha \Psi  + \int_{\mathbb{R}^2} (\Dbar^\alpha (\partial_\nu F))(\Dbar^\alpha \Psi)  \\
&= \int_{\mathbb{R}_+^3} \left[\grad^\perp \Psi, \Dbar^\alpha\right](\grad(\nabla \Psi)) \cdot \nabla \Dbar^\alpha \Psi + \int_{\mathbb{R}^2} (\Dbar^\alpha (\lap\Psi))(\Dbar^\alpha \Psi) \\
&\leq \int_0^\infty \int_{\mathbb{R}^2} \left[\grad^\perp \Psi(z,\cdot), \Dbar^\alpha\right](\grad(\nabla \Psi)(z,\cdot)) \cdot \nabla \Dbar^\alpha \Psi(z,\cdot)\,dx\,dz \\
&\leq  \int_0^\infty \left|\left|\left[\grad^\perp \Psi(z,\cdot), \Dbar^\alpha\right] (\grad(\nabla \Psi)(z,\cdot)) \right|\right|_{L^2(\mathbb{R}^2)} \left|\left| \Dbar^\alpha \nabla \Psi(z,\cdot) \right|\right|_{L^2(\mathbb{R}^2)} \,dz \\
&\leq C \int_0^\infty \left|\left|\grad^{s+1}(\nabla\Psi)(z,\cdot)\right|\right|_{L^2(\mathbb{R}^2)}^2  \left( 1+\logplus\left({||\grad^3 (\nabla\Psi)||_{L^2(\mathbb{R}_+^3)} + ||\grad^2 (\Delta\Psi) ||_{L^2(\mathbb{R}_+^3)}}\right) \right) \,dz \\
&\leq C \left|\left|\grad^{s+1}(\nabla\Psi)\right|\right|_{L^2(\mathbb{R}_+^3)}^2  \left( 1+\logplus{\left(||\grad^3 (\nabla\Psi)||_{L^2(\mathbb{R}_+^3)} + ||\grad^2 (\Delta\Psi) ||_{L^2(\mathbb{R}_+^3)}\right)} \right).
\end{align*}

We now move to the transport equation on $\Delta\Psi$:
$$ \partial_t(\Delta\Psi)+\grad^\perp\Psi\cdot\grad(\Delta\Psi)=0 .$$
We shall apply the commutator bound by putting $f=\grad^\perp \Psi$, $g=\Delta\Psi$, and applying a differential operator $\Dbar^\alpha$ with $|\alpha|=s$. Using the $L^\infty$ bound on $\Delta\Psi$, \eqref{log}, and the fact that $|s|\geq 2$ , we have
\begin{align*}
\big{|}\big{|}\big{[}\grad^\perp \Psi, \Dbar^\alpha\grad\cdot\big{]}(&\Delta \Psi)(z,\cdot)\big{|}\big{|}_{L^2(\mathbb{R}^2)}\leq C \bigg{(}||\grad(\grad^\perp\Psi)(z,\cdot)||_{L^\infty} ||\grad^{s}(\Delta\Psi)(z,\cdot)||_{L^2}\nonumber\\
&\qquad\qquad\qquad\qquad+ ||\Delta\Psi(z,\cdot)||_{L^\infty} ||\grad^{s+1}(\grad^\perp\Psi)(z,\cdot)||_{L^2} \bigg{)} \nonumber\\
&\leq C \left(||\grad^{s}(\Delta\Psi)(z,\cdot)||_{L^2}||\grad(\nabla\Psi)(z,\cdot)||_{L^\infty} + ||\grad^{s+1}(\grad^\perp\Psi)(z,\cdot)||_{L^2}  \right)\nonumber\\
&\leq C \left(||\grad^{s}(\Delta\Psi)(z,\cdot)||_{L^2} +||\grad^{s+1}(\nabla\Psi)(z,\cdot)||_{L^2} \right)\\
&\qquad\times\left( 1+\logplus{\left(||\grad^3 (\nabla\Psi)||_{L^2(\mathbb{R}_+^3)} + ||\grad^2 (\Delta\Psi) ||_{L^2(\mathbb{R}_+^3)}\right)} \right) . \nonumber\\
\end{align*}

Applying the differential operator $\Dbar^\alpha$ with $|\alpha|=s\geq 2$, multiplying by $\Dbar^\alpha \Delta \Psi$, integrating by parts, and utilizing the commutator estimate gives
\begin{align*}
\frac{1}{2}\frac{\partial}{\partial t} \int_{\mathbb{R}_+^3} |\Dbar^\alpha &\Delta \Psi|^2 = \int_{\mathbb{R}_+^3} \left[\grad^\perp \Psi, \Dbar^\alpha\grad\cdot\right] (\Delta \Psi) \cdot \Dbar^\alpha \Delta \Psi \\
&= \int_0^\infty \int_{\mathbb{R}^2} \left[\grad^\perp \Psi(z,x), \Dbar^\alpha\grad\cdot\right](\Delta \Psi)(z,x) \cdot \Dbar^\alpha \Delta\Psi(z,x)\,dx\,dz \\
&\leq  \int_0^\infty \left|\left|\left[\grad^\perp \Psi(z,\cdot), \Dbar^\alpha\grad\cdot\right] (\Delta \Psi)(z,\cdot) \right|\right|_{L^2(\mathbb{R}^2)} \left|\left| \Dbar^\alpha \Delta \Psi(z,\cdot) \right|\right|_{L^2(\mathbb{R}^2)} \,dz \\
&\leq C \int_0^\infty \left( ||\grad^s(\Delta\Psi)(z,\cdot)||_{L^2(\mathbb{R}^2)}^2 + ||\grad^{s+1}(\nabla\Psi)(z,\cdot)||_{L^2(\mathbb{R}^2)}||\grad^s(\Delta\Psi)(z,\cdot)||_{L^2(\mathbb{R}^2)} \right)\\
&\qquad\qquad\times\left( 1+\logplus{||\grad^3 (\nabla\Psi)||_{L^2(\mathbb{R}_+^3)} + ||\grad^2 (\Delta\Psi) ||_{L^2(\mathbb{R}_+^3)}} \right) \,dz \\
&\leq C \left( \left|\left|\grad^s(\Delta\Psi)\right|\right|_{L^2(\mathbb{R}_+^3)}^2 + ||\grad^{s+1}(\nabla\Psi)||_{L^2(\mathbb{R}_+^3)} ||\grad^s(\Delta\Psi)||_{L^2(\mathbb{R}_+^3)} \right)\\  
&\qquad\times\left( 1+\logplus{\left(||\grad^3 (\Delta\Psi)||_{L^2(\mathbb{R}_+^3)} + ||\grad^2 (\Delta\Psi) ||_{L^2(\mathbb{R}_+^3)}\right)} \right) .
\end{align*}
Therefore, we can sum over $\alpha$ in both inequalities and apply Gronwall's inequality to the sum 
$$ ||\grad^{s+1} (\Delta\Psi)||_{L^2(\mathbb{R}_+^3)} + ||\grad^s (\Delta\Psi) ||_{L^2(\mathbb{R}_+^3)} ,$$
finishing the proof.
\end{proof}

We now show that regularity in $z$ can be propagated as well.

\begin{theorem}\label{full}
For any $T>0$, $R>0$, there exists a constant $C_{T,R}$ such that the following is true. Let $\nabla\Psi\in L^\infty([0,t_0];H^3(\mathbb{R}_+^3))$ be a solution to $(\QG)$ for all $t_0 < T$.  If $||\nabla\Psi_0||_{H^s(\mathbb{R}_+^3)}<R$, then for all $t < T$, $$||\nabla\Psi(t)||_{H^{s}(\mathbb{R}_+^3)} \leq C_{T,R}.$$ 
\end{theorem}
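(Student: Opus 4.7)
The strategy is to reduce the full three-dimensional Sobolev bound to the purely horizontal bounds already established in \cref{flat}, by exploiting the algebraic identity $\partial_{zz}\Psi=\Delta\Psi-\lap\Psi$ to trade each pair of vertical derivatives either for a Laplacian (whose horizontal regularity is controlled via the transport equation on $\Delta\Psi$) or for two additional horizontal derivatives (controlled directly by \cref{flat}).

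I would argue by induction on the number $j$ of vertical derivatives, establishing the paired claim that for every multi-index $(j,\beta)$,
\begin{equation*}
\|\partial_z^j\Dbar^\beta \nabla\Psi(t)\|_{L^2(\mathbb{R}_+^3)}\le C_{T,R}\quad (j+|\beta|\le s),\qquad \|\partial_z^j\Dbar^\beta \Delta\Psi(t)\|_{L^2(\mathbb{R}_+^3)}\le C_{T,R}\quad (j+|\beta|\le s-1).
\end{equation*}
The base case $j=0$ is exactly \cref{flat}. At level $j\ge 2$ for the first quantity, one writes
\begin{equation*}
\partial_z^j\Dbar^\beta(\nabla\Psi)=\partial_z^{j-2}\Dbar^\beta \nabla(\Delta\Psi)-\partial_z^{j-2}\Dbar^{\beta+2}(\nabla\Psi),
\end{equation*}
so both summands involve strictly fewer vertical derivatives, provided that mixed derivatives of $\Delta\Psi$ are controlled in parallel.

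To propagate vertical regularity of $\Delta\Psi$, I would differentiate the transport equation $\partial_t(\Delta\Psi)+\grad^\perp\Psi\cdot \grad(\Delta\Psi)=0$ by $\partial_z^j\Dbar^\beta$. Because the drift is stratified, commuting $\partial_z^k$ through $\grad^\perp\Psi\cdot\grad$ only produces horizontal gradients of lower-order $\partial_z^{k'}\Psi$, whose $L^\infty$ norms are bounded by combining \cref{velocity} with the same identity $\partial_{zz}=\Delta-\lap$ and the logarithmic Besov interpolation \cref{inequality}. Pairing with $\partial_z^j\Dbar^\beta\Delta\Psi$, integrating by parts (using $\dive\grad^\perp\Psi=0$ and the homogeneous Neumann structure), and invoking \cref{commutator} yields a Gronwall-type differential inequality at each level $j$, whose forcing only involves quantities at strictly lower vertical orders, which are already controlled by inductive hypothesis. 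Iterating in $j$ closes the coupled system up to level $s$.

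The main obstacle is the nested bookkeeping required to keep the two towers of estimates coherent at each level: at every step one must invoke \cref{inequality} and \cref{velocity} to convert $L^\infty$ bounds on $\grad(\nabla\Psi)$ (and its vertical derivatives, traded via $\partial_{zz}=\Delta-\lap$) into a logarithmic dependence on the total energy $\mathcal{E}_s(t):=\|\nabla\Psi(t)\|_{H^s(\mathbb{R}_+^3)}^2$, ultimately producing an inequality of the form $\frac{d}{dt}\mathcal{E}_s\le C\,\mathcal{E}_s\bigl(1+\logplus \mathcal{E}_s\bigr)$. A standard Gronwall argument then yields the finite-in-time bound $\mathcal{E}_s(t)\le C_{T,R}$ on any $[0,T]$, which combined with \cref{localexistence} and the continuation principle sketched after it upgrades the local solution to a global one; \cref{main} then follows by iterating the same scheme for arbitrary $s$.
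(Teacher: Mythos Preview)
Your overall strategy---reduce full Sobolev control to horizontal control via $\partial_{zz}=\Delta-\lap$, then propagate $H^{s-1}$ regularity of $\Delta\Psi$ through its transport equation---matches the paper's. But the paper's execution is considerably simpler, and your inductive scheme both overcomplicates things and contains an under-justified step.

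The simplification you miss: once \cref{flat} is established, the inequality \eqref{log} derived inside its proof shows that $\|\grad(\nabla\Psi)\|_{L^\infty(\mathbb{R}_+^3)}$ is bounded by a \emph{fixed constant} $C_{T,R}$ (the logarithm now sits on known quantities). This single $L^\infty$ bound is all that the Kato--Ponce estimate \cref{commutator} requires when applied \emph{directly in three dimensions} to the transport equation on $\Delta\Psi$: apply a full 3D operator $D^\alpha$ with $|\alpha|=s-1$, pair with $D^\alpha\Delta\Psi$, and the commutator yields
\[
\tfrac{d}{dt}\|\nabla^{s-1}\Delta\Psi\|_{L^2}^2 \le C\bigl(\|\nabla^{s-1}\Delta\Psi\|_{L^2}^2 + \|\nabla^{s-1}\Delta\Psi\|_{L^2}\bigr),
\]
a linear Gronwall inequality with no logarithmic factor and no induction. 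The identity $\partial_{zz}=\Delta-\lap$ is used only once, to bound $\|\nabla^s(\nabla\Psi)\|_{L^2}$ (which appears in the commutator) by $\|\nabla^{s-1}\Delta\Psi\|_{L^2}$ plus the already-controlled $\|\grad^s(\nabla\Psi)\|_{L^2}$.

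Your inductive approach would in principle work, but the claim that ``$L^\infty$ norms of horizontal gradients of $\partial_z^{k'}\Psi$ are bounded by combining \cref{velocity} with $\partial_{zz}=\Delta-\lap$ and \cref{inequality}'' is not justified for $k'\ge 2$: \cref{velocity} gives $\nabla\Psi\in\Bdot^1_{\infty,\infty}(\mathbb{R}^2)$, not $\partial_z^{k'}\Psi$, and trading $\partial_{zz}$ for $\Delta-\lap$ would require $\Bdot^1_{\infty,\infty}$ control on $\Delta\Psi$, which you do not have. Fortunately this gap is irrelevant once you apply \cref{commutator} in 3D as above, since then only $\|\nabla(\grad^\perp\Psi)\|_{L^\infty}=\|\grad(\nabla\Psi)\|_{L^\infty}$ is needed---and that collapses your induction to a single step.
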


\begin{proof}
From \cref{flat}, Sobolev embedding, and the trace estimate, $||\grad(\nabla\Psi)(t)||_{L^\infty(\mathbb{R}_+^3)}$ is bounded.  Also, observe that using the identity $\partial_{zz} = \Delta - \lap$, we have that 
$$ ||\nabla^{s}(\nabla\Psi) ||_{L^2} \leq C \left(||\nabla^{s-1}(\Delta \Psi)||_{L^2} + ||\grad^{s}(\nabla\Psi)||_{L^2}\right) .$$
By \cref{flat}, we have that $||\grad^{s}(\nabla\Psi)||_{L^2}<\infty$.  Thus the theorem will be shown if $\Delta\Psi\in H^{s-1}$ for all time.  Applying a differential operator $D^\alpha$ with $|\alpha|=s-1\geq 2$, multiplying by $D^\alpha \Delta \Psi$, integrating by parts, and using the commutator estimate (in $\mathbb{R}_+^3$) in conjunction with the above observations, we have
\begin{align*}
\frac{1}{2}\frac{\partial}{\partial t} \int_{\mathbb{R}_+^3} |D^\alpha \Delta \Psi|^2 &= \int_{\mathbb{R}_+^3} \left[\grad^\perp \Psi, D^\alpha \grad \cdot\right] (\Delta \Psi) \cdot D^\alpha \Delta \Psi \\
&\leq C \left( ||\grad(\nabla\Psi)||_{L^\infty}||\nabla^{s-1}(\Delta\Psi)||_{L^2} + ||\Delta\Psi||_{L^\infty}||\nabla^{s}(\nabla\Psi)||_{L^2} \right) ||\nabla^{s-1} (\Delta\Psi) ||_{L^2 }\\
&\leq C (||\nabla^{s-1} (\Delta\Psi) ||_{L^2}^2 + ||\nabla^{s-1} (\Delta\Psi) ||_{L^2 })
\end{align*}
Summing over $\alpha$ and applying Gronwall's inequality now finishes the proof.  
\end{proof}

\begin{proof}[Proof of \cref{main}]
Applying a continuation principle in conjunction with \cref{full} gives the first part of \cref{main}; namely, if $\nabla\Psi_0\in H^s(\mathbb{R}_+^3)$ for some $s\geq3$, then for all $T>0$, there exists $C(T,s)$ such that for all $t\leq T$, $||\nabla\Psi(t, \cdot)||_{H^s(\mathbb{R}_+^3)} \leq C(T,s)$. To finish the proof, it remains to show uniqueness and regularity in time.  Uniqueness follows from the usual energy method.  Indeed, let $\Psi_1$, $\Psi_2$ be two solutions with the same initial data $\nabla\Psi_0 \in H^s(\mathbb{R}_+^3)$ for some $s\geq3$. We will use the formulation of \cref{Pgradient} with $\tilde{\Psi}=\Psi_1-\Psi_2$, $\tilde{F}=F_1-F_2$.   Considering the difference of the two equations, we have 
$$ \partial_t(\nabla\tilde{\Psi}) + \grad^\perp \tilde{\Psi} \cdot \grad (\nabla\Psi_1) + \grad^\perp \Psi_2 \cdot \grad (\nabla\tilde{\Psi}) = \nabla\tilde{F} . $$
Multiplying by $\nabla\tilde{\Psi}$, using the regularity of $\nabla\Psi_1$, and integrating by parts, we have
\begin{align*}
\frac{1}{2}\frac{\partial}{\partial t}||\nabla\tilde{\Psi}||_{L^2}^2 &= \int_{\mathbb{R}_+^3} \left(\grad^\perp \tilde{\Psi} \cdot \grad (\nabla\Psi_1) + \grad^\perp \Psi_2 \cdot \grad (\nabla\tilde{\Psi})\right)\cdot\nabla\tilde{\Psi}  + \int_{\mathbb{R}_+^3} \nabla\tilde{F} \cdot \nabla\tilde{\Psi}\\
&= \int_{\mathbb{R}_+^3} \left(\grad^\perp \tilde{\Psi} \cdot \grad (\nabla\Psi_1)\right) \cdot \nabla\tilde{\Psi} + \int_{\mathbb{R}^2} \lap \tilde{\Psi} \tilde{\Psi} \\
&\leq C ||\nabla\tilde{\Psi}||_{L^2}^2 .
\end{align*}
Since $\nabla\tilde{\Psi}|_{t=0}=0$, Gronwall's inequality shows that $\nabla\tilde{\Psi}=0$ for all time.  
For the regularity in space and time, now assume that $\Psi$ is a solution to $(\QG)$ with smooth initial data.  Using the equalities 
$$ \partial_t (\Delta\Psi) = -\grad^\perp\Psi\cdot\grad(\Delta\Psi) $$
$$ \partial_t(\dnu) = -\grad^\perp\Psi\cdot\grad(\dnu)+\lap\Psi $$
and noticing that \cref{full} gives that any spatial derivative of the right hand side in either equality is bounded, we have that $\Delta\Psi$, $\dnu$ and all their spatial derivatives are $C^1$ in time.  Differentiating the equations in time and continuing inductively finishes the proof of \cref{main}. 
\end{proof}

\section{Appendix}

We now provide proofs of \cref{duality} and \cref{inequality} from the preliminaries.  
\begin{customthm}{2.5}\label{2.5}
\begin{enumerate}
\item Suppose that $(-\lap)^{-\frac{1}{4}} w \in L^\infty(\mathbb{R}^2)$ and $z \in \Hdot^{\frac{1}{2}}\cap L^\infty(\mathbb{R}^2)$ is supported in $B_2(0)$.  Then there exists $C$ independent of $w,z$ such that $$|| wz ||_{H^{-2}(\mathbb{R}^2)} \leq C ||(-\lap)^{-\frac{1}{4}} w||_{L^\infty(\mathbb{R}^2)}  \left( ||z||_{L^\infty(\mathbb{R}^2)} + ||z||_{\Hdot^\frac{1}{2}(\mathbb{R}^2)} \right) $$
\item Suppose that $z \in L^\infty \cap \Hdot^\frac{1}{2}(\mathbb{R}^2)$.  Then there exists $C$ independent of $z$ such that $$||z \half z||_{H^{-2}(\mathbb{R}^2)} \leq C\left( ||z||_{L^\infty(\mathbb{R}^2)}||z||_{\Hdot^\frac{1}{2}(\mathbb{R}^2)}+ ||z||^2_{\Hdot^\frac{1}{2}(\mathbb{R}^2)}\right)$$.  
\end{enumerate}
\end{customthm}

\begin{proof}
\begin{enumerate}
\item Suppose that $g\in H^2(\mathbb{R}^2)$. We first show that $\quarter(zg) \in L^1(\mathbb{R}^2)$.  By the compact support of $z$, we have that
\begin{align*}
| \quarter (zg)(x) \mathcal{X}_{\{|x|>3\}}(x) | &= \bigg{|} \mathcal{X}_{\{|x|>3\}}(x) P.V. \int_{\mathbb{R}^2} \frac{z(x)g(x) - z(y)g(y)}{|x-y|^{\frac{5}{2}}} \,dy \bigg{|}\\
&=  \bigg{|} \mathcal{X}_{\{|x|>3\}}(x) P.V. \int_{B_2(0)} \frac{- z(y)g(y)}{|x-y|^{\frac{5}{2}}} \,dy \bigg{|}\\ 
&\leq C \mathcal{X}_{\{|x|>3\}}(x) \int_{B_2(0)} \frac{ ||z||_{L^\infty} ||g||_{L^\infty} }{|x|^{\frac{5}{2}}}\,dy \\
&\leq C \mathcal{X}_{\{|x|>3\}}(x) \frac{ ||z||_{L^\infty} ||g||_{L^\infty} }{|x|^{\frac{5}{2}}}
\end{align*}
Integrating in $x$ then gives that 
$$ ||\quarter (zg)(x) \mathcal{X}_{\{|x|>3\}}(x)||_{L^1(\mathbb{R}^2)} \leq C ||z||_{L^\infty}||g||_{L^\infty}  $$
In addition, it follows from H\"{o}lder's inequality and a short calculation with the Gagliardo seminorm that 
\begin{align*}
||\quarter (zg)(x) \mathcal{X}_{\{|x|\leq 3\}}(x) ||_{L^1(\mathbb{R}^2)} &\leq C ||\quarter (zg)(x) ||_{L^2(\mathbb{R}^2)}\\
&\leq C \left( ||z||_{L^\infty}||g||_{\Hdot^\frac{1}{2}} + ||g||_{L^\infty}||z||_{\Hdot^\frac{1}{2}} \right)\\
\end{align*}
Then 
\begin{align*}
\int_{\mathbb{R}^2} w z g &= \int_{\mathbb{R}^2} (-\lap)^{-\frac{1}{4}} (w) \quarter (z g) \\
&\leq ||(-\lap)^{-\frac{1}{4}} w ||_{L^\infty(\mathbb{R}^2)} ||\quarter(zg)||_{L^1(\mathbb{R}^2)}\\
&\leq C||(-\lap)^{-\frac{1}{4}} w ||_{L^\infty} \left( ||z||_{L^\infty}||g||_{\Hdot^\frac{1}{2}} + ||g||_{L^\infty}|| z||_{\Hdot^\frac{1}{2}} + ||z||_{L^\infty}||g||_{L^\infty}\right)\\
&\leq C ||(-\lap)^{-\frac{1}{4}} w||_{L^\infty}  \left( ||z||_{L^\infty} + ||z||_{\Hdot^\frac{1}{2}} \right)||g||_{H^2}\\
\end{align*}
\item Suppose again that $g\in H^2(\mathbb{R}^2)$.  Then 
\begin{align*}
\int_{\mathbb{R}^2}  \half z(x) z(x) g(x)\,dx &= \int_{\mathbb{R}^2} (-\lap)^{\frac{1}{4}} (z)(x) \quarter (z g)(x)\,dx \\
&\leq ||(-\lap)^{\frac{1}{4}} z ||_{L^2(\mathbb{R}^2)} ||\quarter(zg)||_{L^2(\mathbb{R}^2)}\\
&\leq C|| z ||_{\Hdot^{\frac{1}{2}}(\mathbb{R}^2)} \left( ||z||_{L^\infty(\mathbb{R}^2)}||g||_{\Hdot^\frac{1}{2}(\mathbb{R}^2)} + ||g||_{L^\infty(\mathbb{R}^2)}|| z||_{\Hdot^\frac{1}{2}(\mathbb{R}^2)} \right)\\
\end{align*}
and the result follows from applying Sobolev embedding to $g$.  
\end{enumerate}
\end{proof}

We prove \cref{inequality}, following the proof of Proposition 2.104 in \cite{bcd}.

\begin{customthm}{2.6}\label{2.6}
There exists a constant $C$ such that for any $h = \grad H:\mathbb{R}^2\rightarrow\mathbb{R}^2$,
$$ ||h||_{L^\infty} \leq C||H||_{L^\infty} + {C}||h||_{\Bdot_{\infty,\infty}^0}\left( 1+\log{\frac{||h||_{\Hdot^\frac{3}{2}}}{||h||_{\Bdot_{\infty,\infty}^0}}} \right).$$
\end{customthm}

\begin{proof}
Let us set $\Theta(x)= 1 - \sum_{j=0}^\infty \Phi_j(x)$ where $\Phi_j$ is the function associated to the $j^{th}$ Littlewood-Paley projection.  Notice that since $\hat{\Theta}(\xi)$ is compactly supported, we have that 
$$ ||\Theta\ast h ||_{L^\infty} = ||\Theta\ast \grad H ||_{L\infty} = || \grad\Theta \ast H ||_{L^\infty} \leq C||H||_{L^\infty} $$
In addition, we have that by the characterizations of Besov spaces and Sobolev embedding, for $\epsilon=\frac{1}{2}$,
$$ \sup_{j\geq 0} 2^{j\epsilon}||\Delta_j h||_{L^\infty} \leq C||h||_{\Cdot^\epsilon} \leq C||h||_{\Hdot^\frac{3}{2}} . $$
We therefore have that 
\begin{align*}
||h||_{L^\infty} &= \left|\left| \Theta \ast h + \sum_{j=0}^\infty \Delta_j h\right|\right|_{L^\infty}\\
       &\leq ||\Theta \ast h||_{L^\infty} + \sum_{j=0}^{N-1} ||\Delta_j h||_{L^\infty} + \sum_{j=N}^\infty 2^{j\epsilon}||\Delta_j h||_{L^\infty} 2^{-j\epsilon} \\
       &\leq C||H||_{L^\infty} + N||h||_{\Bdot_{\infty,\infty}^0} + C||h||_{\Hdot^\frac{3}{2}}\frac{2^{-(N-1)\epsilon}}{2^\epsilon-1}\\
\end{align*}
and taking 
$$ N=1+\left(\frac{1}{\epsilon}\log_2{\frac{||h||_{\Hdot^\frac{3}{2}}}{||h||_{\Bdot_{\infty,\infty}^0}}} \right) $$
 finishes the proof.
\end{proof}

Let us now use the above proof to provide a short justification of the construction of the bump functions $\gamma_k$ in \cref{fine}.  Let $\gamma_k$ be a smooth bump function compactly supported in $B_{\frac{1}{2}+2^{-k-1}}$, equal to $\frac{1}{2}+2^{-k-1}$ on $B_{\frac{1}{2}+2^{-k-2}}$, and with $||\grad \gamma_k||_{L^\infty} \leq C2^k$. It is clear that the above argument works also for $h=\half H$, and $H=\gamma_k$.  Then using that $$ ||\half \gamma_k||_{\Bdot^0_{\infty,\infty}} \leq || \grad \gamma_k ||_{\Bdot^0_{\infty,\infty}} \leq || \grad \gamma_k ||_{L^\infty} \leq C2^k$$ and $$||\half\gamma_k||_{{\Bdot^\epsilon_{\infty,\infty}}} \leq ||\grad\gamma_k||_{{\Bdot^\epsilon_{\infty,\infty}}} \leq || \grad \gamma_k ||_{C^\epsilon} \leq C2^{(1+\epsilon)k},$$
 we have 
\begin{align*}
|| \half \gamma_k ||_{L^\infty} &\leq ||\gamma_k||_{L^\infty} + C||\half \gamma_k ||_{\Bdot^0_{\infty,\infty}} \left( 1+\log{\frac{||\half\gamma_k||_{\Bdot^\epsilon_{\infty,\infty}}}{||\half \gamma_k||_{\Bdot^0_{\infty,\infty}}}} \right) \\ 
&\leq 1+ C2^k  \left( 1+ \log{C2^{k(1+\epsilon)}}\right) \\
&\leq Ck2^k . 
\end{align*}

\bibliography{Paper1}
\bibliographystyle{plain}
\nocite{bb}
\nocite{bkm}
\nocite{cv}
\nocite{cw}
\nocite{cvicol}
\nocite{dg}
\nocite{knv}
\nocite{km}
\nocite{v}
\nocite{pv}
\nocite{kn}
\nocite{cmt}
\nocite{Bourgain2000}
\nocite{bcd}
\nocite{Chemin}
\nocite{Grafakos}
\nocite{Constantin2006}
\nocite{MR1312238}
\nocite{Pedlosky1987}
\nocite{ccv}
\nocite{cc}
\nocite{w}
\nocite{cmz}
\nocite{a}
\nocite{pavlovic}
\nocite{Kato}
\nocite{cg}
\nocite{Schwab2016}
\nocite{hitchhiker}

\end{document}